\def\NN{\mathbb N}
\def\ZZ{\mathbb Z}
\def\vep{\varepsilon}
\def\tr{\hbox{tr}}
\def\ms{\medskip}
\def\beq{\begin{equation}}
\def\eeq{\end{equation}}
\newtheorem{theo}{\bf Theorem}[section]
\newtheorem{prop}{\bf Proposition}[section]
\newtheorem{defi}{\bf Definition}[section]
\newtheorem{cor}{\bf Corollary}[section]
\newtheorem{exe}{\bf Example}[section]
\newtheorem{lem}{\bf Lemma}[section]
\newcommand{\blanksquare}{\,\,\,$\sqcup\!\!\!\!\sqcap$}
\newenvironment{proof}{{\flushleft{\bf Proof: }}}{\hspace{\stretch{1}} \blanksquare\\}
\begin{document}

\title{\textbf{Periodic points of Ruelle-expanding maps}}

\author{Maria Carvalho\thanks{Partially supported by FCT through CMUP } $\,$ and M\'ario Alexandre Magalh\~aes\thanks{Supported by FCT through the PhD grant SFRH/BD/33092/2007} }

\maketitle

\begin{abstract}
We prove that, for a Ruelle-expanding map, the zeta function is rational and the topological entropy is equal to the exponential growth rate of the periodic points.
\end{abstract}

\scriptsize
\noindent\emph{MSC 2000:} primary 37H15, 37D08; secondary 47B80.\\
\emph{keywords:} Ruelle-expanding; Zeta function; Entropy.\\

\tableofcontents
\normalsize

\newpage

\section{Introduction}

Given a dynamical system with a finite number of periodic points with period $n$, for each $n\in\NN$, the (Artin-Mazur, \cite{AM}) Riemann \emph{zeta function} is a complex series that encodes all the information regarding the number of these points. More precisely, for a map $f$ with a finite number $N_n(f)$ of periodic points with period $n$, its zeta function is the formal series
\[z \in \mathbb{C} \mapsto \zeta_f(z)=\exp\left(\sum_{n=1}^\infty\frac{N_n(f)}{n}\, z^n\right).\]
If this map admits a meromorphic continuation to the whole complex plane, the poles, zeros and residues of the extended $\zeta$-function provide additional topological invariants for $f$ and an insight into the orbit structure.

It is known that $\zeta_f$ is a rational function when $f$ is a Markov subshift of finite type (unilateral or bilateral) or a $\mathcal{C}^1$ diffeomorphism on a hyperbolic set with local product structure \cite{Bo}. In this work we study another class of maps first introduced in a differentiable setting by M. Shub in \cite{Shu3} and then studied by D. Ruelle in \cite{Ru1} and \cite{Ru2}. Ruelle proposed a more general definition based on a simple metric property: a \emph{Ruelle-expanding} map is just an open continuous transformation, defined on a compact metric space, which expands distances locally (see Section~\ref{RE}). This concept includes Markov unilateral subshifts of finite type and generalizes the notion of $\mathcal{C}^1$ expanding map \cite{Shu3}, defined on manifolds, freeing its essence from the differentiability constraints. Our first result concerning this family of transformations is the following.

\begin{theo}\label{MainTheorem1}
If $f:K \rightarrow K$ is Ruelle-expanding, then $\zeta_f$ is a rational function.
\end{theo}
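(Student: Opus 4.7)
The plan is to reduce Theorem~\ref{MainTheorem1} to the (known) rationality of zeta functions of one-sided subshifts of finite type (SFT). Recall that for an SFT $(\Sigma_A,\sigma)$ with incidence matrix $A\in\{0,1\}^{m\times m}$ one has $N_n(\sigma)=\operatorname{tr}(A^n)$, so that
\[
\zeta_\sigma(z) \;=\; \exp\!\left(\sum_{n\geq 1}\frac{\operatorname{tr}(A^n)}{n}\,z^n\right) \;=\; \frac{1}{\det(I-zA)},
\]
a rational function. My strategy is to code $f$ by an SFT via a Markov partition of $K$ and then correct for the failure of the coding to be injective along rectangle boundaries.

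\emph{Step 1: Markov partition.} I would build a finite Markov partition $\mathcal{R}=\{R_1,\ldots,R_m\}$ of $K$: closed sets with pairwise disjoint interiors that cover $K$, each of diameter below the Ruelle expansion scale, and satisfying the Markov property that $f(R_i)$ is a union of $R_j$'s. In this setting there are no smooth charts nor stable/unstable manifolds, so the construction must rely purely on the local expansion constant $\lambda>1$ and the openness of $f$: starting from a finite cover of $K$ by sets of small diameter, one refines by iterated pullbacks along the local inverse branches of $f$ (each a $\lambda^{-1}$-contraction) until the Markov condition holds. This yields an incidence matrix $A$ and a continuous surjective semiconjugacy $\pi\colon\Sigma_A\to K$ with $f\circ\pi=\pi\circ\sigma$ that is finite-to-one, and injective on the $\sigma$-invariant complement of $\pi^{-1}\!\bigl(\bigcup_i\partial R_i\bigr)$.

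\emph{Step 2: Counting via inclusion--exclusion.} A periodic point of $f$ lying in the common boundary of several rectangles has correspondingly several $\sigma$-preimages, so $N_n(\sigma)$ overcounts $N_n(f)$. Following the classical argument of Bowen and Manning, one introduces a finite family of auxiliary SFTs $\sigma_1,\sigma_2,\ldots,\sigma_m$ whose alphabets are tuples of rectangle-indices with non-empty common intersection and whose transitions are inherited coordinatewise from $A$; a combinatorial inclusion--exclusion on the multiplicity with which each $f$-periodic orbit is counted by the $\sigma_k$'s yields, for every $n$, an identity of the form
\[
N_n(f) \;=\; \sum_{k=1}^{m}(-1)^{k+1}\,N_n(\sigma_k).
\]
Substituting into the defining series of $\zeta_f$ gives
\[
\zeta_f(z) \;=\; \prod_{k=1}^{m}\zeta_{\sigma_k}(z)^{(-1)^{k+1}} \;=\; \prod_{k=1}^{m}\det(I-zA_k)^{(-1)^{k}},
\]
a finite product of polynomials and reciprocals of polynomials, hence rational.

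The technical heart of the argument, and the step I expect to be the main obstacle, is Step~1: producing a genuine Markov partition for a Ruelle-expanding map from the metric hypotheses alone, with tight enough control of the rectangle boundaries for the auxiliary shifts $\sigma_k$ of Step~2 to be of finite type and for the combinatorial identity to be valid. Once the partition and its associated sub-SFTs are in place, the reduction to determinants of finite matrices is essentially formal.
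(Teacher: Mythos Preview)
Your overall architecture---Markov partition for $f$, semiconjugacy $\Pi\colon\Sigma_A^+\to K$, then a Manning-style correction for overcounting on boundaries---is exactly the route the paper takes. Step~1 is indeed the technical core; the paper builds the partition not by naive pullback-refinement but via a Shadowing Lemma: a fine net $\{p_1,\dots,p_k\}$ gives an SFT whose $\alpha$-pseudo-orbits $\beta$-shadow unique genuine orbits, producing a preliminary semiconjugacy $\theta$, and the Markov rectangles are then obtained by intersecting the images $T_i=\theta(\{a_0=i\})$ and their complements. You should expect to need shadowing rather than a purely iterative refinement.

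There is, however, a genuine gap in your Step~2. The identity
\[
N_n(f)\;=\;\sum_{k=1}^{m}(-1)^{k+1}\,N_n(\sigma_k)
\]
with the $\sigma_k$ ordinary $\{0,1\}$-SFTs on unordered $k$-tuples is \emph{false} in general. The reason is that if $x\in\mathrm{Per}_p(f)$ has $\Pi$-preimages $\alpha^1,\dots,\alpha^r$, then $\sigma^p$ need only permute this set; it need not fix each $\alpha^i$. A $p$-periodic point of your $\sigma_k$ corresponds to a $\sigma^p$-invariant $k$-subset $J\subseteq\{\alpha^1,\dots,\alpha^r\}$, and the alternating sum over such $J$ equals $1-\prod_j\bigl(1+(-1)^{\ell_j}\bigr)$, where $\ell_1,\dots,\ell_s$ are the cycle lengths of the permutation; this is $1$ only when some $\ell_j$ is odd, and fails (giving $1-2^s$) whenever all cycles have even length. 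Concretely, a fixed point of $f$ lying on $\partial R_1\cap\partial R_2$ with $A_{12}=A_{21}=1$ can have exactly the two $\Pi$-preimages $(1,2,1,2,\dots)$ and $(2,1,2,1,\dots)$, swapped by $\sigma$; your formula then contributes $0-1=-1$ to $N_1(f)$ from this point, not $+1$.

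The fix, which is what the paper does, is to record the \emph{sign} of the transition permutation: one replaces the $\{0,1\}$ matrix $A^{(r)}$ on $I_r$ by a matrix $B^{(r)}$ with entries in $\{-1,0,1\}$, where $B^{(r)}_{st}=\mathrm{sgn}(\mu)$ for the unique bijection $\mu$ realising the transition (and $0$ if no or several such bijections exist). The correct counting identity is
\[
N_n(f)\;=\;\sum_{r=1}^{L}(-1)^{r-1}\,\operatorname{tr}\!\bigl((B^{(r)})^n\bigr),
\]
because the weight $(-1)^{|J|-1}\mathrm{sgn}(\nu)$ attached to each invariant subset $J$ collapses, via the cycle decomposition, to $-(-1)^{|B|}$ summed over nonempty subsets $B\subseteq[s]$ of cycles, which is always $1$. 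One then gets
\[
\zeta_f(z)\;=\;\frac{\prod_{r\ \mathrm{even}}\det(I-zB^{(r)})}{\prod_{r\ \mathrm{odd}}\det(I-zB^{(r)})},
\]
still rational, but not literally a product of SFT zeta functions. So your reduction ``to determinants of finite matrices'' is right in spirit, but the matrices must be signed; without that, the inclusion--exclusion does not close.
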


The proof, in Section~\ref{Rational}, relies on the existence of finite covers of $K$ with arbitrarily small diameter and exhibiting properties that resemble the Markov partitions used to prove the analogous result in the hyperbolic setting \cite{M}. In particular, we will establish a Shadowing Lemma, that enable us to detect periodic orbits, and construct a semiconjugacy between $f$ and an adequate Markov subshift of finite type that will suggest how to count the periodic points of $f$ with given period.

When $f$ is either a C-dense Axiom A diffeomorphism \cite{Bo2}, a piecewise monotone mapping of an interval with positive entropy \cite{MS} or a Markov subshift of finite type determined by an irreducible matrix \cite{Wa}, the topological entropy, say $h(f)$, is given by
$$h(f)=\lim_{n \rightarrow +\infty}\,\,\frac{1}{n}\, \log N_n(f)$$
and so
$$h(f)=-\log\rho$$
where $\rho$ is the radius of convergence of $\zeta_f$. We will also generalize this equality to the Ruelle-expanding setting. It is known \cite{Ru1} that, for a Ruelle-expanding $f$ defined on a compact metric space $(K,d)$, there is a (unique) finite family of compact disjoint subsets (called basic components)
$$\displaystyle \left(\Lambda_i^{(m)}\right)_{i \in \{1, \ldots, n_m\}; \, \, m \in \{1, \ldots, M\}}$$
such that

\begin{itemize}
\item[(C1)] $f(\Lambda_i^{(m)})=\Lambda_{i+1}^{(m)}$ for all $i \in \{1, \ldots, n_m-1\}$ and $m \in \{1, \ldots, M\}$.
\item[(C2)] $f(\Lambda_{n_m}^{(m)})=\Lambda_1^{(m)}$ for all $m \in \{1, \ldots, M\}$.
\item[(C3)] $\bigcup_{i,m}\,\,\Lambda_i^{(m)}=\overline{Per(f)}$.
\item[(C4)] $f^{n_m}|_{\Lambda_i^{(m)}}$ is Ruelle-expanding.
\item[(C5)] For any open nonempty subset $V$ of $\Lambda_i^{(m)}$ there is $N\in \mathbb{N}$ such that $(f^{n_m})^N(V)=\Lambda_i^{(m)}$.
\end{itemize}

\noindent For instance, if $K$ is connected, then $K=\overline{Per(f)}$ and it is equal to one of the basic components, where $f$ is topologically mixing. The topological entropy of the restriction of $f$ to each basic $m$-cycle, say $\left(\Lambda_i^{(m)}\right)_{i \in \{1, \ldots, n_m\}}$, is equal to $\frac{1}{n_m} \, h(f^{n_m})$. Therefore, to relate the entropy with the growth of the periodic points, it is enough to prove that:

\begin{theo}\label{MainTheorem2}
If $f:K \rightarrow K$ is Ruelle-expanding and $K$ is a basic component, then
$$h(f)=\lim_{n \rightarrow +\infty}\,\,\frac{1}{n}\, \log N_n(f).$$
\end{theo}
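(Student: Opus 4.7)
The plan is to bound $\limsup_n \tfrac{1}{n}\log N_n(f)$ and $\liminf_n \tfrac{1}{n}\log N_n(f)$ separately, matching both to $h(f)$. Since every Ruelle-expanding map is expansive, fix an expansivity constant $\vep_0>0$. Two distinct fixed points $x\neq y$ of $f^n$ cannot satisfy $d(f^k x, f^k y)<\vep_0$ for every $k\ge 0$ (equivalently, by periodicity, for every $0\le k<n$), so the set $\text{Fix}(f^n)$ is $(n,\vep_0)$-separated in the Bowen metric. Hence $N_n(f)\le s(n,\vep_0)$, where $s(n,\vep)$ denotes the maximum cardinality of an $(n,\vep)$-separated set, and the standard definition of topological entropy gives
\beq
\limsup_n \tfrac{1}{n}\log N_n(f) \;\le\; h(f).
\eeq

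For the reverse inequality I would recycle the symbolic model used to prove Theorem~\ref{MainTheorem1}: a Markov-like finite cover $\mathcal{U}=\{U_1,\ldots,U_s\}$ of $K$ of diameter less than $\vep_0$, the induced subshift of finite type $(\Sigma_A,\sigma_A)$ with transition matrix $A$, and the continuous factor map $\pi:\Sigma_A\to K$. Two observations are central. First, because $K$ is a basic component, the transitivity built into (C5) translates, through the coding, into irreducibility of $A$; hence $N_n(\sigma_A)=\tr(A^n)$, and Perron--Frobenius gives $\tfrac{1}{n}\log N_n(\sigma_A)\to \log\rho(A)=h(\sigma_A)$. Second, since the diameter of $\mathcal{U}$ is below the expansivity constant of $f$, $\mathcal{U}$ is a generator, so $h(f,\mathcal{U})=h(f)$; the inequality $h(f,\mathcal{U})\le h(\sigma_A)$ is automatic from the itinerary construction, and $h(\sigma_A)\ge h(f)$ is the standard factor-map bound. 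Combining, $h(f)=h(\sigma_A)=\log\rho(A)$.

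It then remains to transfer the count of periodic points across $\pi$. By the Shadowing Lemma established in Section~\ref{Rational}, each $\sigma_A$-periodic sequence of period $n$ is sent by $\pi$ to a point of $\text{Fix}(f^n)$, and $\pi$ is at most $M$-to-one, where $M$ depends only on the multiplicity of $\mathcal{U}$. Consequently $N_n(\sigma_A)\le M\cdot N_n(f)$, so
\beq
\liminf_n \tfrac{1}{n}\log N_n(f) \;\ge\; \lim_n \tfrac{1}{n}\log N_n(\sigma_A) \;=\; h(f),
\eeq
which with the upper bound yields the desired equality. The main obstacle I anticipate is not the entropy computation, which is standard once $\mathcal{U}$ is chosen fine enough, but the verification that the coding inherits the two qualitative properties used here: that (C5) forces irreducibility of $A$, and that a period-$n$ sequence in $\Sigma_A$ really is shadowed by a fixed point of $f^n$ captured by $\pi$. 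Both should follow from the Markov-like structure of $\mathcal{U}$ and the expansivity of $f$, but they must be checked against the precise construction carried out in Section~\ref{Rational}.
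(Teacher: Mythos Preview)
Your route is sound and genuinely different from the paper's. For the lower bound the paper does not pass through the symbolic model at all: it works directly in $K$, combining the mixing property (C5) with the Shadowing Lemma (Corollary~\ref{PER}) to show that every dynamical ball $B(n-N_\delta,2\tau,x)$ contains a point of $Per_n(f)$ (Lemma~\ref{EXISTPER}), whence $N_n(f)\ge s_{n-N_\delta}(4\tau,K)$; a submultiplicativity estimate for separated sets (Lemma~\ref{sum}) then absorbs the time-shift $N_\delta$ into a multiplicative constant, yielding Proposition~\ref{PERSEP}. Your approach trades this direct specification-type argument for the bounded-to-one coding already built in Section~\ref{Rational} (Proposition~\ref{M19}), which is more conceptual---it reduces everything to the subshift case---but forces you to check that the coding inherits the right combinatorics. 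The paper's argument is more self-contained; yours recycles more machinery.

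Two points to tighten. First, irreducibility of $A$ is not enough for the limit $\tfrac{1}{n}\log N_n(\sigma_A)\to\log\rho(A)$ to exist (an irreducible periodic matrix has $\tr(A^n)=0$ along an arithmetic progression); you need primitivity. This does follow from (C5) together with $f(K)=K$: for each $i$ one gets $f^m(\stackrel{\circ}{R_i})=K$ for all large $m$, hence $(A^N)_{ij}>0$ for a common $N$. Second, your sentence ``$h(f,\mathcal U)\le h(\sigma_A)$ \ldots and $h(\sigma_A)\ge h(f)$ is the standard factor-map bound; combining, $h(f)=h(\sigma_A)$'' asserts the same inequality twice and does not give equality. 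Fortunately you only need $h(\sigma_A)\ge h(f)$ for the final chain $\liminf\tfrac{1}{n}\log N_n(f)\ge h(\sigma_A)\ge h(f)$, so drop the claimed equality (it follows \emph{a posteriori} once both bounds are in place).
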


As we will see, Ruelle-expanding maps are expansive, and so the explicit computation of the topological entropy is possible using either a generator with small enough diameter or separated sets determined by an expansivity constant of $f$ \cite{Wa}. The mixing property of $f$, assisted by a Shadowing Lemma, will provide a method to detect periodic orbits and compare its number, for large enough periods, with the cardinal of maximal separated sets. In this way, the proof of Theorem~\ref{MainTheorem2}, in Section~\ref{Entropy}, will conclude that $h(f)=-\log\rho$ and that $\lim_{n \rightarrow +\infty}\,\,\frac{1}{n}\, \log N_n(f)$ exists.

\section{Basic definitions}

\subsection{Shift}

Let $k$ be a natural number and $[k]$ denote the set $\left\{1,2,\ldots,k\right\}$ with the discrete topology. Let $\Sigma(k)$ be the product space $[k]^\ZZ$, whose elements are the sequences $\underline{a}=(\ldots,a_{-1},a_0,a_1,\ldots)$, with $a_n\in [k]$ forall $n\in\ZZ$. This space is endowed with the product topology, which is given by the metric
\[d(\underline{a},\underline{b})=\sum_{n=-\infty}^{\infty}\frac{\delta_n(\underline{a},\underline{b})}{2^{2\left|n\right|}}\]
where $\delta_n(\underline{a},\underline{b})$ is $0$ when $a_n=b_n$ and $1$ otherwise. The {\it shift} is a homeomorphism of $\Sigma(k)$ defined by
\[(\sigma(\underline{a}))_i=a_{i+1},\,\,\,i\in\ZZ\]
and has a special class of closed invariant sets: if $M_k$ is the set of $k\times k$ matrices with entries 0 or 1, for each $A\in M_k$, the set $$\Sigma_A=\{\underline{a}\in\Sigma(k):A_{a_i a_{i+1}}=1,\forall i\in\ZZ\}$$
is a closed invariant subspace of $\Sigma(k)$.

\begin{defi}\label{Shift}
The pair $(\Sigma_A,\sigma_A)$, where $\sigma_A=\sigma|_{\Sigma_A}$, is called a {\it subshift of finite type}.
\end{defi}


\subsection{Topological entropy}

Let $(X,d)$ be a metric space and $f:X\rightarrow X$ a uniformly continuous map. For every $n\in\NN$, define a new metric $d_n$ on $X$ by
\[d_n(x,y)=\max\{d(f^i(x),f^i(y)),i\in\{0,1,\ldots,n-1\}\}.\]
Let $B_{\delta}(x)$ and $\overline{B}_{\delta}(x)$ denote, respectively, the open and the closed ball centered at $x$ with radius $\delta$ in the metric $d$. The open ball centered at $x$ with radius $r$ in the metric $d_n$ is
\begin{eqnarray*}
B(n-1,r,x)&=&\{y\in K : d(f^j(x),f^j(y))< r,\forall j\in\{0,\ldots,n-1\}\} \\
&=&\bigcap_{i=0}^{n-1}f^{-i}(B_r(f^i(x)))
\end{eqnarray*}
while the closed ball is
\begin{eqnarray*}
\overline{B}(n-1,r,x)&=&\{y\in K : d(f^j(x),f^j(y))\leq r,\forall j\in\{0,\ldots,n-1\}\} \\
&=&\bigcap_{i=0}^{n-1}f^{-i}(\overline{B}_r(f^i(x))).
\end{eqnarray*}

\begin{defi}
Let $n\in\NN$, $\vep>0$ and $K$ be a compact subset of $X$. Given a subset $F$ of $X$, we say that $F$ $(n,\vep)-${\it spans $K$ with respect to $f$} if
\[\forall x\in K \,\,\exists y\in F:\,\,d_n(x,y)\leq\vep\]
or, equivalently,
\[K\subseteq\bigcup_{y\in F}\overline{B}(n-1,\vep,y).\]
\end{defi}

\begin{defi}
Let $n\in\NN$, $\vep>0$ and $K$ be a compact subset of $X$. Denote by $r_n(\vep,K)$ the smallest cardinality among all the $(n,\vep)$-spanning sets for $K$ with respect to $f$.
\end{defi}

Since $K$ is compact, we have $r_n(\vep,K)<\infty.$ Moreover,
$$\vep_1<\vep_2\Longrightarrow r_n(\vep_1,K)\geq r_n(\vep_2,K).$$

\begin{defi}
Let $\vep>0$ and $K$ be a compact subset of $X$. Define
\[r(\vep,K)=r(\vep,K,f)=\limsup_{n\rightarrow\infty}(1/n)\log r_n(\vep,K).\]
\end{defi}

\begin{defi}
Let $K$ be a compact subset of $X$. Define
$$h(f,K)=\lim_{\vep\rightarrow 0} r(\vep,K,f)$$
and the \textit{topological entropy} of $f$ as
$$h(f)=\sup\{h(f,K): K \text{ is a compact subset of } X\}.$$
\end{defi}

We will use an equivalent way of defining topological entropy which considers \emph{separated sets} instead of spanning sets.

\begin{defi}
Let $n\in\NN$, $\vep>0$ and $K$ be a compact subset of $X$. Given a subset $E$ of $K$, we say that \textit{$E$ is $(n,\vep)$-\textit{separated with respect to} $f$} if
\[\forall x,y\in E \,\,\, d_n(x,y)\leq\vep \,\,\Longrightarrow \,\,x=y\]
or, equivalently,
\[\forall x\in E \,\,\, \overline{B}(n-1,\vep,x)\cap E=\{x\}.\]
\end{defi}

\begin{defi}
Let $n\in\NN$, $\vep>0$ and $K$ be a compact subset of $X$. Denote by $s_n(\vep,K)$ the largest cardinality among all $(n,\vep)$-separated sets for $K$ with respect to $f$.
\end{defi}

We remark that $r_n(\vep,K) \leq s_n(\vep,K) \leq r_n(\vep/2,K).$ Moreover, since $r_n(\vep/2,K)<\infty$, we have $s_n(\vep,K)<\infty.$ Also,
$$\vep_1<\vep_2\Longrightarrow s_n(\vep_1,K)\geq s_n(\vep_2,K).$$

\begin{defi}
Let $\vep>0$ and $K$ be a compact subset of $X$. Define
\[s(\vep,K)=s(\vep,K,f)=\limsup_{n\rightarrow\infty}(1/n)\log s_n(\vep,K).\]
\end{defi}

Notice that $r(\vep,K) \leq s(\vep,K) \leq r(\vep/2,K).$ So
$$h(f,K)=\lim_{\vep\rightarrow 0} r(\vep,K)=\lim_{\vep\rightarrow 0} s(\vep,K)$$
and the topological entropy of $f$ may be estimated as
\[h(f)=\sup_K h(f,K)=\sup_K \lim_{\vep\rightarrow 0} s(\vep,K,f)\]
\noindent where $K$ is any compact subset of $X$. When $X$ is compact, this computation may be simplified.

\begin{prop}[\cite{Wa}]\label{M1}
If $(X,d)$ is a compact metric space and $f:X\rightarrow X$ is a continuous map, then
$$h(f)=h(f,X)=\lim_{\vep\rightarrow 0}\limsup(1/n)\log r_n(\vep,X)=\lim_{\vep\rightarrow 0}\limsup(1/n)\log s_n(\vep,X).$$
\end{prop}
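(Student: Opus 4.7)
The plan is to observe that, when $X$ is compact, $X$ itself is the largest candidate for $K$ in the supremum defining $h(f)$, and that the quantities $r_n(\vep,\cdot)$ and $s_n(\vep,\cdot)$ are monotone in the underlying set: enlarging the compact set can only make the minimal spanning and maximal separated cardinalities larger.

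More precisely, I would first check that for any compact $K\subseteq X$ one has $r_n(\vep,K)\leq r_n(\vep,X)$ and $s_n(\vep,K)\leq s_n(\vep,X)$. The first inequality holds because any $(n,\vep)$-spanning set for $X$ is automatically an $(n,\vep)$-spanning set for $K$: the definition of spanning does not require the spanning set to be contained in $K$, so any minimal spanning set for $X$ serves as a (not necessarily minimal) spanning set for $K$. The second inequality is even more direct, since any $(n,\vep)$-separated subset of $K$ is a fortiori an $(n,\vep)$-separated subset of $X$. Taking $\limsup_{n\to\infty}(1/n)\log$ and then $\lim_{\vep\to 0}$ in these inequalities gives $h(f,K)\leq h(f,X)$ for every compact $K\subseteq X$. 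Since $X$ is itself compact, $X$ belongs to the family over which the supremum defining $h(f)$ is taken, and the inequality just obtained shows that this supremum is attained at $K=X$. Hence $h(f)=h(f,X)$.

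The remaining two equalities in the statement are then immediate from the definitions already laid out: by definition $h(f,X)=\lim_{\vep\to 0}r(\vep,X)$ and $h(f,X)=\lim_{\vep\to 0}s(\vep,X)$, and $r(\vep,X)$, $s(\vep,X)$ are themselves set equal to $\limsup_{n\to\infty}(1/n)\log r_n(\vep,X)$ and $\limsup_{n\to\infty}(1/n)\log s_n(\vep,X)$ respectively. The agreement of these two $\lim\limsup$ expressions is guaranteed by the sandwich $r_n(\vep,X)\leq s_n(\vep,X)\leq r_n(\vep/2,X)$ already noted in the excerpt.

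No substantive obstacle arises: the proposition is essentially a bookkeeping consequence of the definitions once compactness of $X$ is used to place $X$ itself inside the family of compact sets over which the supremum is taken. The one point requiring a moment's attention is the observation that spanning sets for $X$ automatically span any compact subset; the separated-set inequality and the passage to the limits are then routine.
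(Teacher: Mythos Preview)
Your argument is correct. The paper does not supply its own proof of this proposition; it simply cites Walters~\cite{Wa}. Your reasoning---monotonicity of $r_n(\vep,\cdot)$ and $s_n(\vep,\cdot)$ in the compact set, together with the fact that $X$ itself is compact and hence admissible in the supremum---is exactly the standard bookkeeping argument one finds in Walters, so there is nothing to compare.
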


\begin{exe}
\em A matrix $A\in M_k$ is said to be {\it irreducible} if
$$\forall i,j\in[k] \,\,\exists \,\,n\in\NN:(A^n)_{i j}>0.$$
In this case, by Perron-Frobenius Theorem, we know that $A$ has a non-negative simple eigenvalue $\lambda$ which is greater than the absolute value of all the other eigenvalues, so
$$max_{i\in[k]}\left|\lambda_i\right|=\lambda,$$
where $\lambda_1,\lambda_2,\ldots,\lambda_k$ are all the eigenvalues of $A$.

\ms

\begin{prop}[\cite{Wa}]\label{M2}
The entropy of the subshift of finite type $\sigma_A:\Sigma_A\rightarrow\Sigma_A$ associated to an irreducible matrix $A$ is $\log\lambda$, where $\lambda$ is the largest positive eigenvalue of A. In particular, the entropy of $\sigma:\Sigma(k)\rightarrow\Sigma(k)$ is $\log k$.
\end{prop}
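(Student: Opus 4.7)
\textbf{Proof plan for Proposition \ref{M2}.} The strategy is to compute $s_n(\varepsilon,\Sigma_A)$ (up to multiplicative constants) by counting admissible words and then appeal to Perron--Frobenius. The basic observation is that the number of admissible $m$-words, i.e.\ tuples $(a_0,a_1,\ldots,a_{m-1})$ with $A_{a_i a_{i+1}}=1$ for all $i$, equals $\mathbf{1}^{T}A^{m-1}\mathbf{1}=\sum_{i,j\in[k]}(A^{m-1})_{ij}$. So the whole problem reduces to identifying $(n,\varepsilon)$-separated sets in $\Sigma_A$ with sets of admissible words of the appropriate length.

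To make this identification precise, the first step is to analyse the metric. A direct geometric-series estimate on $d(\underline{a},\underline{b})=\sum_{n}\delta_n(\underline{a},\underline{b})/2^{2|n|}$ yields an integer $N=N(\varepsilon)$ such that (i) $d(\underline{a},\underline{b})\le\varepsilon$ forces $a_i=b_i$ for all $|i|\le N$, and (ii) two sequences that agree on $|i|\le N+1$ already satisfy $d(\underline{a},\underline{b})<\varepsilon$. Unwinding the definition of $d_n$ for the shift, this implies that $\underline{a}$ and $\underline{b}$ are $(n,\varepsilon)$-separated if and only if their central blocks $(a_{-N},\ldots,a_{n-1+N})$ and $(b_{-N},\ldots,b_{n-1+N})$ differ somewhere. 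Consequently, if $W_{m}$ denotes the number of admissible $m$-words, then
\[
W_{n+2N}\;\ge\; s_n(\varepsilon,\Sigma_A)\;\ge\; W_{n+2N+2}\big/C,
\]
where the lower bound is produced by picking, for each admissible word of length $n+2N+2$, a bi-infinite extension in $\Sigma_A$ (such an extension exists by irreducibility of $A$, which lets us prolong any admissible word to the left and to the right).

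The final step is Perron--Frobenius. Since $A$ is irreducible, there exist strictly positive left and right eigenvectors $u,v$ for $\lambda=\max_i|\lambda_i|$, and using the Ces\`aro-averaged Perron--Frobenius estimate one obtains constants $0<c_1\le c_2$ with $c_1\lambda^m\le \mathbf{1}^{T}A^m\mathbf{1}\le c_2\lambda^m$ for all $m\in\NN$ (only the averaged version is needed if $A$ has a nontrivial period, but the bound is still two-sided). Substituting into the double inequality above and taking $\frac{1}{n}\log(\cdot)$, the correction terms coming from the constants and from the shift by $2N$ wash out in the limit, yielding
\[
\limsup_{n\to\infty}\frac{1}{n}\log s_n(\varepsilon,\Sigma_A)=\log\lambda,
\]
independently of $\varepsilon$ (for $\varepsilon$ small). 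By Proposition~\ref{M1} this limit equals $h(\sigma_A)$, proving the first assertion. The second follows by specialising to $A$ equal to the all-ones $k\times k$ matrix, whose Perron eigenvalue is $k$.

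The main obstacle I anticipate is not the counting but the Perron--Frobenius step in the irreducible but periodic case: one must be careful that $(\mathbf{1}^{T}A^{m}\mathbf{1})^{1/m}$ still converges to $\lambda$ even when the individual entries oscillate, which is handled either by working with the period-$p$ block decomposition or by invoking Gelfand's spectral-radius formula together with the two-sided bound $\|A^m\|_{\text{op}}\le \mathbf{1}^{T}A^m\mathbf{1}\le k^{2}\|A^m\|_{\text{op}}$.
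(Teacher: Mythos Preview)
The paper does not supply its own proof of this proposition; it is simply quoted from Walters~\cite{Wa}. Your plan is the standard argument and is correct in outline: identify $(n,\varepsilon)$-separated subsets of $\Sigma_A$ with collections of distinct admissible words of length $n+O_\varepsilon(1)$, count such words as $\mathbf{1}^{T}A^{m-1}\mathbf{1}$, and invoke Perron--Frobenius (or the spectral-radius formula, in the periodic case) to extract the growth rate $\lambda$.

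One correction is worth recording. The displayed sandwich is reversed, and the ``if and only if'' you claim does not quite hold with a single window length. From your property~(i), two points whose $(n+2N)$-blocks $(a_{-N},\ldots,a_{n-1+N})$ differ are $(n,\varepsilon)$-separated; extending each admissible $(n+2N)$-word to a point of $\Sigma_A$ (possible by irreducibility) therefore yields $s_n(\varepsilon,\Sigma_A)\ge W_{n+2N}$. From your property~(ii), two $(n,\varepsilon)$-separated points must already differ somewhere in the longer window $(a_{-N-1},\ldots,a_{n+N})$, so the projection to $(n+2N+2)$-blocks is injective on any separated set and $s_n(\varepsilon,\Sigma_A)\le W_{n+2N+2}$. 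The correct chain is thus
\[
W_{n+2N}\;\le\; s_n(\varepsilon,\Sigma_A)\;\le\; W_{n+2N+2},
\]
the reverse of what you wrote. This does not affect the conclusion, since both bounds grow like $\lambda^{n}$ and the $\frac{1}{n}\log$ limit is the same. Your handling of the periodic-irreducible case via Gelfand's formula is fine.
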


\end{exe}

Instead of the previous definition of entropy, we could have used open covers. If $X$ is a compact topological space, $f:X\rightarrow X$ a continuous map and $\mathcal{A}$ a finite open cover of $X$, then the entropy of $f$ relative to $\mathcal{A}$ is given by the limit
$$h(f,\mathcal{A})=\lim_{n \rightarrow + \infty}\, \frac{1}{n}\,\log \left(H(\bigvee_{i=0}^{n-1}\,f^{-i}\, \mathcal{A})\right)$$
where $H(\bigvee_{i=0}^{n-1}\,f^{-i}\, \mathcal{A})$ is the number of sets in a finite subcover of $\bigvee_{i=0}^{n-1}\,f^{-i}\, \mathcal{A}$ with smallest cardinality. The topological entropy is then given by
$$h(f)= \sup_{\mathcal{A}}\,\,h(f,\mathcal{A}).$$
The equality between the two ways of defining topological entropy is due to the fact, proved in \cite{Wa}, that

\begin{prop}\label{TOPENTR1and2}
Let $f:X\rightarrow X$ be a continuous map of a compact metric space $(X,d)$. Given $\epsilon>0$ and the covers $\mathcal{B}$ and $\mathcal{C}$ of $X$ by open balls of radius $2\epsilon$ and $\frac{\epsilon}{2}$, respectively, then
$$H(\bigvee_{i=0}^{n-1}\,f^{-i}\, \mathcal{B})\leq r_n(\epsilon,X)\leq s_n(\epsilon,X)\leq H(\bigvee_{i=0}^{n-1}\,f^{-i}\, \mathcal{C}).$$
\end{prop}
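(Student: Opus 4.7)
The plan is to split the chain into three inequalities and treat them in the order middle, right, left.

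The middle inequality $r_n(\epsilon,X)\leq s_n(\epsilon,X)$ is the standard observation that any maximal $(n,\epsilon)$-separated set $E$ must also be $(n,\epsilon)$-spanning: were some $x\in X$ not spanned, one could adjoin it to $E$ without destroying separation, contradicting maximality. Hence the minimum spanning cardinality is bounded by the maximum separated cardinality.

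For the right inequality, I would fix a subcover $\mathcal{V}$ of $\bigvee_{i=0}^{n-1}f^{-i}\mathcal{C}$ of minimum cardinality $H(\bigvee_{i=0}^{n-1}f^{-i}\mathcal{C})$. Each element of $\mathcal{V}$ has the form $\bigcap_{i=0}^{n-1}f^{-i}(C_i)$ with $C_i\in\mathcal{C}$ an open ball of radius $\epsilon/2$, so any two of its points lie within $d_n$-distance strictly less than $\epsilon$. Since the defining condition of an $(n,\epsilon)$-separated set forbids distinct points at $d_n$-distance $\leq\epsilon$, each element of $\mathcal{V}$ contains at most one point of any such separated set, yielding $s_n(\epsilon,X)\leq|\mathcal{V}|$.

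The left inequality is the delicate one. Starting from a minimum $(n,\epsilon)$-spanning set $F$, so that $X=\bigcup_{y\in F}\overline{B}(n-1,\epsilon,y)$ with $\overline{B}(n-1,\epsilon,y)=\bigcap_{i=0}^{n-1}f^{-i}(\overline{B}_\epsilon(f^i(y)))$, I would exhibit for each $y\in F$ an element $U_y=\bigcap_{i=0}^{n-1}f^{-i}(B^{(i)}_y)$ of $\bigvee_{i=0}^{n-1}f^{-i}\mathcal{B}$ containing $\overline{B}(n-1,\epsilon,y)$; the family $\{U_y:y\in F\}$ would then be a subcover of cardinality at most $|F|=r_n(\epsilon,X)$. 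The main obstacle is the per-coordinate choice: one must find a ball $B^{(i)}_y\in\mathcal{B}$ (of radius $2\epsilon$) containing the closed $\epsilon$-ball $\overline{B}_\epsilon(f^i(y))$, which demands the center of $B^{(i)}_y$ to lie within $\epsilon$ of $f^i(y)$ rather than merely within $2\epsilon$. The factor-of-two gap between the two radii is exactly what makes this possible, either by invoking the Lebesgue number of $\mathcal{B}$ (positive thanks to compactness of $X$) or, most cleanly, by interpreting $\mathcal{B}$ as the canonical open cover $\{B_{2\epsilon}(x):x\in X\}$, in which case $B^{(i)}_y=B_{2\epsilon}(f^i(y))$ trivially contains $\overline{B}_\epsilon(f^i(y))$.
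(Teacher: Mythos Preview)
The paper does not supply its own proof of this proposition; it simply cites Walters \cite{Wa}. Your argument is precisely the standard one found there: the middle inequality via maximality of separated sets, the right inequality by noting each element of a minimal subcover of $\bigvee_{i=0}^{n-1}f^{-i}\mathcal{C}$ has $d_n$-diameter below $\epsilon$, and the left inequality by enclosing each closed $d_n$-ball $\overline{B}(n-1,\epsilon,y)$ in an element $\bigcap_{i=0}^{n-1}f^{-i}(B_{2\epsilon}(f^i(y)))$ of $\bigvee_{i=0}^{n-1}f^{-i}\mathcal{B}$. Your care in reading $\mathcal{B}$ as the canonical cover $\{B_{2\epsilon}(x):x\in X\}$ is exactly right and matches how the paper uses the proposition later; under that reading every step goes through without further hypotheses.
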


\section{The zeta function}

Given a dynamical system $f$, let $N_n(f)$ be the total number of points for which $n$ is a period (not necessarily the smallest possible period), that is to say, the number of points $x$ for which $f^n(x)=x$, which we assume to be finite for all $n\in\NN$. The most natural measure of the asymptotic growth of these topological invariants is the exponential growth rate $\wp(f)$ (also called \emph{periodic entropy} of $f$) given by
\[\wp(f)=\limsup_{n\rightarrow\infty} \, \frac{\log(\max\{N_n(f),1\})}{n}.\]

One may join all the information given by the sequence $\left(N_n(f)\right)_{n\in\NN}$ in a single power series, the $\zeta$-function of $f$:
\[\zeta_f(z)=\exp\left(\sum_{n=1}^\infty \frac{N_n(f)}{n}z^n\right)\]
where $z$ is a complex number. Notice that, since the exponential is an entire function, the radius of convergence of $\zeta_f$ is
\[\rho=\frac{1}{\limsup\sqrt[n]{\frac{N_n(f)}{n}}}=\frac{1}{\limsup\sqrt[n]{N_n(f)}}.\]
If $f$ has no periodic points, then $\zeta_f=1$ and $\rho=\infty$. Otherwise, if $f$ has at least one periodic point, then
\[\exp(\wp(f))=\limsup_{n\rightarrow\infty}\sqrt[n]{\max\{N_n(f),1\}}=\limsup_{n\rightarrow\infty}\sqrt[n]{N_n(f)}=\frac{1}{\rho}\]
that is,
$$\rho=\exp(-\wp(f)).$$
If $\wp(f)<\infty$, that is to say, if the growth rate of the number of periodic points with the period is at most exponential, then this series has a positive radius of convergence. In fact, it converges for $|z|<\exp(-\wp(f))$ and always has singularities on the circle $|z|=\exp(-\wp(f))$.

\begin{exe}
\em If $f$ has only one periodic orbit, with period $p$, then
\begin{eqnarray*}
\zeta_f(z)&=&\exp \left(\sum_{n=1}^\infty \frac{N_n(f)}{n}z^n\right) \\
&=&\exp\left(z^p+\frac{z^{2p}}{2}+\cdots+\frac{z^{np}}{n}+\cdots\right) \\
&=&\exp \left(- \log(1-z^p)\right) \\
&=&\frac{1}{1-z^p}
\end{eqnarray*}
with radius of convergence equal to $1$. In general, expressing the set of periodic points of $f$ as a disjoint union of finite orbits $\mathcal{O}$ with periods $\mathcal{P}(\mathcal{O})$, we have
$$\zeta_f(z) = \prod_{\mathcal{O}} \,\left(1+z^{\mathcal{P}(\mathcal{O})}+z^{2\mathcal{P}(\mathcal{O})}+\cdots\right).$$
Thus the zeta function of $f$, if defined, is a formal power series with nonnegative integer coefficients.
\end{exe}

\begin{exe}
\em If there is a positive integer $\alpha$ such that, for all $n$, we have $N_n(f)=\alpha^n$, then
$$\zeta_f(z)=\exp\left(\sum_{n=1}^\infty\frac{(\alpha z)^n}{n}\right)=\exp\left(-\log(1-\alpha z)\right)=\frac{1}{1-\alpha z}$$
with radius of convergence equal to $\displaystyle \frac{1}{\alpha}$.
\end{exe}

In some cases, the series $\zeta_f$ actually represents a rational function of $z$, so the information it contains may be replaced by a finite set of numbers: the coefficients when $\zeta_f$ is written as a rational map. For instance, this happens when $f=\sigma_A$ (see Definition~\ref{Shift}): we can compute the zeta function, it is rational and $\wp(\sigma_A)$ is precisely the entropy of $f$.



\begin{prop}\label{M4}
$\zeta_{\sigma_A}(z)=\displaystyle \frac{1}{\det(I-zA)}.$
\end{prop}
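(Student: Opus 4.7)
The plan is to reduce the computation of $\zeta_{\sigma_A}$ to a spectral calculation for the matrix $A$. The key is that $N_n(\sigma_A)$ admits a clean closed form in terms of $A$; once this is in hand, the exponential in the definition of $\zeta_{\sigma_A}$ unfolds telescopically via the power series of $-\log(1-w)$.

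First, I would count periodic points. A sequence $\underline{a}\in\Sigma_A$ satisfies $\sigma_A^n(\underline{a})=\underline{a}$ precisely when it is $n$-periodic, so it is determined by the block $(a_0,a_1,\ldots,a_{n-1})$ subject to the admissibility constraints $A_{a_0 a_1}=A_{a_1 a_2}=\cdots=A_{a_{n-2}a_{n-1}}=A_{a_{n-1}a_0}=1$; conversely, any such closed admissible loop extends uniquely to an $n$-periodic point of $\Sigma_A$. Counting these loops by summing the corresponding products of entries of $A$ yields
$$N_n(\sigma_A)=\sum_{i_0,i_1,\ldots,i_{n-1}\in[k]}A_{i_0 i_1}A_{i_1 i_2}\cdots A_{i_{n-2}i_{n-1}}A_{i_{n-1}i_0}=\tr(A^n).$$

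Next, I would plug this into the definition of $\zeta_{\sigma_A}$. Writing the eigenvalues of $A$ (with algebraic multiplicity) as $\lambda_1,\ldots,\lambda_k$, one has $\tr(A^n)=\sum_{j=1}^k\lambda_j^{\,n}$, and for $|z|$ smaller than $1/\max_j|\lambda_j|$ absolute convergence justifies the rearrangement
$$\sum_{n=1}^\infty\frac{N_n(\sigma_A)}{n}\,z^n=\sum_{j=1}^k\sum_{n=1}^\infty\frac{(\lambda_j z)^n}{n}=-\sum_{j=1}^k\log(1-\lambda_j z).$$
Exponentiating and using $\det(I-zA)=\prod_{j=1}^k(1-\lambda_j z)$, obtained by triangularizing $A$ over $\CC$, one arrives at
$$\zeta_{\sigma_A}(z)=\prod_{j=1}^k\frac{1}{1-\lambda_j z}=\frac{1}{\det(I-zA)},$$
and since both sides are rational in $z$, the identity propagates to an equality of rational functions on $\CC$.

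The only genuinely nontrivial input is the combinatorial identity $N_n(\sigma_A)=\tr(A^n)$, which is essentially the statement that $\tr(A^n)$ counts closed walks of length $n$ in the directed graph encoded by $A$; the remainder is standard bookkeeping with the logarithmic series and the characteristic-polynomial factorization of $\det(I-zA)$, so I anticipate no serious obstacle.
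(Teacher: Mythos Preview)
Your proof is correct and follows essentially the same route as the paper: both establish $N_n(\sigma_A)=\tr(A^n)$ (the paper cites this as a lemma from \cite{Shu}, while you sketch the closed-walk argument directly), then expand $\tr(A^n)=\sum_j\lambda_j^n$ and sum the logarithmic series to arrive at $1/\prod_j(1-\lambda_j z)=1/\det(I-zA)$. The only cosmetic differences are your triangularization justification for the determinant factorization (the paper instead substitutes $z\mapsto z^{-1}$ in $\det(zI-A)$) and your explicit mention of the convergence region, neither of which changes the substance.
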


\begin{proof}
Let $\lambda_1,\lambda_2,...,\lambda_k$ be the eigenvalues of $A$, so that
\[\det(zI-A)=(z-\lambda_1)(z-\lambda_2)\ldots(z-\lambda_k).\]
Replacing $z$ by $z^{-1}$, we get
\[\det(z^{-1}I-A)=(z^{-1}-\lambda_1)(z^{-1}-\lambda_2)\ldots(z^{-1}-\lambda_k)\]
and, multiplying both sides by $z^k$, we obtain
\[z^k\det(z^{-1}I-A)=z^k(z^{-1}-\lambda_1)(z^{-1}-\lambda_2)\ldots(z^{-1}-\lambda_k)\]
and so
\[\det(I-zA)=(1-\lambda_1 z)(1-\lambda_2 z)\ldots(1-\lambda_k z).\]

\ms

\begin{lem}[\cite{Shu}]\label{M3}
For all $n \in \mathbb{N}$, $N_n(\sigma_A)=\tr(A^n).$
\end{lem}

\ms

Since the eigenvalues of $A^n$ are $\lambda_1^n,\lambda_2^n,...,\lambda_k^n$, we have $\tr(A^n)=\sum_{m=1}^k \lambda_m^n$. Hence,
\[\zeta_{\sigma_A}(z)=\exp\left(\sum_{n=1}^\infty\frac{\sum_{m=1}^k \lambda_m^n}{n}z^n\right)=\exp\left(\sum_{m=1}^k\left(\sum_{n=1}^\infty\frac{(\lambda_m z)^n}{n}\right)\right).\]
As $\sum_{n=1}^\infty\frac{z^n}{n}=\log\left(\frac{1}{1-z}\right)$,
\begin{eqnarray*}
\zeta_{\sigma_A}(z) &=& \exp\left(\sum_{m=1}^k\log\left(\frac{1}{1-\lambda_m z}\right)\right) \\
&=& \exp\left(\log\left(\prod_{m=1}^k\left(\frac{1}{1-\lambda_m z}\right)\right)\right) \\
&=& \frac{1}{\prod_{m=1}^k(1-\lambda_m z)} \\
&=& \frac{1}{\det(I-zA)}.
\end{eqnarray*}
Thus, $\zeta_{\sigma_A}$ has no zeros, and its poles are the numbers $\frac{1}{\lambda_m}$, where $\{\lambda_1, \ldots, \lambda_k\}$ is the set of eigenvalues of the matrix $A$.
\end{proof}

\begin{exe}
\em Let $A=\left(\begin{array}{cc}1&1\\1&0\end{array}\right)$. The eigenvalues of $A$ are $\lambda_1=\frac{1+\sqrt{5}}{2}$ and $\lambda_2=\frac{1-\sqrt{5}}{2}$, so
\[\zeta_{\sigma_A}(z)=\frac{1}{(1-\lambda_1 z)(1-\lambda_2 z)}=\frac{1}{1-z-z^2}\]
with radius of convergence equal to $\frac{1}{\lambda_1}$.
\end{exe}

\ms

\begin{prop}\label{M5}
Let $A$ be an irreducible matrix with entries 0 or 1. Then the topological entropy of $\sigma_A$ is equal to $\wp(\sigma_A)=-\log\rho$, where $\rho$ is the radius of convergence of $\zeta_{\sigma_A}$.
\end{prop}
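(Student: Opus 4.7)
The plan is to stitch together Propositions~\ref{M2} and~\ref{M4} with the general relation $\rho=\exp(-\wp(f))$ established earlier in the section.

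First, I would use Proposition~\ref{M4}, which gives $\zeta_{\sigma_A}(z)=1/\det(I-zA)=1/\prod_{m=1}^{k}(1-\lambda_m z)$. This is a rational function whose (finitely many) poles are exactly the reciprocals $1/\lambda_m$ of the nonzero eigenvalues of $A$, so its radius of convergence equals the distance from the origin to the nearest pole:
\[
\rho \;=\; \min_{\lambda_m\neq 0}\,|1/\lambda_m| \;=\; \frac{1}{\max_m|\lambda_m|}.
\]
Since $A$ is a nonnegative irreducible $\{0,1\}$-matrix, the Perron--Frobenius theorem (as recalled in the statement before Proposition~\ref{M2}) produces the largest eigenvalue $\lambda>0$ with $\lambda=\max_m|\lambda_m|$. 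Hence $\rho=1/\lambda$, that is, $-\log\rho=\log\lambda$.

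Next, Proposition~\ref{M2} gives $h(\sigma_A)=\log\lambda$, so one of the two desired equalities, $h(\sigma_A)=-\log\rho$, follows immediately. For the other equality, $\wp(\sigma_A)=-\log\rho$, I would invoke the identity $\rho=\exp(-\wp(f))$ valid whenever $f$ has at least one periodic point. This hypothesis is automatic here: irreducibility of $A$ means that, for each symbol $i\in[k]$, there exists $n_i\in\NN$ with $(A^{n_i})_{ii}>0$, which yields a closed admissible loop of length $n_i$ and hence a periodic point of $\sigma_A$ of period $n_i$. Therefore $\wp(\sigma_A)=-\log\rho=\log\lambda=h(\sigma_A)$.

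There is essentially no difficult step: the argument is just a bookkeeping exercise once Propositions~\ref{M2} and~\ref{M4} are in hand. The only point that warrants care is to verify that $\sigma_A$ has at least one periodic point before appealing to $\rho=\exp(-\wp(f))$, and this is immediate from irreducibility. An alternative route that avoids citing Proposition~\ref{M2} altogether would be to combine Lemma~\ref{M3} with Perron--Frobenius to compute $\limsup_n(1/n)\log \tr(A^n)=\log\lambda$ directly, but the shortcut above suffices.
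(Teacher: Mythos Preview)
Your proposal is correct and follows essentially the same route as the paper: compute $\rho=1/\lambda$ from the factorization in Proposition~\ref{M4}, identify $\lambda$ via Perron--Frobenius, and then combine $h(\sigma_A)=\log\lambda$ (Proposition~\ref{M2}) with the general identity $\rho=\exp(-\wp(f))$. The only difference is that you make explicit the verification that $\sigma_A$ has a periodic point before invoking that identity, a detail the paper leaves implicit.
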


\begin{proof}
Since $\zeta_{\sigma_A}(z)=1/\det(I-zA)$ and
\[
\det(I-zA)=0\Leftrightarrow\prod_{m=1}^k(1-\lambda_m z)=0\Leftrightarrow \exists m\in[k]:z=1/\lambda_m \wedge \lambda_m\neq 0,
\]
the radius of convergence of $\zeta_{\sigma_A}$ is given by
\[
\rho=\min\left\{\left|1/\lambda_i\right|: i\in [k] \wedge \lambda_i\neq 0\right\}=1/\max\left\{\left|\lambda_i\right|: i\in [k] \wedge \lambda_i\neq 0\right\}=1/\lambda.
\]
Therefore $\wp(\sigma_A)=-\log\rho=\log\lambda$ is the topological entropy of $\sigma_A$. (So, in this case, we have $\wp\left(\sigma^n_A\right)=|n|\,\wp\left(\sigma_A\right)$, for all $n \in \mathbb{Z}$.)
\end{proof}

\subsection{Expansive maps}

Let $(X,d)$ be a metric space and $f:X\rightarrow X$ a continuous map.

\begin{defi}
We say that $\vep$ is an {\it expansivity constant} for $f$ if
\[d(f^n(x),f^n(y))\leq\vep,\,\,\,\forall n\in\NN_0\,\,\Longrightarrow \,\,x=y.\]
The map $f$ is said to be {\it expansive} if there is an {\it expansivity constant} for $f$.
\end{defi}

Notice that, if $f$ is expansive and $X$ is compact, then, for any $n \in \mathbb{N}$, the periodic points with period $n$ are isolated. In fact, as $f$ is uniformly continuous, we may associate, to the constant of expansivity $\vep$, a positive $\delta$ such that, for all $0\leq j < n$ and all $x, y \in X$,
$$d(x,y)<\delta \,\,\,\Rightarrow \,\,\,d(f^j(x),f^j(y)) < \vep \,\,\,\,\, \forall \, 0 \leq j < n.$$
If $p$ and $q$ are two distinct periodic points with period $n$, then, by the expansivity, there exists $t \in \mathbb{N}_0$ such that $d(f^t(p), f^t(q))\geq \vep$; as $f^n(p)=p$ and $f^n(q)=q$, such a $t$ may be chosen in $\{0,1,\ldots,n-1\}$; therefore we must have $d(p,q) \geq \delta$. And so, as $X$ is compact, the set of periodic points with period $n$ is finite, for all $n \in \mathbb{N}$.

\begin{prop}\label{M6}
If $(X,d)$ is a compact metric space and $f:X\rightarrow X$ is expansive, then $N_n(f)<\infty$, for all $n\in\NN$, and $\zeta_f$ has a positive radius of convergence.
\end{prop}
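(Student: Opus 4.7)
The plan is to exploit the expansivity constant $\vep$ of $f$ twice: first to show that the set of periodic points of period $n$ is $(n,\vep)$-separated (yielding finiteness of $N_n(f)$), and then to bound $s_n(\vep,X)$ exponentially in $n$ via a finite cover of $X$ (yielding a positive radius of convergence for $\zeta_f$).

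For the first assertion I would refine the observation already sketched in the paragraph preceding the statement. Let $\vep$ be an expansivity constant for $f$. If $p\neq q$ satisfy $f^n(p)=p$ and $f^n(q)=q$, expansivity produces some $t\in\NN_0$ with $d(f^t(p),f^t(q))>\vep$; writing $t=kn+r$ with $0\leq r<n$ and using the periodicity gives $d(f^r(p),f^r(q))>\vep$, hence $d_n(p,q)>\vep$. So the set of period-$n$ points is $(n,\vep)$-separated, whence $N_n(f)\leq s_n(\vep,X)$, and this is finite by compactness of $X$.

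For the second assertion the task is to bound $s_n(\vep,X)$ exponentially in $n$. By compactness, I extract a finite subcover $\{U_1,\ldots,U_N\}$ from the open cover $\{B_{\vep/2}(x):x\in X\}$, so that every $U_j$ has diameter at most $\vep$. For each $x\in X$ I pick an itinerary $(j_0,\ldots,j_{n-1})\in\{1,\ldots,N\}^n$ with $f^i(x)\in U_{j_i}$ for $0\leq i\leq n-1$. Two points that share an itinerary satisfy $d(f^i(x),f^i(y))\leq\vep$ for every such $i$, hence $d_n(x,y)\leq\vep$; by the definition of $(n,\vep)$-separation this forces distinct members of a separated set to carry distinct itineraries, so $s_n(\vep,X)\leq N^n$. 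Combining with the previous step gives $N_n(f)\leq N^n$, so $\limsup_n\sqrt[n]{N_n(f)}\leq N$, and the radius of convergence of $\zeta_f$ satisfies $\rho\geq 1/N>0$.

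There is no real technical obstacle here; the only care needed is to align the strict and non-strict inequalities coming from the definitions, and in particular to take a cover whose members have diameter $\leq\vep$ (not $<\vep$), so that the conclusion $d_n(x,y)\leq\vep$ really does trigger the $(n,\vep)$-separation condition as written in the paper.
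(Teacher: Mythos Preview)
Your argument is correct and essentially the same as the paper's: both fix a finite cover of $X$ by sets of diameter at most $\vep$ and use itineraries to bound $N_n(f)$ by $N^n$. The only cosmetic difference is that the paper packages the itinerary map as a single injective coding $\phi:X\to[r]^{\NN_0}$ (and observes that periodic points go to periodic sequences), whereas you factor through the inequality $N_n(f)\leq s_n(\vep,X)\leq N^n$; the content is identical.
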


\begin{proof}
Suppose that $f$ is a continuous map with expansivity constant $\vep$. Let $U_1,\ldots,U_r$ be a cover of $X$ with $diam(U_i)\leq\vep,\forall i\in\left[r\right]$ (notice that we can take $r=r_1(\vep,X)$). For each $x\in X$, let $\phi(x)=(a_0,a_1,a_2,\ldots)$, with $a_n=\min\{i\in\left[r\right]:f^n(x)\in U_i\}$. We can see that $$\phi(x)=\phi(y)\,\,\Rightarrow \,\,d(f^n(x),f^n(y))\leq\vep,\,\,\,\forall n\in\NN_0\,\,\Rightarrow \,\,x=y,$$
so $\phi$ is injective. Also, if $x$ is periodic with period $n$, then so is $\phi(x)$. Since the number of periodic points in $\left[r\right]^{\NN_0}$ with period $n$ is $r^n$, we have $N_n(f)\leq r^n<\infty$ and
\[\wp(f)=\limsup_{n\rightarrow\infty} \, \frac{\log(\max\{N_n(f),1\})}{n} \leq\log r\]
so
$$\rho \geq 1/r>0.$$
\end{proof}

\begin{cor} For all $z$ such that $\left|z\right|<1/r$, we have
$$ 1-r\left|z\right| \leq \left|\zeta_f(z)\right| \leq \frac{1}{1-r\left|z\right|}.$$
\end{cor}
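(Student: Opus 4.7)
The plan is to exploit the uniform bound $N_n(f)\leq r^n$ established in the proof of Proposition~\ref{M6}, combined with the elementary identity for the complex exponential $|e^w|=\exp(\mathrm{Re}(w))$, together with the expansion $-\log(1-x)=\sum_{n\geq 1} x^n/n$ valid for $|x|<1$.

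For the upper bound, I would simply estimate
\[
|\zeta_f(z)|=\left|\exp\Bigl(\sum_{n=1}^\infty \tfrac{N_n(f)}{n}z^n\Bigr)\right|=\exp\Bigl(\mathrm{Re}\sum_{n=1}^\infty \tfrac{N_n(f)}{n}z^n\Bigr)
\]
and bound the real part by the modulus term-by-term, using the triangle inequality and $N_n(f)\leq r^n$ to get
\[
\mathrm{Re}\sum_{n=1}^\infty \tfrac{N_n(f)}{n}z^n\leq \sum_{n=1}^\infty \tfrac{N_n(f)}{n}|z|^n\leq \sum_{n=1}^\infty \tfrac{(r|z|)^n}{n}=-\log(1-r|z|),
\]
where convergence is guaranteed by the hypothesis $|z|<1/r$. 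Exponentiating yields $|\zeta_f(z)|\leq 1/(1-r|z|)$.

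For the lower bound, the key observation, and the only slightly delicate point, is that although $\mathrm{Re}\sum \tfrac{N_n(f)}{n}z^n$ may well be negative, it is bounded below by the negative of its absolute value, which in turn is bounded by the same majorant series. Thus
\[
\mathrm{Re}\sum_{n=1}^\infty \tfrac{N_n(f)}{n}z^n\geq -\Bigl|\sum_{n=1}^\infty \tfrac{N_n(f)}{n}z^n\Bigr|\geq -\sum_{n=1}^\infty \tfrac{(r|z|)^n}{n}=\log(1-r|z|),
\]
and exponentiation gives $|\zeta_f(z)|\geq 1-r|z|$. I expect no real obstacle here; the main subtlety is just remembering to pass through $|e^w|=\exp(\mathrm{Re}(w))\geq \exp(-|w|)$ rather than attempting to bound $\zeta_f(z)$ directly in modulus.
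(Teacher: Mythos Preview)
Your proposal is correct and follows essentially the same route as the paper. The paper writes $|\zeta_f(z)|=\exp\bigl(\sum_{n\geq 1}\tfrac{N_n(f)}{n}\,\mathrm{Re}(z^n)\bigr)$ and then bounds each $\mathrm{Re}(z^n)$ above by $|z^n|$ and below by $-|z^n|$ term by term, whereas you bound $\mathrm{Re}\bigl(\sum\bigr)$ by $\pm\bigl|\sum\bigr|$ and then apply the triangle inequality; the two arguments are interchangeable and use the same ingredients ($|e^w|=e^{\mathrm{Re}(w)}$, $N_n(f)\leq r^n$, and $\sum x^n/n=-\log(1-x)$).
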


\begin{proof}
\begin{eqnarray*}
\left|\zeta_f(z)\right| &=& \left|\exp\left(\sum_{n=1}^\infty\frac{N_n(f)}{n}z^n\right)\right| = \exp\left(\sum_{n=1}^\infty\frac{N_n(f)}{n}Re(z^n)\right) \\
&\leq& \exp\left(\sum_{n=1}^\infty\frac{r^n}{n}\left|z^n\right|\right) = \exp\left(\sum_{n=1}^\infty\frac{(r\left|z\right|)^n}{n}\right) \\
&=& \exp\left(\log\left(\frac{1}{1-r\left|z\right|}\right)\right) = \frac{1}{1-r\left|z\right|}
\end{eqnarray*}
and, similarly,
$$\left|\zeta_f(z)\right| = \exp\left(\sum_{n=1}^\infty\frac{N_n(f)}{n}Re(z^n)\right) \geq \exp\left(\sum_{n=1}^\infty\frac{r^n}{n}(-\left|z^n\right|)\right)= 1-r\left|z\right|$$
for all $z$ such that $\left|z\right|<1/r$ (recall that $\rho\geq 1/r$).
\end{proof}

\noindent \textbf{Remark}: There are closed invariant subsets of $\Sigma(k)$ for which the zeta function for the restriction of $\sigma$ to those sets is not rational (see \cite{BL} for details).\\

\subsection{Hyperbolic $\mathcal{C}^1$ diffeomorphisms}

Let $f$ be a $\mathcal{C}^1$ diffeomorphism defined on a hyperbolic set with local product structure. The map $f$ is expansive (see \cite{Shu}), so $N_n(f)<\infty$ for all $n\in\NN$, and we can define the zeta function for $f$. Moreover, as proved in \cite{M} (see also \cite{Shu}),

\begin{theo}\label{M7}
The zeta function of a $\mathcal{C}^1$ diffeomorphism on a hyperbolic set with local product structure is rational.
\end{theo}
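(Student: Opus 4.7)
My plan is to follow the classical Manning--Bowen strategy, coding $f$ on the hyperbolic set $\Lambda$ by a subshift of finite type via a Markov partition, and then correcting the overcounting of periodic points on the rectangle boundaries by an inclusion--exclusion argument.

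First I would invoke Bowen's theorem on the existence of Markov partitions $\mathcal{R}=\{R_1,\ldots,R_k\}$ of $\Lambda$ of arbitrarily small diameter; by the local product structure, each $R_i$ is a ``rectangle'' $[V_i,W_i]$ with boundary decomposed into a stable part $\partial^s R_i$ and an unstable part $\partial^u R_i$, satisfying $f(\partial^s R_i)\subset \bigcup_j \partial^s R_j$ and $f^{-1}(\partial^u R_i)\subset \bigcup_j \partial^u R_j$. Let $A\in M_k$ be the transition matrix, $A_{ij}=1$ iff $\mathrm{int}(R_i)\cap f^{-1}(\mathrm{int}(R_j))\neq\emptyset$, and let $\pi\colon\Sigma_A\to\Lambda$ be the induced semiconjugacy, $\pi(\underline a)=\bigcap_{n\in\ZZ} f^{-n}(R_{a_n})$. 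Here $\pi$ is continuous and surjective, with $\pi\circ\sigma_A=f\circ\pi$, so $\pi$ sends $n$-periodic orbits to $n$-periodic orbits.

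The obstacle is that $\pi$ is not injective: a point $x\in\Lambda$ may lie in several rectangles, so it has $c(x)\geq 1$ preimages, where $c(x)$ is the number of rectangles containing $x$ (the ``coding multiplicity''). Consequently
\[
N_n(\sigma_A)\;=\;\sum_{f^n(x)=x}\, c(x),
\]
and the goal is to untangle this into a finite alternating sum of fixed-point counts of subshifts of finite type. The key step is to enumerate, for every nonempty subset $S\subseteq\{1,\ldots,k\}$, the set
\[
\Lambda_S\;=\;\bigcap_{i\in S}R_i\setminus \bigcup_{j\notin S}R_j,
\]
so that $c(x)=|S|$ iff $x\in\Lambda_S$. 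Using $f(\partial^s R_i)\subset \partial^s(\cdot)$ and the analogous property for $\partial^u$, one checks that the set $\Sigma_{A,S}\subset\Sigma_A$ of sequences $\underline a$ with $\pi(\underline a)\in\overline{\Lambda_S}$ (for a suitably chosen closure/interior convention on the boundary strata) is itself a subshift of finite type $\Sigma_{A^{(S)}}$, because the allowed transitions between rectangles restrict compatibly with the filtration by boundary order. Standard inclusion--exclusion on the multiplicities $|S|$ then yields integer coefficients $m_S$ and subshifts of finite type $\sigma_{A^{(S)}}$ with
\[
N_n(f)\;=\;\sum_{S} m_S\, N_n(\sigma_{A^{(S)}}), \qquad m_S\in\ZZ.
\]

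Once this identity is in hand, the rest is immediate. Exponentiating term by term gives
\[
\zeta_f(z)\;=\;\prod_{S}\zeta_{\sigma_{A^{(S)}}}(z)^{m_S},
\]
and by Proposition~\ref{M4} each factor $\zeta_{\sigma_{A^{(S)}}}(z)=1/\det(I-zA^{(S)})$ is rational, so $\zeta_f$ is rational as a finite product of rational functions raised to integer powers. I expect the main obstacle to be the precise bookkeeping of the boundary strata: verifying that each ``order $\ell$ boundary'' set is again coded by a subshift of finite type requires using the Markov property separately on $\partial^s R_i$ and $\partial^u R_i$ under $f$ and $f^{-1}$ respectively, together with a careful interior/closure convention so that no periodic point is lost or double-counted when the coefficients $m_S$ are assembled.
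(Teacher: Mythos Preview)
The paper does not give its own proof of this theorem; it is quoted from Manning~\cite{M} and only briefly summarised. So there is no ``paper's proof'' to match. That said, your outline has a genuine gap that the actual Manning argument (and the paper's later adaptation of it to the Ruelle-expanding case, Proposition~\ref{M22} and Theorem~\ref{M23}) is specifically designed to close.

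The problem is the identity
\[
N_n(\sigma_A)\;=\;\sum_{f^n(x)=x}\, c(x),
\]
which is not correct as stated. The semiconjugacy $\pi$ does send $n$-periodic points of $\sigma_A$ to $n$-periodic points of $f$, but the fibre over an $n$-periodic point $x$ of $f$ need not consist of $n$-periodic points of $\sigma_A$: the map $\sigma_A^{\,n}$ permutes $\pi^{-1}(x)$, and only the fixed points of that permutation contribute to $N_n(\sigma_A)$. (This is exactly the second bullet the paper lists just before Proposition~\ref{M22}: two rectangles can be interchanged by $f^n$ while their common boundary is fixed, so a genuine $n$-periodic point of $f$ lifts only to $2n$-periodic points of the shift.) Your inclusion--exclusion over subsets $S$ with integer coefficients $m_S$ and auxiliary $0/1$ matrices $A^{(S)}$ cannot repair this, because the discrepancy is not a static overcount determined by which rectangles contain $x$; it depends on the cycle structure of the permutation that $\sigma_A^{\,n}$ induces on $\pi^{-1}(x)$.

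Manning's fix, reproduced in the paper for the expanding case, is to index not by subsets but by $r$-tuples of mutually intersecting rectangles (the sets $I_r$), and to replace the $0/1$ transition matrices by \emph{signed} matrices $B^{(r)}$ whose nonzero entries are $\mathrm{sgn}(\mu)\in\{\pm 1\}$, the signature of the unique permutation relating the two $r$-tuples. One then shows
\[
N_n(f)\;=\;\sum_{r=1}^{L}(-1)^{r-1}\,\tr\bigl((B^{(r)})^n\bigr),
\]
the signatures producing exactly the cancellation needed to turn the permutation-fixed-point count into the constant $1$ on each fibre. From this the rationality of $\zeta_f$ follows as you indicate. Your proposal is on the right track structurally, but the missing idea is the signature weighting; without it the bookkeeping you describe cannot be made to balance.
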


As a consequence, if $f$ is a $\mathcal{C}^1$ diffeomorphism such that $\overline{Per(f)}$ is hyperbolic, then $\zeta_f$ is a rational function: in fact, it is known that, if $\overline{Per(f)}$ is hyperbolic, then it has a local product structure \cite{Shu}; and $\zeta_f=\zeta_{f|_{\overline{Per(f)}}}$. In particular, if $f$ is Axiom A, then $\zeta_f$ is rational.\\

The main ingredient of the known argument to prove this Theorem is the existence of a Markov partition of arbitrarily small diameter, which allows one to establish a codification of most of the orbits of $f$ through a subshift of finite type (for which we already know how to count the periodic points) and to translate the properties of the zeta function from the subshift to the diffeomorphism setting.

\begin{exe}
\em
If $f$ is the linear toral endomorphism induced by an integer matrix M, then the number of fixed points for $f^n$ is $N_n(f)=|det(M^n-1)|$ (see \cite{BRW}). In particular, if $f$ is a hyperbolic automorphism, then $N_n(f)=\varsigma^n (\tr(M^n)-1-\det(M)^n)$, where $\varsigma=sgn(\tr(M))$; thus
\[
\zeta_f(z)=\frac{(1-\varsigma \, z)(1-\varsigma\,\det(M)\,z)}{\det(I-\varsigma\, M \, z)}=\frac{(1-\varsigma \, z)(1-\varsigma\,\det(M)\,z)}{1-|tr(M)|z+\det(M)z^2}
\]
which is a rational function with integer coefficients.\\

For instance, if $M=\left(
\begin{array}{cc}
2 & 1 \\
1 & 1
\end{array}
\right)$, then
$$N_n(f)=\left(\frac{3+\sqrt{5}}{2}\right)^n+\left(\frac{3-\sqrt{5}}{2}\right)^n -2$$
and so
\[
\zeta_f(z)=\frac{(1-z)^2}{\left(1-(\frac{3+\sqrt{5}}{2})z\right)\left(1-(\frac{3-\sqrt{5}}{2})z\right)}=\frac{(1-z)^2}{1-3z+z^2}
\]
with radius of convergence equal to $\frac{2}{3+\sqrt{5}}$.
\end{exe}

\subsection{Ruelle-expanding maps}\label{RE}

Let $(K,d)$ be a compact metric space and $f:K\rightarrow K$ a continuous map.

\begin{defi}\label{R-expanding}
$f$ is {\it Ruelle-expanding} if there are $r>0$, $0<\lambda<1$ and $c>0$ such that:
\begin{itemize}
\item $\forall x,y \in K, \,\,x\neq y \wedge f(x)=f(y)\,\,\Longrightarrow \,\,d(x,y)>c$
\item $\forall x \in K,\,\,\forall a \in f^{-1}(\{x\}),\,\,\exists \,\,\phi:B_r(x)\rightarrow K$ verifying
$$\phi(x)=a$$
$$(f\circ\phi)(y)=y,\,\,\,\forall y \in B_r(x)$$
$$d(\phi(y),\phi(z))\leq \lambda d(y,z),\,\,\,\forall y,z\in B_r(x).$$
\end{itemize}
\end{defi}

\begin{exe}
\em Let $M$ be a compact Riemannian manifold without boundary and consider a $\mathcal{C}^1$ map $f:M\rightarrow M$. One says that $f$ is {\it expanding} if
$$\exists\,\,\lambda\in\left]0,1\right[:\,\,\forall x\in M,\,\,\left\|D_x f(v)\right\|\geq 1/\lambda \left\|v\right\|.$$
It is easy to prove that, in the $\mathcal{C}^1$ context, $f$ is expanding if and only if it is Ruelle-expanding. More details about this family of maps may be found in \cite{Ru3}. One example of such a map is
\begin{eqnarray*}
f:S^1&\rightarrow&S^1\\
z&\mapsto&z^k
\end{eqnarray*}
with $k>1$ a positive integer. It is the lifting to $S^1$ of the piecewise expanding map
\begin{eqnarray*}
T:[0,1] & \rightarrow & [0,1]\\
t & \mapsto & kt \,\, \text{ mod } 1
\end{eqnarray*}
it is expanding, with $\lambda=1/k$, and its topological entropy is equal to $\log k$. This map has $k^n-1$ periodic points with period $n$ and so its $\zeta$-function is equal to
\begin{eqnarray*}
\zeta_f(z)&=&\exp\left(\sum_{n=1}^\infty \frac{k^n-1}{n}z^n\right) \\
&=&\exp\left(-\log(1-kz)+\log(1-z)\right) \\
&=&\frac{1-z}{1-kz}
\end{eqnarray*}
which is a rational function, with radius of convergence equal to $\frac{1}{k}$.

More generally, if $L:\mathbb{R}^n \rightarrow \mathbb{R}^n$ is a linear map whose eigenvalues have absolute value bigger than one and such that $L(\mathbb{Z}^n)\subseteq \mathbb{Z}^n$, then $L$ induces in the flat torus $\mathbb{R}^n / \mathbb{Z}^n$ a Ruelle-expanding map. (Conversely, any $\mathcal{C}^1$ expanding map in the $n$-dimensional flat torus is topologically conjugate to one obtained by this process \cite{Shu3}.)

\end{exe}

\begin{exe}
\em Let $\Sigma(k)^+$ be the product space $[k]^{\NN_0}$, whose elements are the sequences $\underline{a}=(a_0,a_1,\ldots)$, with $a_n\in [k],\forall n\in\NN_0$, endowed with the product topology which can be generated by the metric given by $d(\underline{a},\underline{b})=\sum_{n=0}^{\infty}\frac{\delta_n(\underline{a},\underline{b})}{2^n}$, where $\delta_n(\underline{a},\underline{b})$ is $0$ when $a_n=b_n$ and $1$ otherwise. The dynamics in $\Sigma(k)^+$, called \textit{unilateral} (or \textit{one-sided}) \textit{shift}, is defined as $(\sigma^+(\underline{a}))_i=a_{i+1},\,\,i\in\NN_0$. For each $A\in M_k$, consider $\Sigma_A^+=\{\underline{a}\in\Sigma(k)^+:A_{a_i a_{i+1}}=1,\forall i\in\NN_0\}$. The pair $(\Sigma_A^+,\sigma_A^+)$, where $\sigma_A^+=\sigma^+|_{\Sigma_A^+}$, is called a {\it unilateral subshift of finite type}. $\sigma_A^+$ is Ruelle-expanding, with $r=1$ and $\lambda=c=1/2$:

\begin{itemize}
\item If $\underline{a}\neq\underline{b}$ and $\sigma_A^+(\underline{a})=\sigma_A^+(\underline{b})$, then $a_0\neq b_0$, so $d(\underline{a},\underline{b})\geq 1>c$.
\item If $r=1$, then, for any $\underline{a}\in\Sigma_A^+$ we have $B_r(\underline{a})=\{\underline{b}\in\Sigma_A^+:b_0=a_0\}$ since, as we have seen, $b_0\neq a_0\Rightarrow d(\underline{a},\underline{b})\geq 1=r$. Also, the pre-images of $\underline{a}=(a_0,a_1,a_2,\ldots)$ are of the form $(x,a_0,a_1,\ldots)$, where $A_{x a_0}=1$. If we define $\phi(\underline{b})=(x,b_0,b_1,b_2,\ldots)$ for $\underline{b}=(b_0,b_1,b_2,\ldots)\in B_r(\underline{a})$ (that is to say, with $a_0=b_0$), then $\sigma_A^+(\phi(\underline{b}))=\underline{b}$ and, for any $\underline{b},\underline{c}\in B_r(\underline{a})$, we have
\[d(\phi(\underline{b}),\phi(\underline{c}))=\sum_{n=1}^{\infty}\frac{\delta_{n-1}(\underline{b},\underline{c})}{2^n}=\sum_{n=0}^{\infty}\frac{\delta_n(\underline{b},\underline{c})}{2^{n+1}}=\frac{d(\underline{b},\underline{c})}{2}=\lambda d(\underline{b},\underline{c}).\]
\end{itemize}

If we take $\sigma=\sigma_A^+$, then $\underline{a}\in\Sigma_A^+$ is a fixed point of $\sigma^n$ if and only if $a_i=a_{i+n},\forall i\in\NN_0$. To each fixed point of $\sigma^n$, given by $\underline{a}=(a_0,a_1,a_2,...,a_0,a_1,a_2,...)$, we can associate a unique admissible sequence of length $n+1$ defined by $a_0 a_1 a_2 ... a_{n-1} a_0$. So the number of fixed points of $\sigma^n$ is $N_n(\sigma)=\tr(A^n)$ and so $\zeta_{\sigma}(z)=\frac{1}{\det(I-z A)}$, also a rational function. The full one-sided shift is just a particular case of a subshift of finite type, with $A_{i j}=1,\forall i,j\in [k]$, and its zeta function is $\zeta_{\sigma}(z)=\frac{1}{1-kz}$.
\end{exe}

\ms

\noindent \textbf{Remark}: The dynamics of the circle map $f(z)=z^k$ is essentially the one of the full one-sided shift $\sigma$ defined on $\Sigma(k)^+$. However, the semiconjugacy between these two dynamical systems maps two distinct fixed points of $\sigma$ (more precisely $(1,1,1,\ldots)$ and $(k,k,k,\ldots)$) into the same (and unique) fixed point of $f$. This explains the difference between $\zeta_f(z)=\frac{1-z}{1-kz}$ and $\zeta_{\sigma}(z)=\frac{1}{1-kz}$.

\ms

\noindent \textbf{Remark}: Similarly to what happens with the bilateral subshift, the topological entropy of the one-side subshift of finite type $\sigma_A^+:\Sigma_A^+\rightarrow\Sigma_A^+$ associated to an irreducible matrix $A$ is $\log\lambda$, where $\lambda$ is the largest positive eigenvalue of A. Since the radius of convergence $\rho$ of $\zeta_{\sigma_A^+}$ is given by $1/\lambda$ (the argument is identical to the one used in the two-sided subshift setting), we conclude that the topological entropy of the subshift of finite type is $-\log\rho$ in both cases. That is, topological and periodic entropies are equal in this setting. Moreover, the probability measure of maximal entropy is the weak* limit of the sequence $\left(\nu_n\right)_{n \in \mathbb{N}}$ defined, for each $n \in \mathbb{N}$, by
$$\nu_n=\displaystyle \frac{1}{N_n(f)}\,\sum_{x \,\in \,\text{Per}_n(f)}\, \delta_x$$
where $f=\sigma_A^+$ or $f=\sigma_A$ (details in \cite{Wa}).

\ms

\begin{defi}\label{contractive branch}
Let $f:K\rightarrow K$ be Ruelle-expanding and $S\subseteq K$. Given $n\in\NN$, we say that $g:S\rightarrow K$ is a {\it contractive branch} of $f^{-n}$ if
\begin{itemize}
	\item $(f^n\circ g)(x)=x, \forall x \in S$
	\item $d((f^j\circ g)(x),(f^j\circ g)(y))\leq \lambda^{n-j}d(x,y),\,\,\forall x,y\in S,j\in\{0,1,\ldots,n\}.$
\end{itemize}
\end{defi}

\bigskip

It is easy to see (\cite{Ru1},\cite{Cra}) that, given $x\in K$, $n\in\NN$ and $a\in f^{-n}(\{x\})$, there is always a contractive branch $g:B_r(x)\rightarrow K$ of $f^{-n}$ with $g(x)=a$. Moreover,

\ms

\begin{prop}\label{M8}
There is $\vep_0<r$ such that, for every $\vep$ with $0<\vep<\vep_0$, we have:
\begin{itemize}
	\item[(a)] $\forall n\in\NN$, $B(n,\vep,x)=g(B_{\vep}(f^n(x)))$, where $g:B_r(f^n(x))\rightarrow K$ is a contractive branch of $f^{-n}$ with $g(f^n(x))=x.$
	\item[(b)] $\vep$ is an expansivity constant for $f.$
\end{itemize}
\end{prop}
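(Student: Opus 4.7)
The plan is to fix $\vep_0 < \min\{r,c/2\}$, where $r$ and $c$ are the constants from Definition~\ref{R-expanding}. For any $0<\vep<\vep_0$, this choice simultaneously places each $B_\vep(f^n(x))$ inside the domain $B_r(f^n(x))$ of an arbitrary contractive branch and keeps $2\vep<c$, so that the Ruelle separation condition on preimages will be at our disposal.

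For (a), one inclusion is painless. Given $z\in B_\vep(f^n(x))$ and $y=g(z)$, the contraction inequality from Definition~\ref{contractive branch} together with $g(f^n(x))=x$ gives
\[
d(f^j(y),f^j(x))=d\bigl((f^j\circ g)(z),(f^j\circ g)(f^n(x))\bigr)\leq \lambda^{n-j}d(z,f^n(x))<\vep
\]
for every $j\in\{0,\dots,n\}$, so $y\in B(n,\vep,x)$. The substantive inclusion $B(n,\vep,x)\subseteq g(B_\vep(f^n(x)))$ is where the Ruelle-expanding hypothesis earns its keep. Given $y\in B(n,\vep,x)$, I would set $z:=f^n(y)\in B_\vep(f^n(x))$ and write $u_j:=f^j(g(z))$, $v_j:=f^j(y)$, so that $u_n=z=v_n$. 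Combining the contraction bound on the $u_j$ with the Bowen-ball bound on the $v_j$ yields $d(u_j,v_j)<2\vep<c$ for every $j$. I would then run a backward induction from $j=n$ down to $j=0$: whenever $u_{j+1}=v_{j+1}$, the points $u_j$ and $v_j$ are both preimages of a common value under $f$, and the separation condition $d(\cdot,\cdot)>c$ for distinct preimages is incompatible with $d(u_j,v_j)<c$, forcing $u_j=v_j$. At $j=0$ this collapses to $y=g(z)\in g(B_\vep(f^n(x)))$.

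Part (b) reuses the same engine. If $d(f^n(x),f^n(y))\leq \vep$ for every $n\in\mathbb{N}_0$, I would fix an arbitrary $n$, take the contractive branch $g$ of $f^{-n}$ with $g(f^n(x))=x$, and repeat the backward induction (the bound $d(u_j,v_j)\leq 2\vep<c$ remains valid). This identifies $y$ with $g(f^n(y))$, and the contraction inequality at $j=0$ then gives
\[
d(x,y)=d(g(f^n(x)),g(f^n(y)))\leq \lambda^n d(f^n(x),f^n(y))\leq \lambda^n\vep,
\]
whence $x=y$ upon letting $n\to\infty$. The hard part is precisely the backward induction, where the two defining features of a Ruelle-expanding map — contraction of inverse branches and separation of distinct preimages — cooperate: contraction alone cannot single out $y$ as a particular preimage of $f^n(y)$, while separation alone would not propagate that identification back along the orbit.
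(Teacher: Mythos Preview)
Your proof is correct and follows essentially the same strategy as the paper: both arguments pin down the nontrivial inclusion in (a) by showing that $y$ and $g(f^n(y))$ are preimages of a common point lying within distance $<c$ of one another, and then invoke the Ruelle separation condition to force equality; part (b) then follows immediately from the contraction estimate. The only minor difference is organizational---the paper treats the case $n=1$ explicitly and appeals to induction on $n$, while you fix $n$ and run a backward induction along the orbit---and quantitative: the paper takes $\vep_0=\min\{r,\frac{c}{1+\lambda}\}$, which is a bit sharper than your $\min\{r,c/2\}$ because it routes the triangle inequality directly through $x$ (one leg bounded by $\lambda\vep$ via contraction, the other by $\vep$) rather than bounding both legs by $\vep$.
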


\ms

\begin{proof}

\noindent $(a)$ Consider $\vep_0=\min\, \{r, \frac{c}{1+\lambda}\}$ and $0<\vep<\vep_0$.

\noindent The inclusion $B(n,\vep,x)\supseteq g(B_{\vep}(f^n(x)))$ is valid by definition of contractive branch. Conversely, for $n=1$, take $x \in K$ and $z \in B(1,\vep,x)$. Then $d(z,x)\leq \vep$ and $d(f(z),f(x))\leq \vep$. So, if $g:B_r(f(x))\rightarrow K$ is the map $\phi$ obtained in Definition~\ref{R-expanding} using $f(x)$ and $a=x \in f^{-1}(\{f(x)\})$, then $g(f(x))=x$ and
$$d(g \circ f(z),x) = d(g \circ f(z),g \circ f(x))\leq \lambda \, \vep.$$
Therefore,
$$d(z, g\circ f(z)) \leq d(z,x) + d(x, g\circ f(z)) \leq \vep + \lambda \, \vep = (1+\lambda)\vep < c.$$
As $f(z)=f(g(f(z)))$, we must have $z=g(f(z))$, which proves that $B(1,\vep,x)\subseteq g(B_{\vep}(f^n(x)))$. The argument proceeds by induction.

\bigskip

\noindent $(b)$ If $d(f^n(x), f^n(y))\leq \vep$ for all $n \in \mathbb{N}_0$, then, using item $(a)$, we deduce that $d(x,y)\leq \lambda^n\,\vep$, for all $n \in \mathbb{N}_0$, and so $x=y$.
\end{proof}

\begin{prop}[\cite{Ru1},\cite{Cra}]\label{M9}
$K=\bigcup_{n\geq 0}f^{-n}(\overline{Per(f)})$, where $Per(f)$ is the set of periodic points of $f$. In particular, $Per(f)\neq\emptyset$.
\end{prop}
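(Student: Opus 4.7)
The plan is to use the Ruelle-expanding structure, via contractive inverse branches and Banach's fixed-point theorem, in two stages: first catch any $\omega$-limit point of a forward orbit as a limit of periodic points (which also yields $Per(f)\neq\emptyset$), and then transport those periodic shadows back through a further inverse branch to populate neighborhoods of a general $x \in K$ with points of $\bigcup_{n \geq 0}f^{-n}(\overline{Per(f)})$.

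\textbf{Stage 1 (periodic shadows of $\omega$-limit points).} Fix $x \in K$ and $z \in \omega(x)$, so $f^{n_k}(x) \to z$ along some subsequence $n_k \uparrow \infty$. Choose $\epsilon \in (0, \epsilon_0/2)$ with $2\epsilon < r$ and $k$ so large that both $d(f^{n_k}(x), z)$ and $d(f^{n_{k+1}}(x), z)$ are less than $\epsilon/2$. Setting $w = f^{n_k}(x)$, $m = n_{k+1} - n_k$, $y = f^m(w) = f^{n_{k+1}}(x)$, one has $d(w, y) < \epsilon$. The Ruelle-expanding property supplies a contractive branch $g \colon B_r(y) \to K$ of $f^{-m}$ with $g(y) = w$ and Lipschitz constant $\lambda^m$. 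For $k$ so large that $2\lambda^m \le 1$, any $u \in \overline{B}_\epsilon(w)$ satisfies
\[
d(g(u), w) = d(g(u), g(y)) \le \lambda^m\bigl(d(u, w) + d(w, y)\bigr) \le 2\lambda^m \epsilon \le \epsilon,
\]
so $g$ sends $\overline{B}_\epsilon(w)$ into itself. Banach's theorem produces a unique fixed point $p$ of $g$ in this ball, and $f^m(p) = p$ makes $p$ a periodic point lying within $\epsilon$ of $w$, hence within $3\epsilon/2$ of $z$. Letting $\epsilon \to 0$ gives $z \in \overline{Per(f)}$; so $\omega(x) \subseteq \overline{Per(f)}$, and in particular $Per(f) \neq \emptyset$.

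\textbf{Stage 2 (pull-back).} To push $x$ itself into $\bigcup_{n\geq 0} f^{-n}(\overline{Per(f)})$, apply the contractive branch $h \colon B_r(w) \to K$ of $f^{-n_k}$ with $h(w) = x$ to the periodic point $p$. Then $h(p) \in f^{-n_k}(\{p\}) \subseteq f^{-(n_k + m)}(Per(f))$ and
\[
d(h(p), x) = d(h(p), h(w)) \le \lambda^{n_k}\, d(p, w) \le \lambda^{n_k} \epsilon.
\]
As $k \to \infty$ these pre-periodic points accumulate at $x$, giving $K \subseteq \overline{\bigcup_{n \geq 0} f^{-n}(\overline{Per(f)})}$.

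\textbf{Main obstacle.} The delicate step is to remove the outer closure and obtain the set-equality $K = \bigcup_{n \geq 0} f^{-n}(\overline{Per(f)})$: some specific iterate $f^n(x)$ must actually lie in $\overline{Per(f)}$, not just limit on it. I would invoke the expansivity part of the preceding Proposition: every $\epsilon < \epsilon_0$ is an expansivity constant, so if the forward orbit of $x$ is eventually $\epsilon$-shadowed by the forward orbit of some $b_0 \in \overline{Per(f)}$ (whose orbit stays in $\overline{Per(f)}$ by forward-invariance), expansivity forces $f^{n_0}(x) = f^{n_0}(b_0) \in \overline{Per(f)}$ for some $n_0$. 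Producing such a shadowing $b_0$ — by iteratively refining the finite-time Banach-fixed-point approximations of Stage 1 into a genuine orbit of $\overline{Per(f)}$ that tracks the forward orbit of $x$ — is the main technical obstacle.
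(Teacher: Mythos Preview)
The paper does not supply its own proof of this proposition; it is simply quoted from the references \cite{Ru1} and \cite{Cra}. So there is no argument in the paper to compare against, and your attempt must stand on its own.

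Your Stages 1 and 2 are essentially correct. One small repair in Stage~1: writing ``for $k$ so large that $2\lambda^m \le 1$'' is not quite right, since increasing $k$ does not automatically increase the gap $m = n_{k+1} - n_k$; but you may simply thin the subsequence $(n_k)$ so that consecutive gaps exceed a fixed threshold (or note that any $m \ge 1$ already works once $\lambda < 1/2$, which can be arranged by replacing $f$ with a power). Stage~2 then correctly gives density of $\bigcup_{n} f^{-n}(\overline{Per(f)})$ in $K$, though it will turn out to be unnecessary.

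Your identification of the remaining obstacle is accurate, and your proposed resolution via expansivity is the right one---but you stop short of executing it, and the missing step is short enough that it ought to be written out rather than flagged as a difficulty. The observation you have not exploited is that the periodic point $p$ produced in Stage~1 shadows the \emph{entire} orbit segment, not just the endpoint: by Definition~\ref{contractive branch} one has $d(f^j(g(u)), f^j(g(v))) \le \lambda^{m-j}\, d(u,v)$ for $0 \le j \le m$, and taking $u = p$ (so $g(p) = p$) and $v = y$ (so $g(y) = w$) gives
\[
d(f^j(p), f^j(w)) \le \lambda^{m-j}\, d(p, y) < 2\epsilon \qquad (0 \le j \le m).
\]
Now fix $w = f^{n_1}(x)$ once and for all, and for each $K \ge 2$ run Stage~1 with return time $m = n_K - n_1$ (both $f^{n_1}(x)$ and $f^{n_K}(x)$ lie within $\epsilon/2$ of $z$, hence within $\epsilon$ of each other). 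This yields periodic points $p_K$ with $d(f^j(w), f^j(p_K)) < 2\epsilon$ for all $0 \le j \le n_K - n_1$. Any subsequential limit $q$ of $(p_K)_K$ lies in $\overline{Per(f)}$ and satisfies $d(f^j(w), f^j(q)) \le 2\epsilon$ for \emph{every} $j \ge 0$. Having chosen $2\epsilon < \epsilon_0$, expansivity (Proposition~\ref{M8}(b)) forces $w = q \in \overline{Per(f)}$, i.e.\ $f^{n_1}(x) \in \overline{Per(f)}$, which is precisely the set equality. With this in hand Stage~2 becomes redundant.
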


Notice that, since $f$ is expansive, its zeta function has a positive radius of convergence. Also, as $f$ has at least one periodic point,
$$\rho=\exp(-\wp(f))\leq 1.$$

\section{Examples}

The existence of a differentiable expanding map is a nontrivial topological restriction on the compact manifold. For instance, among orientable compact surfaces without boundary, only the torus possesses such kind of maps. In general, the set of $\mathcal{C}^1$ expanding maps defined on a connected compact flat manifold is non-empty (\cite{ES}). The fact now proved that the $\zeta$-function of an expanding map is rational evinces another instance of rigidity in the sense that, for some $k$, the first $k$ numbers of the sequence $\left(N_n(f)\right)_{n\in\NN}$ determine all the others.

\begin{cor}\label{M24}
Given a $\mathcal{C}^1$ expanding map on a compact Riemannian manifold, there are constants $k\in\NN_0$, $\ell\in\NN$, $(\gamma_i)_{1\leq i\leq k}$, $(n_i)_{1\leq i\leq k}\in\NN$, $(\eta_j)_{1\leq j\leq\ell}$ and $(m_j)_{1\leq j\leq\ell}\in\NN$ such that
\[
N_n(f)=\sum_{j=1}^{\ell}\frac{m_j}{(\eta_j)^{n}}-\sum_{i=1}^{k}\frac{n_i}{(\gamma_i)^{n}}.
\]
\end{cor}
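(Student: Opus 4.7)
The plan is to extract the formula directly from the rationality of $\zeta_f$ by factoring it over $\mathbb{C}$ and expanding the logarithm term by term. The heavy lifting has already been done by Theorem~\ref{MainTheorem1}; what remains is a purely algebraic manipulation.

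First, recall that a $\mathcal{C}^1$ expanding map on a compact Riemannian manifold is Ruelle-expanding, as observed in the example of Section~\ref{RE}. Thus Theorem~\ref{MainTheorem1} applies and $\zeta_f$ is a rational function. Since $\zeta_f(0)=\exp(0)=1$, I can write $\zeta_f=P/Q$ with coprime polynomials $P,Q\in\mathbb{C}[z]$ normalized by $P(0)=Q(0)=1$. Factoring over $\mathbb{C}$,
\[
P(z)=\prod_{i=1}^{k}\left(1-\frac{z}{\gamma_i}\right)^{n_i},\qquad Q(z)=\prod_{j=1}^{\ell}\left(1-\frac{z}{\eta_j}\right)^{m_j},
\]
where $\gamma_i$ and $\eta_j$ are the distinct zeros and poles of $\zeta_f$ with multiplicities $n_i,m_j\in\NN$. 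Here $k\in\NN_0$ (possibly $k=0$, if $P\equiv 1$), while $\ell\geq 1$ because $f$ has periodic points by Proposition~\ref{M9}, so $\zeta_f$ cannot be a polynomial (the exponential of a nontrivial power series with nonnegative coefficients has infinitely many nonzero coefficients).

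Next, I would take logarithms on a disk around $0$ small enough that all local expansions converge. Using $-\log(1-w)=\sum_{n\geq 1}w^n/n$ for $|w|<1$, one obtains
\[
\sum_{n=1}^{\infty}\frac{N_n(f)}{n}\,z^n=\log\zeta_f(z)=\sum_{j=1}^{\ell}m_j\sum_{n=1}^{\infty}\frac{z^n}{n\,\eta_j^{n}}-\sum_{i=1}^{k}n_i\sum_{n=1}^{\infty}\frac{z^n}{n\,\gamma_i^{n}}.
\]
Matching the coefficients of $z^n$ on both sides yields the claimed formula for $N_n(f)$.

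There is essentially no obstacle beyond invoking Theorem~\ref{MainTheorem1}: the factorization over $\mathbb{C}$ and the logarithmic expansion are entirely routine. The only minor point worth checking is that the logarithmic series involved all converge on a common disk around the origin, which is automatic because $\zeta_f$ and $1/\zeta_f$ are analytic and nonvanishing at $0$, so the identity may be established there and then read off at the level of formal power series coefficients.
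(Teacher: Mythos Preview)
Your proof is correct and follows essentially the same route as the paper: factor the rational $\zeta_f$ over $\mathbb{C}$ into its zeros and poles, then expand and match coefficients. The only cosmetic difference is that the paper takes the logarithmic derivative $\zeta_f'/\zeta_f$ and expands the partial fractions $\frac{n_i}{z-\gamma_i}$, $\frac{m_j}{z-\eta_j}$ as geometric series, whereas you take $\log\zeta_f$ directly and use the Taylor series of $-\log(1-w)$; these are equivalent term-by-term.
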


\begin{proof}
As $\zeta_f$ is rational (but not a polynomial) and does not vanish at $z=0$, it has $k\geq 0$ zeros, say $(\gamma_i)_{1\leq i\leq k}$, with multiplicity $(n_i)_{1\leq i\leq k}\in\NN$ and $\ell\geq 1$ poles, say $(\eta_j)_{1\leq j\leq\ell}$, with multiplicity $(m_j)_{1\leq j\leq\ell}\in\NN$. Hence there is a constant $C$ such that
\[
\zeta_f(z)=C \,\frac{\prod_{i=1}^{k}(z-\gamma_i)^{n_i}}{\prod_{j=1}^{\ell}(z-\eta_j)^{m_j}}.
\]
Taking the logarithmic derivative of both the presentations of the zeta function, we get
\begin{eqnarray*}
\sum_{n=0}^\infty N_{n+1}(f)z^n &=& \sum_{i=1}^{k} \frac{n_i}{z-\gamma_i}-\sum_{j=1}^{\ell}\frac{m_j}{z-\eta_j} \\
&=& \sum_{n=0}^\infty \left(\sum_{j=1}^{\ell}\frac{m_j}{(\eta_j)^{n+1}}- \sum_{i=1}^{k}\frac{n_i}{(\gamma_i)^{n+1}}\right)z^n.
\end{eqnarray*}
And so, collating coefficients with the same degree, we deduce the explicit formula for the number of periodic points with period $n$ of $f$.
\end{proof}

According to \cite{JL} and \cite{Tau}, the set of periods for expanding maps defined on torus or flat compact manifolds are \emph{uniformly cofinite}, that is to say, there is a positive integer $m_0$, which depends only on the dimension of the manifold, such that, for all integers $m\geq m_0$, any expanding map on the manifold has a periodic point whose minimum period is exactly $m$. This means that the poles and zeros of the zeta functions of such maps have to obey strong restrictions to ensure that, for $m \geq m_0$, the difference
\[N_m(f)-\sum_{d|m \, , \, d<m} N_d(f)\]
is positive.

\ms

\noindent \textbf{Remark}: We have considered maps which are continuous and locally uniformly expanding, but these are not necessary conditions for the rationality of the $\zeta$ function. There are examples of maps defined on a closed interval whose $\zeta$-functions are rational, including some which are not continuous (although uniformly expanding) and some simultaneously not continuous and not uniformly expanding. For instance,
\begin{itemize}
\item The map
$$x \in \,[0,1] \mapsto f(x)=2x\, \text{ mod } \,1$$
is locally expanding, as required in the definition of Ruelle-expanding functions (with $c=\lambda=\frac{1}{2}$), but it is not continuous. The corresponding $\zeta$-function is rational because the restriction of the dynamics to the invariant set $D=[0,1]\setminus \{\text{dyadic rational numbers}\}$ is conjugated to a full unilateral subshift of finite type and only a fixed point is left outside of $D$.

\item The map
$$x \in \,[0,1] \mapsto f_s(x)=x+x^{1+s} \, \text{ mod} \,1$$
where $s$ is a positive constant, is not continuous and is not uniformly expanding (it even has a fixed point, at $0$, with first derivative equal to $1$). Nevertheless, its $\zeta$-function is rational because, similarly, there is an invariant domain $E\subseteq [0,1]$ with only a finite number of periodic points outside of it and such that there is a conjugacy between $f_s|_E$ and a restriction of a unilateral subshift of finite type.
\end{itemize}

\section{Entropy}

\subsection{Entropy \emph{vs.} radius of convergence}

Is there any relation between the radius of convergence $\rho$ and $h(f)$ for Ruelle-expanding maps? Indeed, we have
$$\wp(f)\leq h(f)$$
and so $\rho\geq\exp(-h(f))$. To prove this, we will see first how to simplify the computation of $h(f)$ in this context.

\begin{prop}\label{M10}
Let $f:K \rightarrow K$ be a Ruelle-expanding map on a compact metric space $(K,d)$, $\vep$ an expansivity constant for $f$ and $\mathcal{A}$ a finite cover of $K$ by open balls with radius smaller than $\vep/2$. Then:
\begin{itemize}
\item $h(f)=r(\vep_0,K)=s(\vep_0,K)$ for all $\vep_0<\vep/4$.
\item $h(f)=h(f, \mathcal{A})$.
\end{itemize}
\end{prop}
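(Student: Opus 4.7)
The plan is to reduce both statements to the uniform contraction of Bowen balls afforded by Proposition~\ref{M8}. For $\vep'$ small enough that the conclusion of that proposition applies, one has $\overline{B}(N,\vep',x)=g(\overline{B}_{\vep'}(f^N(x)))$ with $g$ a contractive branch of $f^{-N}$ satisfying $g(f^N(x))=x$, and contraction by $\lambda^N$ delivers the uniform inclusion $\overline{B}(N,\vep',x)\subseteq \overline{B}_{\lambda^N\vep'}(x)$. Using item~$(b)$ of that proposition, we may assume throughout that $\vep$ is itself a legitimate input for item~$(a)$, so that both the value $\vep_0<\vep/4$ of the present statement and the radii of balls in $\mathcal{A}$ lie within the applicable range.

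For the first bullet, the chain $r(\vep_0,K)\leq s(\vep_0,K)\leq h(f)$ is immediate from $r_n\leq s_n$, monotonicity in $\vep_0$, and the limit definition of $h(f)$. To close the loop I fix $\eta>0$ and choose $N$ so that $\lambda^N\vep_0<\eta$. Given any $(n+N,\vep_0)$-spanning set $F$ and any $x\in K$, pick $y\in F$ with $d_{n+N}(x,y)\leq\vep_0$; then for every $i\in\{0,\ldots,n-1\}$ the inequalities $d(f^{i+j}(x),f^{i+j}(y))\leq\vep_0$ for $j=0,\ldots,N$ place $f^i(y)$ in $\overline{B}(N,\vep_0,f^i(x))\subseteq \overline{B}_{\lambda^N\vep_0}(f^i(x))$, giving $d(f^i(x),f^i(y))<\eta$. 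Thus $F$ is $(n,\eta)$-spanning, so $r_n(\eta,K)\leq r_{n+N}(\vep_0,K)$; passing to $\limsup$ yields $r(\eta,K)\leq r(\vep_0,K)$, and letting $\eta\to 0$ gives $h(f)\leq r(\vep_0,K)$.

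For the second bullet, $h(f,\mathcal{A})\leq h(f)$ is definitional. For the reverse I would show that $\mathcal{A}$ is a generator: given any open cover $\mathcal{D}$ of $K$ with Lebesgue number $\delta$, a nonempty atom $\bigcap_{i=0}^{N}f^{-i}(A_i)$ with $A_i\in\mathcal{A}$ is, fixing any $z_0$ in it, contained in $\overline{B}(N,\vep,z_0)$, because two points in $f^{-i}(A_i)$ have $i$-th iterates at distance at most $2\,\mathrm{radius}(A_i)<\vep$. Proposition~\ref{M8} places this Bowen ball inside $\overline{B}_{\lambda^N\vep}(z_0)$, and choosing $N$ with $\lambda^N\vep<\delta/2$ forces the atom into some $D\in\mathcal{D}$. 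Hence $\bigvee_{i=0}^{N}f^{-i}\mathcal{A}$ refines $\mathcal{D}$, so $h(f,\mathcal{D})\leq h(f,\bigvee_{i=0}^{N}f^{-i}\mathcal{A})=h(f,\mathcal{A})$; taking the supremum over $\mathcal{D}$ delivers $h(f)\leq h(f,\mathcal{A})$. The only delicate point throughout is keeping the parameters in the regime where Proposition~\ref{M8}(a) applies; the factor of $4$ built into $\vep_0<\vep/4$ and the factor of $2$ built into the ball radii provide exactly the margin that makes this automatic.
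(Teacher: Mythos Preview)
The paper does not actually prove this proposition; it simply cites Walters and remarks that the argument for expansive homeomorphisms adapts to expansive maps. Your argument is precisely that adaptation, so in spirit you are doing exactly what the paper defers.

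There is one technical wrinkle worth flagging. Proposition~\ref{M8}(a) carries its own threshold, namely $\min\{r,\,c/(1+\lambda)\}$, and nothing in the hypotheses of the present proposition forces the given expansivity constant $\vep$ to lie below it. Your sentence ``we may assume throughout that $\vep$ is itself a legitimate input for item~(a)'' amounts to replacing $\vep$ by something smaller; but then the conclusion $h(f)=r(\vep_0,K)$ is established only for $\vep_0$ below one quarter of the \emph{shrunken} constant, not the original one, and likewise the cover $\mathcal{A}$ may have radii between the two thresholds. The clean fix is to replace the appeal to Proposition~\ref{M8}(a) by the purely expansive lemma (proved by a routine compactness--continuity argument, exactly as in Walters): for every $\eta>0$ there exists $N$ such that $\overline{B}(N,\vep,x)\subseteq B_\eta(x)$ for all $x\in K$. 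This holds for \emph{any} expansivity constant $\vep$, delivers the same inclusion you use, and makes both bullets go through verbatim. With that substitution your proof is complete and matches the standard argument the paper invokes.
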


\begin{proof}
See \cite{Wa}. Although the proof in this reference is done for expansive homeomorphisms, it can be easily adapted for expansive maps.
\end{proof}

\ms

Let $p$ and $q$ be periodic points of $f$, with $f^n(p)=p$ and $f^n(q)=q$ for some $n\in\NN$. Then we have
\begin{eqnarray*}
d_n(p,q)\leq\vep_0\Longrightarrow d_n(p,q)\leq\vep&\Longrightarrow& d(f^i(p),f^i(q))\leq\vep,\forall i\in\{0,1,\ldots,n-1\}\\
&\Longrightarrow& d(f^i(p),f^i(q))\leq\vep,\,\,\forall i\in\NN_0\,\,\Longrightarrow p=q.
\end{eqnarray*}
So the set $Per_n(f)$ of periodic points $p$ with $f^n(p)=p$ is a $(n,\vep)$-separated set for $K$. Consequently,

\begin{cor}\label{PerEntr}
If $f:K\rightarrow K$ is a Ruelle-expanding map defined on a compact metric space $(K,d)$, then:
\begin{itemize}
\item $s_n(\vep_0,K)\geq s_n(\vep,K) \geq card\left(Per_n(f)\right)=N_n(f).$
\item $\wp(f)\leq h(f)$.
\end{itemize}
\end{cor}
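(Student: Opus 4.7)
The calculation just above the corollary statement already does the main work, so the plan is essentially to package it properly and then take logs.

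For the first bullet, I would begin by fixing an $n \in \mathbb{N}$ and showing that $\text{Per}_n(f)$ is itself an $(n,\vep)$-separated set. This is exactly the computation displayed immediately before the corollary: if $p,q \in \text{Per}_n(f)$ with $d_n(p,q) \le \vep$, then $d(f^i(p),f^i(q)) \le \vep$ for $i \in \{0,\ldots,n-1\}$; by periodicity this bound extends to all $i \in \mathbb{N}_0$, so expansivity with constant $\vep$ (available by Proposition~\ref{M8}(b), since $\vep < \vep_0 < r$ was chosen as an expansivity constant in Proposition~\ref{M10}) forces $p=q$. Hence $N_n(f) = \text{card}(\text{Per}_n(f)) \le s_n(\vep,K)$. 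The remaining inequality $s_n(\vep_0,K) \ge s_n(\vep,K)$ is just the monotonicity of $s_n(\cdot,K)$ in the radius, which was noted in the subsection on topological entropy, applied to $\vep_0 < \vep/4 < \vep$.

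For the second bullet, I would take $\log$, divide by $n$, and pass to the limsup in the chain $N_n(f) \le s_n(\vep_0,K)$. This gives
\[
\wp(f) = \limsup_{n \to \infty} \frac{\log(\max\{N_n(f),1\})}{n} \le \limsup_{n \to \infty} \frac{1}{n}\log s_n(\vep_0,K) = s(\vep_0,K).
\]
By Proposition~\ref{M10}, since $\vep_0 < \vep/4$ and $\vep$ is an expansivity constant, one has $s(\vep_0,K) = h(f)$, which finishes the proof. (One small point of care: if $N_n(f) = 0$ for some $n$ the $\max$ in the definition of $\wp(f)$ makes the corresponding term vanish, so the inequality is trivial there; for $N_n(f) \ge 1$ the bound above applies directly.)

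There is no real obstacle: both conclusions are immediate consequences of the pre-stated lemma (the separated-set characterization) combined with the fact that an expansivity constant of $f$ witnesses the topological entropy via $s(\vep_0,K)$. The only thing worth double-checking is that the $\vep$ used to assert $d(f^i(p),f^i(q))\le\vep \;\forall i \;\Rightarrow\; p=q$ is genuinely an expansivity constant; this is ensured by choosing $\vep < \vep_0$ given by Proposition~\ref{M8}(b), exactly as the authors set up in the paragraph preceding the corollary.
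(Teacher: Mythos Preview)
Your proof is correct and follows exactly the paper's approach: the paragraph preceding the corollary already shows that $\text{Per}_n(f)$ is $(n,\vep)$-separated, and you correctly combine this with the monotonicity of $s_n(\cdot,K)$ and Proposition~\ref{M10} to obtain both bullets. One small correction: you twice write ``$\vep < \vep_0$'' when justifying that $\vep$ is an expansivity constant, but in the setup of Proposition~\ref{M10} and this corollary the convention is the reverse --- $\vep$ is the given expansivity constant and $\vep_0 < \vep/4$ (you yourself use this correctly in the monotonicity step); the parentheticals appear to conflate this $\vep_0$ with the unrelated threshold $\vep_0$ from Proposition~\ref{M8}(b), so simply drop those remarks and take ``$\vep$ is an expansivity constant'' as the standing hypothesis.
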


\subsection{Entropy \emph{vs.} pre-images}

The entropy of $f$ is also related with the number of pre-images of the points in $K$ for $f$.

\begin{prop}\label{M11}
If $(K,d)$ is a compact metric space and $f:K\rightarrow K$ is a Ruelle-expanding map, then there is some $k\in\NN$ such that $card(f^{-1}(\{x\}))\leq k,\forall x\in K$.
\end{prop}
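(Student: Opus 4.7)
The plan is to exploit the first defining property of a Ruelle-expanding map, which asserts the existence of a uniform constant $c>0$ such that any two distinct preimages of the same point are at distance strictly greater than $c$. Once this is observed, the result is a direct compactness argument: a uniformly separated subset of a compact metric space cannot have arbitrarily many elements.

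More concretely, I would proceed as follows. Fix $x\in K$ and consider the fiber $f^{-1}(\{x\})$. If $a,b\in f^{-1}(\{x\})$ with $a\neq b$, then by definition of Ruelle-expanding we have $d(a,b)>c$. Hence $f^{-1}(\{x\})$ is a $c$-separated subset of $K$. Now, since $K$ is compact, we may cover it by finitely many open balls of radius $c/2$, say $B_{c/2}(x_1),\ldots,B_{c/2}(x_N)$ (for instance, one can take $N=r_1(c/2,K)$). The triangle inequality shows that each such ball contains at most one element of any $c$-separated set, because two points inside $B_{c/2}(x_i)$ are at distance at most $c$ from each other. Consequently $\text{card}(f^{-1}(\{x\}))\leq N$.

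Since the constant $c$ is the same for every point of $K$ (it is built into the definition of Ruelle-expanding), the bound $N$ does not depend on $x$. Setting $k=N$ gives the required uniform bound $\text{card}(f^{-1}(\{x\}))\leq k$ for all $x\in K$.

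There is really no main obstacle here: the proof is a one-line application of the separation condition combined with compactness. The only subtlety worth emphasizing is that the separation constant $c$ is uniform in $x$, which is exactly what allows the finite cover by $c/2$-balls to produce a bound independent of the fiber.
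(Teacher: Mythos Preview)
Your argument is correct and essentially the same as the paper's: both exploit the uniform $c$-separation of distinct preimages together with compactness. The only cosmetic difference is that the paper phrases the conclusion using the language of $(1,c)$-separated sets, taking $k=s_1(c,K)$, whereas you bound via a finite cover by $c/2$-balls, yielding $k=r_1(c/2,K)$; since $s_1(c,K)\leq r_1(c/2,K)$, the paper's bound is marginally sharper, but the underlying idea is identical.
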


\begin{proof}
If we set $E=f^{-1}(\{x\})$, then
$$f(u)=f(v)=x,\,\,\forall u,v\in E,\,\,u\neq v,$$
thus $d_1(u,v)=d(u,v)>c$ and $E$ is a $(1,c)$-separated set. Since $card(E)\leq s_1(c,K)<\infty$, we can take $k=s_1(c,K)$.
\end{proof}

\begin{cor}\label{LOGK}
$h(f)\leq \log k$, so $0\leq \wp(f)\leq \log k$ and $1/k\leq\rho\leq 1.$
\end{cor}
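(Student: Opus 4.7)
The plan is to first establish $h(f)\leq \log k$ via a spanning-set estimate, then read off the bounds on $\wp(f)$ and $\rho$ using the relations already assembled in the excerpt.

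For the entropy bound, I would exploit Proposition~\ref{M8}(a), which says that for any $\varepsilon<\varepsilon_0$ and any $x\in K$ the dynamical ball $B(n-1,\varepsilon,x)$ is precisely the image of $B_\varepsilon(f^{n-1}(x))$ under a contractive branch of $f^{-(n-1)}$. Fix such an $\varepsilon$ and let $F\subseteq K$ be a minimal $\varepsilon$-spanning set for $K$ with respect to $f$, so $|F|=r_1(\varepsilon,K)$. Define
\[
E_n \;=\; \bigcup_{y\in F} f^{-(n-1)}(\{y\}).
\]
By Proposition~\ref{M11}, each fibre $f^{-(n-1)}(\{y\})$ has at most $k^{n-1}$ points, hence $|E_n|\leq r_1(\varepsilon,K)\,k^{n-1}$.

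Next I would check that $E_n$ is $(n,\varepsilon)$-spanning. Given $x\in K$, choose $y\in F$ with $d(f^{n-1}(x),y)\leq\varepsilon$ and let $g:B_r(f^{n-1}(x))\to K$ be the contractive branch of $f^{-(n-1)}$ with $g(f^{n-1}(x))=x$. Then $z:=g(y)\in f^{-(n-1)}(\{y\})\subseteq E_n$, and for every $0\leq j\leq n-1$,
\[
d(f^j(z),f^j(x)) \;=\; d\bigl((f^j\circ g)(y),(f^j\circ g)(f^{n-1}(x))\bigr) \;\leq\; \lambda^{n-1-j}\,\varepsilon \;\leq\; \varepsilon,
\]
by the contractive-branch estimate (Definition~\ref{contractive branch}). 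Thus $d_n(x,z)\leq\varepsilon$, which gives $r_n(\varepsilon,K)\leq r_1(\varepsilon,K)\,k^{n-1}$. Taking $\limsup$ in $n$ yields $r(\varepsilon,K)\leq \log k$, and letting $\varepsilon\to 0$ gives
\[
h(f) \;=\; h(f,K) \;\leq\; \log k,
\]
where the first equality uses Proposition~\ref{M1} (since $K$ is compact).

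The remaining steps are immediate. Corollary~\ref{PerEntr} yields $\wp(f)\leq h(f)\leq \log k$, and Proposition~\ref{M9} ensures $\operatorname{Per}(f)\neq\emptyset$, whence $\wp(f)\geq 0$. Since $\rho=\exp(-\wp(f))$, both inequalities $1/k\leq\rho\leq 1$ follow at once. The only nontrivial step is the spanning-set construction above, and the only delicate point there is checking that the contractive branch $g$ is defined on all of $B_\varepsilon(f^{n-1}(x))$ and that the iterated contraction gives $\varepsilon$ (not just $\varepsilon_0$) as the dynamical radius — both of which are precisely guaranteed by Proposition~\ref{M8}(a) once $\varepsilon<\varepsilon_0$.
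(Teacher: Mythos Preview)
Your proof is correct and essentially identical to the paper's: both pick a finite $\varepsilon$-cover $F$ of $K$, use contractive branches of $f^{-n}$ to show that the preimage set $f^{-n}(F)$ (or $f^{-(n-1)}(F)$ in your version) is $(n,\varepsilon)$-spanning with cardinality at most $k^n|F|$, and conclude $h(f)\leq\log k$. The only cosmetic difference is that the paper fixes a single $\varepsilon_0<\min\{\varepsilon/4,c,r\}$ and invokes Proposition~\ref{M10} to identify $h(f)=r(\varepsilon_0,K)$ directly, whereas you let $\varepsilon\to0$ via Proposition~\ref{M1}; the consequence for $\wp(f)$ and $\rho$ is derived in exactly the same way.
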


\begin{proof}
Let $\vep_0<min\{\vep/4,c,r\}$. Since $K$ is compact, there is a finite set $F$ for which we can write
\[K=\bigcup_{y\in F}\overline{B}_{\vep_0}(y).\]
Given $x\in K$ and $n\in\NN$, let $y\in F$ be such that $d(f^n(x),y)\leq\vep_0$ and let $g:B_r(f^n(x))\rightarrow K$ be a contractive branch of $f^{-n}$ with $g(f^n(x))=x$. If we take $z=g(y)$, we get
$$f^n(z)=f^n(g(y))=y\Longrightarrow z\in f^{-n}(F)$$
and
\begin{eqnarray*}
d(f^i(x),f^i(z)) &=& d(f^i(g(f^n(x))),f^i(g(y))) \\
&\leq& \lambda^{n-i}d(f^n(x),y) \\
&\leq& \lambda^{n-i}\vep_0 \\
&\leq& \vep_0,\,\,\,\,\,\forall i\in\{0,1,\ldots,n-1\} \\
&\Longrightarrow& d_n(x,z)\leq\vep_0.
\end{eqnarray*}
So $f^{-n}(F)$ is a $(n,\vep_0)$-spanning set for $K$. Therefore
$$r_n(\vep_0,K)\leq card(f^{-n}(F))\leq k^n card(F),\,\,\,\forall n\in\NN$$
and we deduce that
\begin{eqnarray*}
h(f) &=& r(\vep_0,K) \\
&=& \limsup(1/n)\log r_n(\vep_0,K) \\
&\leq& \limsup(1/n)\log(k^n card(F)) \\
&=& \limsup(\log k+(1/n)\log(card(F))) \\
&=& \log k.
\end{eqnarray*}
So, $0\leq \wp(f)\leq \log k$ and $1/k\leq\rho\leq 1.$
\end{proof}

\begin{cor}
If there exists for some $k\in\NN$ such that $card(f^{-1}(\{x\}))=k$ for all $x\in K$, then $h(f)=\log k$.
\end{cor}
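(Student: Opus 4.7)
The upper bound $h(f)\le \log k$ is already delivered by Corollary~\ref{LOGK}, since the hypothesis is exactly the case of equality in Proposition~\ref{M11}. The whole task is therefore to produce the reverse inequality $h(f)\ge \log k$, and the natural way to do this is to exhibit $(n,\varepsilon)$-separated sets of cardinality at least $k^n$ for some fixed small $\varepsilon>0$.

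My plan is to take any point $x\in K$ and use its full set of $n$-th preimages $E_n:=f^{-n}(\{x\})$ as a separated set. Under the hypothesis, $|E_n|=k^n$. Fix an expansivity constant $\varepsilon$ for $f$ (which exists by Proposition~\ref{M8}(b)) and let $\varepsilon_0<\min\{c,\varepsilon/4\}$, where $c>0$ is the constant appearing in Definition~\ref{R-expanding}. I claim $E_n$ is $(n,\varepsilon_0)$-separated. Indeed, given $z_1\neq z_2$ in $E_n$, both satisfy $f^n(z_i)=x$, so there is a smallest $t\in\{1,\dots,n\}$ with $f^t(z_1)=f^t(z_2)$; then $f^{t-1}(z_1)\neq f^{t-1}(z_2)$ while $f(f^{t-1}(z_1))=f(f^{t-1}(z_2))$, and the first bullet of Definition~\ref{R-expanding} yields
\[
d_n(z_1,z_2)\ge d\bigl(f^{t-1}(z_1),f^{t-1}(z_2)\bigr)>c>\varepsilon_0.
\]

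Consequently $s_n(\varepsilon_0,K)\ge |E_n|=k^n$ for every $n\in\NN$, so
\[
s(\varepsilon_0,K)=\limsup_{n\to\infty}\frac{1}{n}\log s_n(\varepsilon_0,K)\ge \log k.
\]
Since $\varepsilon_0<\varepsilon/4$, Proposition~\ref{M10} gives $h(f)=s(\varepsilon_0,K)\ge \log k$, and combined with Corollary~\ref{LOGK} this closes the argument.

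I do not foresee a genuine obstacle: the only thing that could go wrong is a mismatch between strict and non-strict inequalities in the definition of a separated set, but the condition $d(u,v)>c$ in Definition~\ref{R-expanding} is strict, so choosing $\varepsilon_0$ strictly less than $c$ dispatches this. The conceptual point worth highlighting in the write-up is that the first clause of Ruelle-expansion propagates backwards along branches of $f^{-n}$: distinct preimages of a common point are automatically separated in the $d_n$-metric by the same uniform constant $c$, independently of $n$.
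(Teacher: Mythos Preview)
Your proposal is correct and follows essentially the same route as the paper: both use $E_n=f^{-n}(\{x\})$ as an $(n,\varepsilon_0)$-separated set of cardinality $k^n$, invoking the first clause of Definition~\ref{R-expanding} at the first index where the two orbits coalesce, and then conclude via Proposition~\ref{M10} and Corollary~\ref{LOGK}. The only cosmetic difference is that the paper phrases the ``smallest $t$'' argument as a step-by-step case analysis, while you state it directly; the content is identical.
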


\begin{proof}
Fix $x\in K$ and take $E_n=f^{-n}(\{x\})$; then we have $f^n(u)=f^n(v)=x,\forall u,v\in E_n,u\neq v$. If $f(u)=f(v)$, then $d_n(u,v)\geq d(u,v)>c$; otherwise, we have $f(u)\neq f(v)$. Admitting the latter, if $f^2(u)=f^2(v)$, then $d_n(u,v)\geq d(f(u),f(v))>c$, otherwise, we have $f^2(u)\neq f^2(v)$. Proceeding this way, since we have $f^n(u)=f^n(v)$, there must be some $j\in\{1,\ldots,n\}$ for which $f^j(u)=f^j(v)$ and $f^{j-1}(u)\neq f^{j-1}(v)$, so $d_n(u,v)\geq d(f^{j-1}(u),f^{j-1}(v))>c$ and $E_n$ is a $(n,c)$-separated set. Since $card(E_n)=k^n$, we have $k^n\leq s_n(c,K)\leq s_n(\vep_0,K)$ and therefore we get
\[h(f)=s(\vep_0,K)=\limsup(1/n)\log s_n(\vep_0,K)\geq\limsup(1/n)\log(k^n)=\log k\]
which, with the estimate of the previous Corollary, allow us to conclude that, in this particular case, $h(f)=\log k$.
\end{proof}

\begin{exe}
\em Let $M$ be a compact Riemannian manifold and $f:M\rightarrow M$ an H\"{o}lder $\mathcal{C}^1$ expanding map. Then $card(f^{-1}(x))$ is independent of $x\in M$; it is called the \emph{degree} of $f$ and denoted by $deg(f)$. Moreover, as we have seen, $f$ is Ruelle-expanding. So $h(f)=\log(deg(f))$.
\end{exe}

\section{Proof of Theorem~\ref{MainTheorem1}}\label{Rational}

Our aim now is to prove the rationality of the zeta function for Ruelle-expanding maps. Recall that the existence of a Markov partition was an essential ingredient in the proof of the rationality of the zeta function for $\mathcal{C}^1$ diffeomorphisms defined on a hyperbolic set with local product structure. In the case of Ruelle-expanding maps, we will prove the existence of finite covers with analogous properties, which will play the same role the Markov partitions did.

\begin{prop}\label{M12}
Let $f$ be a Ruelle-expanding map defined on a compact set $K$. Denote by $\vep$ an expansivity constant for $f$. Then $K$ has a finite cover $\{R_1,...,R_k\}$ with the following properties:
\begin{itemize}
\item Each $R_i$ has a diameter less than $min\{\vep,c/2\}$.
\item Each $R_i$ is proper, that is to say, it is equal to the closure of its interior.
\item $\stackrel{\circ}{R_i}\cap\stackrel{\circ}{R_j}=\emptyset,\,\,\,\forall i,j\in[k],\,\,i\neq j.$
\item $f(\stackrel{\circ}{R_i})\cap\stackrel{\circ}{R_j}\neq\emptyset\,\,\Longrightarrow \,\,\stackrel{\circ}{R_j}\subseteq f(\stackrel{\circ}{R_i}).$
\end{itemize}
\end{prop}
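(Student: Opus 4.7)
The plan is to build the cover through a Bowen-type construction: first code a large portion of $K$ by a unilateral subshift using a fine net together with the Shadowing Lemma, and then recover proper sets with pairwise disjoint interiors and the Markov overlap property by passing to closures of suitable interiors. The controlling length scale will be some $\delta$ chosen much smaller than $\min\{\vep, c/2, r\}$, where $r$ is the radius of contractive branches from Definition~\ref{R-expanding}.

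Concretely, I would fix such a $\delta$, pick a $\delta$-dense finite set $\{p_1,\ldots,p_k\} \subset K$, and declare a sequence $(a_n)_{n \in \NN_0} \in [k]^{\NN_0}$ admissible when $d(f(p_{a_n}), p_{a_{n+1}}) < 2\delta$ for every $n$. Iterating the contractive branches from Definition~\ref{R-expanding}, as in Proposition~\ref{M8}, yields a Shadowing Lemma: every admissible sequence is shadowed by a unique point of $K$, giving a continuous surjection $\pi : \Sigma_A^+ \to K$ from the corresponding unilateral subshift of finite type onto $K$. Setting $T_i = \pi(\{\underline{a} : a_0 = i\})$ produces a compact cover of $K$ by sets of diameter at most $2\delta < \min\{\vep, c/2\}$ that verifies a set-theoretic Markov relation: whenever $f(T_i) \cap T_j \neq \emptyset$ one must have $A_{ij}=1$, and shadowing then forces $T_j \subseteq f(T_i)$. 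I would next define $R_i$ by applying a closure-of-interior procedure to the $T_i$, using a fixed tie-breaking rule on boundary points so that the interiors $\stackrel{\circ}{R_i}$ become pairwise disjoint and the $R_i$ are proper by construction.

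The main technical obstacle is reconciling all four conditions simultaneously: the shadowing-defined sets $T_i$ overlap on boundary pieces, and a naive interior/closure passage can disrupt property (iv). To preserve it, I would exploit the fact that $f$ is open (a consequence of the contractive-branch definition) and that its inverse branches are uniform contractions, so $f$ carries interiors to interiors and the inverse branches carry boundaries to boundaries; assigning each boundary point to a single rectangle via one inverse-branch-consistent rule keeps the Markov inclusion intact after taking closures. The diameter bound being less than $c/2$ is essential at this stage, since together with the hypothesis in Definition~\ref{R-expanding} that distinct preimages of any point are $c$-separated, it guarantees that at most one inverse branch of $f$ can carry a given $R_j$ back into $R_i$; this ensures that the overlap-to-inclusion relation transfers unambiguously from the $T_i$ to the final $R_i$, yielding the required cover.
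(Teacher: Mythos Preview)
Your overall architecture matches the paper's: build a semiconjugacy $\theta:\Sigma_A^+\to K$ via shadowing over a fine net, set $T_i=\theta(\{a_0=i\})$, and then refine. The Markov inclusion $A_{ij}=1\Rightarrow T_j\subseteq f(T_i)$ (and its interior version, using that contractive branches are local homeomorphisms) is exactly Lemma~\ref{M15}. So far so good.

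The gap is in the refinement step. The sets $T_i$ do not merely share boundary points: since several $p_i$ can lie within $\delta$ of the same $x$, the $T_i$ typically overlap on open regions, so no ``tie-breaking rule on boundary points'' or closure-of-interior of the individual $T_i$ will produce pairwise disjoint interiors. What the paper does instead is pass to the \emph{common refinement}: on the open dense set $Z=K\setminus\bigcup_i\partial T_i$ one defines, for each $x\in Z$,
\[
R(x)=\bigcap_{i:\,x\in\stackrel{\circ}{T_i}}\stackrel{\circ}{T_i}\ \cap\ \bigcap_{i:\,x\notin T_i}(K\setminus T_i),
\]
obtaining finitely many open sets that are either equal or disjoint, and then takes $R_j=\overline{R(x)}$. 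Properness and disjoint interiors follow formally. The Markov property does \emph{not} follow from the $T$-level inclusion alone; one needs the additional fact (Lemma~\ref{M16}) that for a contractive branch $g$ with $g(f(x))=x$ one has $g(R(f(x)))\subseteq R(x)$, i.e.\ membership in each $T_i$ is preserved in both directions under $g$. That lemma is where the diameter bound $<c/2$ is actually used (to identify $g(y)$ with the pre-image coming from $T_i$), and it is what transfers property (iv) from the $T_i$ to the refined $R_j$. Your sketch gestures at this with ``inverse-branch-consistent rule'', but without the common refinement and the analogue of Lemma~\ref{M16} the argument does not close. Also, the implication $f(T_i)\cap T_j\neq\emptyset\Rightarrow A_{ij}=1$ that you state is not needed and is not obviously true with your choice of constants; the paper only uses the direction $A_{ij}=1\Rightarrow T_j\subseteq f(T_i)$.
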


\ms

\noindent \textbf{Remark}: If $\stackrel{\circ}{R_j}\subseteq f(\stackrel{\circ}{R_i})$, then $R_j=\overline{\stackrel{\circ}{R_j}}\subseteq \overline{f(\stackrel{\circ}{R_i})}\subseteq f\left(\overline{\stackrel{\circ}{R_i}}\right)=f(R_i)$ and the last condition implies that $f(\stackrel{\circ}{R_i})\cap\stackrel{\circ}{R_j}\neq\emptyset\Longrightarrow R_j\subseteq f(R_i)$.

\bigskip

To prove this Proposition, we will begin by a Shadowing Lemma. Given $\alpha > 0$ and a map $f:K\rightarrow K$, we say that the sequence $(x_n)_{n\in\NN_0}$ is an $\alpha$-pseudo orbit if, for any $n\in\NN_0$, we have $d(f(x_n),x_{n+1})<\alpha$. This sequence admits a $\beta$-shadow in $K$, for some $\beta > 0$, if there exists a point $x\in K$ such that $d(f^n(x),x_n)<\beta$ for all $n\in\NN_0$.

\begin{lem}\label{M13}
Let $f:K\rightarrow K$ be Ruelle-expanding defined on a compact space $K$. For any $\beta\in\,\,]0,r[$ there is $\alpha>0$ such that, if $(x_n)_{n\in\NN_0}$ is an $\alpha$-pseudo orbit in $K$, then it admits a $\beta$-shadow in $K$. Besides, the $\beta$-shadow is unique if $\beta<\vep/2$, where $\vep$ is an expansivity constant for $f$.
\end{lem}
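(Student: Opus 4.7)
The plan is to use the contractive branches from Definition \ref{R-expanding} to correct the jumps of the pseudo-orbit backwards in time: for each finite window $\{0,1,\ldots,N\}$ I will build an honest orbit segment $y_0^N,y_1^N,\ldots,y_N^N$ that tracks $(x_n)$ within a fixed distance $c^*<\beta$, and then extract the shadow by a compactness argument on the sequence $(y_0^N)_N$ in $K$.

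Concretely, I would choose $\alpha>0$ with $\alpha<r(1-\lambda)$ and $\alpha<(1-\lambda)\beta/\lambda$, and set $c^*=\lambda\alpha/(1-\lambda)$, so that $c^*+\alpha<r$ and $c^*<\beta$. Given an $\alpha$-pseudo orbit $(x_n)_{n\in\NN_0}$ and $N\in\NN_0$, set $y_N^N:=x_N$ and build the $y_n^N$ by reverse induction on $n$. Assuming $y_{n+1}^N$ has been produced with $d(y_{n+1}^N,x_{n+1})\leq c^*$, the pseudo-orbit bound gives
$$d(y_{n+1}^N,f(x_n))\leq d(y_{n+1}^N,x_{n+1})+d(x_{n+1},f(x_n))<c^*+\alpha<r,$$
so $y_{n+1}^N\in B_r(f(x_n))$. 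Let $\phi:B_r(f(x_n))\rightarrow K$ be the contractive branch from Definition \ref{R-expanding} with $\phi(f(x_n))=x_n$, and set $y_n^N:=\phi(y_{n+1}^N)$. Then $f(y_n^N)=y_{n+1}^N$ and the contraction property yields
$$d(y_n^N,x_n)=d(\phi(y_{n+1}^N),\phi(f(x_n)))\leq\lambda\,d(y_{n+1}^N,f(x_n))\leq\lambda(c^*+\alpha)=c^*,$$
which closes the induction.

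Since $f^n(y_0^N)=y_n^N$ for $0\leq n\leq N$, one has $d(f^n(y_0^N),x_n)\leq c^*$ on that range. Compactness of $K$ provides a subsequence $y_0^{N_k}\rightarrow x\in K$; for each fixed $n$, continuity of $f^n$ gives $d(f^n(x),x_n)\leq c^*<\beta$, so $x$ is the desired $\beta$-shadow of $(x_n)$. For the uniqueness claim, if $\beta<\vep/2$ and $x,x'$ are both $\beta$-shadows, then $d(f^n(x),f^n(x'))\leq d(f^n(x),x_n)+d(x_n,f^n(x'))<2\beta<\vep$ for every $n\in\NN_0$, so the expansivity hypothesis on $\vep$ forces $x=x'$.

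The only delicate point I expect is the calibration of $\alpha$: the accumulated error $c^*=\lambda\alpha/(1-\lambda)$ coming from the geometric series of contractions must simultaneously remain below the radius $r$ on which the contractive branches are defined (so that each inductive step is legal) and strictly below the target tolerance $\beta$ (so that the limit genuinely shadows). Because $\lambda<1$ and $\beta<r$, both constraints can be met at once by shrinking $\alpha$, after which the backward induction and the compactness extraction run without further obstacle.
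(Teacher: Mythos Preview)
Your proof is correct and follows essentially the same approach as the paper: a backward induction along finite windows using the contractive branches of Definition~\ref{R-expanding}, followed by a compactness extraction of the limit shadow, and the identical uniqueness argument via expansivity. The only cosmetic difference is that you track the error with the fixed point $c^*=\lambda\alpha/(1-\lambda)$ of $t\mapsto\lambda(t+\alpha)$, whereas the paper runs the induction with the looser invariant $d(y_k,x_k)<\beta$ and instead passes through a $\beta/2$-shadow before the limit; both calibrations impose the same constraint $\alpha<\frac{1-\lambda}{\lambda}\beta$.
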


\begin{proof}
Firstly we will prove this statement for finite $\alpha$-pseudo orbits. Let $\beta\in\,\,]0,r[$ and $(x_0,x_1,\ldots,x_n)$ be such that $d(f(x_{k-1}),x_k)<\alpha,\forall k\in[n]$, for some $\alpha>0$. If $y_n=x_n$, then $d(y_n,x_n)=0<\beta$. Now, suppose that $d(y_k,x_k)<\beta$ for $k\in[n]$. Since $d(f(x_{k-1}),x_k)<\alpha$, we have $d(y_k,f(x_{k-1}))<\alpha+\beta<r$, if we assume that $\alpha<r-\beta$. Then we can take $y_{k-1}=g(y_k)$, where $g:B_r(f(x_{k-1}))\rightarrow K$ is a contractive branch of $f^{-1}$ with $g(f(x_{k-1}))=x_{k-1}$; thus we get $d(y_{k-1},x_{k-1})\leq \lambda d(y_k,f(x_{k-1}))<\lambda (\alpha+\beta)<\beta$, if we assume that $\alpha<\frac{1-\lambda}{\lambda}\beta$. Also, notice that $y_k=f(y_{k-1}),\forall k\in[n]$, so that $y_k=f^k(x),\forall k\in[n]$, for $x=y_0$. Hence, it is enough to take $\alpha<\min\{r-\beta,\frac{1-\lambda}{\lambda}\beta\}$.

Now, take $\beta\in]0,r[$ and let $(x_n)_{n\in\NN_0}$ be an $\alpha$-pseudo orbit, with $\alpha<\min\{\frac{r-\beta}{2},\frac{1-\lambda}{2\lambda}\beta\}$. Let $z_n$ be a $\beta/2$-shadow of $(x_0,x_1,\ldots,x_n)$; since $K$ is compact, there is some subsequence $(z_{n_k})_k$ converging to a point $z\in K$. As $d(f^i(z_{n_k}),x_i)<\beta/2,\forall i\in\{0,1,\ldots,n_k\}$, we deduce that, for fixed $i\in\NN_0$,
$$d(f^i(z),x_i)=\lim_{k \rightarrow +\infty} d(f^i(z_{n_k}),x_i)\leq\beta/2<\beta,$$
and so $z$ is a $\beta$-shadow of $(x_n)_{n\in\NN_0}$.

Concerning the uniqueness of the $\beta$-shadow when $\beta<\vep/2$, suppose that $z$ and $z'$ are both $\beta$-shadows of $(x_n)_{n\in\NN_0}$. Then we have $$d(f^i(z),f^i(z'))\leq d(f^i(z),x_i)+d(x_i, f^i(z'))<2\beta<\vep$$
for all $i\in\NN_0$, and so $z=z'$.
\end{proof}

\ms

In particular,

\begin{cor}\label{PER}
Let $f:K\rightarrow K$ be a Ruelle-expanding map defined on a compact metric space $K$, with an expansivity constant $\vep$. For any $\beta<\vep/2$, there is $\alpha_\beta >0$ such that, if $x \in K$ verifies $d(f^p(x),x)< \alpha_\beta$, then there exists a unique periodic point $z \in K$ such that $f^p(z)=z$ and $d(f^j(x),f^j(z))<\beta$ for all $0\leq j \leq p$.
\end{cor}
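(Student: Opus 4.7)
The plan is to apply the Shadowing Lemma \ref{M13} to the $p$-periodic pseudo-orbit generated by repeating the finite segment $x, f(x), \ldots, f^{p-1}(x)$ indefinitely. Given $\beta < \vep/2$, I would first fix an auxiliary $\beta' \in \,]0,\beta/2[$, which is still below $\vep/2$ so that the uniqueness clause of Lemma \ref{M13} applies, and let $\alpha'$ be the threshold the Shadowing Lemma supplies for shadowing radius $\beta'$. Setting $\alpha_\beta = \min\{\alpha', \beta/2\}$ ensures that the drift of the pseudo-orbit plus the shadow error stays strictly below $\beta$.

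Given $x \in K$ with $d(f^p(x), x) < \alpha_\beta$, define $x_n = f^{n \bmod p}(x)$ for $n \in \NN_0$. Then $(x_n)$ is an $\alpha_\beta$-pseudo-orbit: whenever $n + 1 \not\equiv 0 \pmod p$ the identity $f(x_n) = x_{n+1}$ is exact, while when $n + 1 \equiv 0 \pmod p$ the gap equals $d(f^p(x), x) < \alpha_\beta$. Lemma \ref{M13} then produces a unique $z \in K$ satisfying $d(f^n(z), x_n) < \beta'$ for every $n \geq 0$.

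The key step, and what I expect to be the main subtle point, is to upgrade $z$ to a periodic point of period $p$. Since $(x_n)$ is itself $p$-periodic, the translated sequence $(f^{n+p}(z))_n$ also $\beta'$-shadows $(x_n)$, and the uniqueness clause of Lemma \ref{M13}, available because $\beta' < \vep/2$, forces $f^p(z) = z$. Consequently, for $0 \leq j \leq p-1$ one has $d(f^j(x), f^j(z)) = d(x_j, f^j(z)) < \beta' < \beta$, and for $j = p$ the equality $f^p(z) = z$ gives $d(f^p(x), f^p(z)) \leq d(f^p(x), x) + d(x, z) < \alpha_\beta + \beta' \leq \beta$. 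Uniqueness of $z$ among all periodic points satisfying the stated estimate follows by the same token: any competitor $\tilde z$ with $f^p(\tilde z) = \tilde z$ and $d(f^j(x), f^j(\tilde z)) < \beta$ on $0 \leq j \leq p-1$ extends by periodicity to a $\beta$-shadow of $(x_n)$, and the bound $\beta < \vep/2$ forces $\tilde z = z$.
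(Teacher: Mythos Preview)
Your proof is correct and follows essentially the same route as the paper: form the periodic $\alpha$-pseudo-orbit $(f^{n \bmod p}(x))_{n}$, apply Lemma~\ref{M13} to obtain a unique shadow $z$, and then use expansivity (invoked directly in the paper, and via the uniqueness clause of Lemma~\ref{M13} in your version) to deduce $f^p(z)=z$. Your auxiliary choice of $\beta'<\beta/2$ and $\alpha_\beta\leq\beta/2$ is a harmless refinement that makes the strict inequality at $j=p$ explicit, a point the paper's terser argument leaves implicit.
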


\begin{proof}
Define $x_i=f^k(x)$ for $i\equiv k \text{ mod } p$, where $k \in [0,p[$. Then $(x_i)_{i \in \mathbb{N}_0}$ is an $\alpha$-pseudo orbit. If $z$ is the (unique) $\beta$ shadow of it, then, for all $i \in \mathbb{N}_0$,
$$d(f^i(z),f^i(f^p(z)))\leq d(f^i(z),x_i)+d(x_i, f^{i+p}(z))= d(f^i(z),x_i)+d(x_{i+p}, f^{i+p}(z))<2\beta<\vep$$
and so, by the expansivity of $f$, we obtain $f^p(z)=z$.
\end{proof}

Fix $\vep$ be an expansivity constant for $f$ with $\vep<r$ and some $\beta<\min\{\vep/2,c/4\}$. Let $\alpha$ be given by Lemma~\ref{M13} and $\gamma\in\,\,]0,\alpha/2[$ be such that
$$d(x,y)<\gamma\,\,\,\Rightarrow \,\,\,d(f(x),f(y))<\alpha/2,\,\,\,\,\forall x,y\in K.$$
Since $K$ is compact, we can take $\{p_1,\ldots,p_k\}$ such that $K=\bigcup_{i=1}^k B_{\gamma}(p_i)$. We define a matrix $A\in M_k$ by

\begin{center}
$A_{ij}=1\,\,\,$ if $\,\,\,d(f(p_i),p_j)<\alpha$ $\,\,\,\,$ and $\,\,\,\,A_{ij}=0$ otherwise.\\
\end{center}
For every $\underline{a}\in\Sigma^+_A$, the sequence $(p_{a_i})_{i\in\NN_0}$ is an $\alpha$-pseudo orbit, so it admits a unique $\beta$-shadow which we will denote by $\theta(\underline{a})$. In this way we have defined a map $\theta:\Sigma_A^+\rightarrow K$ verifying:

\begin{lem}\label{M14}
$\theta$ is a semiconjugacy between $\sigma^+_A$ and $f$.
\end{lem}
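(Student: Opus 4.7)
To show that $\theta$ is a semiconjugacy I need to verify three things: the intertwining relation $f\circ\theta=\theta\circ\sigma_A^+$, continuity of $\theta$, and surjectivity of $\theta$. The well-definedness of $\theta$ is already handled by the uniqueness clause of Lemma~\ref{M13}, which applies because $\beta<\vep/2$.

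For the intertwining relation, fix $\underline{a}\in\Sigma_A^+$ and set $z=\theta(\underline{a})$, so that $d(f^i(z),p_{a_i})<\beta$ for all $i\in\NN_0$. Reindexing $i\mapsto i+1$ gives
$$d\bigl(f^i(f(z)),p_{a_{i+1}}\bigr)<\beta,\qquad \forall\,i\in\NN_0,$$
which expresses that $f(z)$ is a $\beta$-shadow of the pseudo-orbit $(p_{a_{i+1}})_{i\in\NN_0}$ associated to $\sigma_A^+(\underline{a})$. By the uniqueness part of Lemma~\ref{M13}, $f(\theta(\underline{a}))=\theta(\sigma_A^+(\underline{a}))$.

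For continuity I will use compactness of $K$. Let $\underline{a}^{(m)}\to\underline{a}$ in $\Sigma_A^+$; it suffices to prove that every accumulation point $z$ of $(\theta(\underline{a}^{(m)}))_m$ coincides with $\theta(\underline{a})$. Pass to a subsequence so that $\theta(\underline{a}^{(m)})\to z$. For each fixed $i$, the product-topology convergence yields $a^{(m)}_i=a_i$ for all large $m$, hence $d(f^i(\theta(\underline{a}^{(m)})),p_{a_i})<\beta$, and in the limit $d(f^i(z),p_{a_i})\leq\beta$. Combining with $d(f^i(\theta(\underline{a})),p_{a_i})<\beta$ we get $d(f^i(z),f^i(\theta(\underline{a})))\leq 2\beta<\vep$ for every $i\in\NN_0$, and expansivity forces $z=\theta(\underline{a})$. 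For surjectivity, fix $x\in K$; refining $\gamma$ if necessary so that $\gamma<\beta$ (this is compatible with the constraints placed on $\gamma$), choose for every $n\in\NN_0$ an index $a_n\in[k]$ with $f^n(x)\in B_\gamma(p_{a_n})$. The uniform-continuity choice of $\gamma$ gives $d(f(p_{a_n}),f^{n+1}(x))<\alpha/2$, and therefore
$$d(f(p_{a_n}),p_{a_{n+1}})\leq d(f(p_{a_n}),f^{n+1}(x))+d(f^{n+1}(x),p_{a_{n+1}})<\tfrac{\alpha}{2}+\gamma<\alpha,$$
so $A_{a_n a_{n+1}}=1$ and $\underline{a}=(a_n)_{n\in\NN_0}\in\Sigma_A^+$. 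The inequality $d(f^n(x),p_{a_n})<\gamma<\beta$ exhibits $x$ as a $\beta$-shadow of $(p_{a_n})$, and uniqueness yields $\theta(\underline{a})=x$.

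The only delicate point is parameter bookkeeping: one must verify that the uniqueness statement of Lemma~\ref{M13} still operates with the non-strict inequality $d(f^i(z),p_{a_i})\leq\beta$ produced by the limit in the continuity step, which it does because the estimate $2\beta<\vep$ is strict. Apart from this, the argument is a direct unpacking of the shadowing lemma and the defining property of $\theta$.
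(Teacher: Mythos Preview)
Your proof is correct and follows the same approach as the paper: uniqueness of the $\beta$-shadow gives the intertwining relation, compactness plus expansivity give continuity, and the $\gamma$-cover gives surjectivity. Your explicit constraint $\gamma<\beta$ in the surjectivity step is in fact more careful than the paper (which tacitly needs it to conclude $x=\theta(\underline{a})$), though since $\gamma$ was fixed \emph{before} $\theta$ was defined, the cleanest phrasing is to impose $\gamma<\beta$ at the construction stage rather than ``refining'' it mid-proof.
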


\begin{proof}
Given $x\in K$, we can take $a_i\in [k]$ so that $d(f^i(x),p_{a_i})<\gamma$ for any $i\in\NN_0$. Then
$$d(f(p_{a_i}),p_{a_{i+1}})\leq d(f(p_{a_i}),f(f^i(x)))+d(f^{i+1}(x),p_{a_{i+1}})<\alpha/2+\gamma<\alpha$$
confirming that $(p_{a_i})_{i\in\NN_0}$ is an $\alpha$-pseudo orbit. Therefore $x=\theta(\underline{a})$ and $\theta$ is surjective.

To prove the continuity, since $K$ is compact, it suffices to see that, for any two sequences $(\underline{s}^n)_{n\in\NN}$ and $(\underline{t}^n)_{n\in\NN}$ converging to the same limit $l$ in $\Sigma_A^+$ whose images under $\theta$ converge respectively to $s$ and $t$ in $K$, we have $s=t$.
Fix some $i\in\NN_0$; for any $n\in\NN$, we have $d(f^i(\theta(\underline{s}^n)),p_{s^n_i})<\beta$ and $d(f^i(\theta(\underline{t}^n)),p_{t^n_i})<\beta$. So taking limits we have $d(f^i(s),p_{l_i})\leq\beta$ and $d(f^i(t),p_{l_i})\leq\beta$. Hence, $d(f^i(s),f^i(t))\leq 2\beta<\vep$ and, since $\vep$ is an expansivity constant for $f$, we get $s=t$.

The relation $f\circ\theta=\theta\circ\sigma^+_A$ is a consequence of the uniqueness of the $\beta$-shadow and the fact that, if $x$ is a $\beta$-shadow for $(p_{a_i})_i$, then $f(x)$ is a $\beta$-shadow for $(p_{a_{i+1}})_i=(p_{\sigma_A^+(a_i)})_i$.
\end{proof}

\ms

Let $T_i=\{\theta(\underline{a}):a_0=i\}$ for $i\in [k]$. The set $T_i$ is closed since $C_i$ is compact and $\theta$ is continuous. Moreover $T_i=\theta(C_i)$ where $C_i=\{\underline{a}\in\Sigma_A^+:a_0=i\}$, and, since $\Sigma_A^+=\bigcup_{i=1}^k C_i$, we have $K=\bigcup_{i=1}^k T_i$ because $\theta$ is surjective. Hence, $\{T_i,i\in[k]\}$ is a finite closed cover of $K$.

\begin{lem}\label{M15}
If $A_{ij}=1$, then $T_j\subseteq f(T_i)$ and $\stackrel{\circ}{T_j}\subseteq f(\stackrel{\circ}{T_i})$. Also, given $x\in T_i$ with $f(x)\in T_j$, if $g:B_r(f(x))\rightarrow K$ is a contractive branch of $f^{-1}$ with $g(f(x))=x$, then $g(T_j)\subseteq T_i$ and $g(\stackrel{\circ}{T_j})\subseteq \,\stackrel{\circ}{T_i}$.
\end{lem}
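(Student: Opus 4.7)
The plan is to derive all four assertions from two ingredients: the symbolic representation of $K$ furnished by the semiconjugacy $\theta$, and the uniqueness of nearby pre-images for a Ruelle-expanding map. I read the statement as placing all four conclusions under the single hypothesis $A_{ij}=1$, so one is free to prepend the symbol $i$ to any admissible sequence beginning with $j$.

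To establish $T_j \subseteq f(T_i)$, I would pick $y=\theta(\underline{b}) \in T_j$ with $b_0=j$ and form the admissible sequence $\underline{a}=(i,b_0,b_1,\ldots)$. Setting $x:=\theta(\underline{a}) \in T_i$, the semiconjugacy relation $f\circ\theta=\theta\circ\sigma_A^+$ yields $f(x)=\theta(\underline{b})=y$.

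For the inclusion $g(T_j) \subseteq T_i$, given $y=\theta(\underline{c}) \in T_j$ with $c_0=j$, I would form $\underline{a}=(i,c_0,c_1,\ldots)$ and set $z:=\theta(\underline{a}) \in T_i$, so that $f(z)=y$. Both $g(y)$ and $z$ are pre-images of $y$, and the crux is to show they coincide. Since any two points of a given $T_m$ lie within $\beta$ of $p_m$, each $T_m$ has diameter at most $2\beta$; hence $d(g(y),x) = d(g(y),g(f(x))) \leq \lambda\, d(y,f(x)) \leq 2\lambda\beta$, and $d(x,z)\leq 2\beta$. These bounds yield $d(g(y),z) \leq 2(1+\lambda)\beta < 4\beta < c$, using $\lambda<1$ and the standing choice $\beta<c/4$. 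The Ruelle-expanding separation (distinct pre-images are more than $c$ apart) then forces $g(y)=z \in T_i$.

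For the interior assertions I would use that $g$ is a homeomorphism from $B_r(f(x))$ onto the open subset $g(B_r(f(x))) \subseteq K$; this follows from Proposition~\ref{M8}(a) (the image of a small open ball under $g$ coincides with a dynamical ball $B(1,\vep,\cdot)$, which is open in $K$) together with local uniqueness of contractive branches that agree at a point. Hence $g$ sends open sets to open sets, so $g(\stackrel{\circ}{T_j})$ is an open subset of $T_i$, i.e.\ $g(\stackrel{\circ}{T_j}) \subseteq \stackrel{\circ}{T_i}$. The final inclusion $\stackrel{\circ}{T_j} \subseteq f(\stackrel{\circ}{T_i})$ follows at once: for $y \in \stackrel{\circ}{T_j}$ choose $x \in T_i$ with $f(x)=y$ via the first step and apply what was just shown to the branch $g$ with $g(y)=x$, obtaining $x=g(y) \in g(\stackrel{\circ}{T_j}) \subseteq \stackrel{\circ}{T_i}$, so $y=f(x) \in f(\stackrel{\circ}{T_i})$. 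The main delicate point throughout is the openness of the contractive branch $g$ on $B_r(f(x))$; once that is secured, the remainder reduces to careful bookkeeping of the constants $\beta$, $\lambda$, $c$ fixed earlier in the construction of $\theta$.
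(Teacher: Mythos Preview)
Your proof is correct and follows essentially the same route as the paper's: both prove $T_j\subseteq f(T_i)$ by prepending $i$ via the semiconjugacy $\theta$, both obtain $g(T_j)\subseteq T_i$ by showing $g(y)$ and the symbolically constructed pre-image $z\in T_i$ are within $4\beta<c$ of each other and hence coincide, and both deduce the interior inclusions from the fact that $g$ is a homeomorphism onto an open subset of $K$. The only cosmetic differences are that the paper routes its distance estimate through $p_i$ rather than invoking $\mathrm{diam}(T_i)\le 2\beta$ directly, and it derives $\stackrel{\circ}{T_j}\subseteq f(\stackrel{\circ}{T_i})$ in one stroke from a single fixed branch via $\stackrel{\circ}{T_j}=f(g(\stackrel{\circ}{T_j}))$ rather than choosing a branch for each $y$.
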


\begin{proof}
Given any $y\in T_j$, we have $y=\theta(\underline{b})$ for some $\underline{b}\in\Sigma_A^+$ with $b_0=j$. Since $A_{ij}=1$, we can take $\underline{c}=(i,b_0,b_1,b_2,\ldots)\in \Sigma_A^+$, and so $y=\theta(\underline{b})=\theta(\sigma_A^+(\underline{c}))=f(\theta(\underline{c}))\in f(\theta(C_i))=f(T_i)$. Then $T_j\subseteq f(T_i)$.

Notice that $T_j\subseteq B_\beta(p_j)$. Since $d(f(x),p_j)<\beta$, we have $T_j\subseteq B_{2\beta}(f(x))\subseteq B_r(f(x))$. Let $g:B_r(f(x))\rightarrow K$ be a contractive branch of $f^{-1}$ with $g(f(x))=x$. Given $y\in T_j$, we have $y=f(z)$ for some $z\in T_i$. Thus
\[
d(g(y),z)\leq d(g(y),g(f(x)))+d(x,p_i)+d(p_i,z)<d(y,f(x))+2\beta<4\beta<c
\]
and, since $f(g(y))=y=f(z)$, we get $g(y)=z\in T_i$. So $g(T_j)\subseteq T_i$.

It is easy to see that $g:B_r(f(x))\rightarrow g(B_r(f(x)))$ is a homeomorphism, with
$$g^{-1}=f|_{g(B_r(f(x)))}:g(B_r(f(x)))\rightarrow B_r(f(x)).$$
Therefore we conclude that $g(\stackrel{\circ}{T_j})=\overbrace{g(T_j)}^{\circ}\subseteq\,\stackrel{\circ}{T_i}$ and $\stackrel{\circ}{T_j}=f(g(\stackrel{\circ}{T_j}))\subseteq f(\stackrel{\circ}{T_i})$.
\end{proof}

Let $Z=K\backslash\bigcup_{i=1}^k \partial T_i$. Notice that, since $T_i$ is a closed set, $\partial T_i$ has empty interior. So $Z$ is dense in $K$. Given $x\in Z$, we define

\begin{center}
$T_i^*(x)=\,\stackrel{\circ}{T_i}\,\,$ if $\,\,x\in\,\stackrel{\circ}{T_i}\,\,$ and $\,\,T_i^*(x)=K\backslash T_i\,\,$ if $\,\,x\notin T_i$
\end{center}

\begin{center}
$R(x)=\bigcap_{i=1}^k T_i^*(x)$
\end{center}
The sets $R(x)$ satisfy the following properties:

\begin{itemize}

\item  $R(x)$ is open.

(because it is a finite intersection of open sets)

\item  $x\in R(x).$

(because $x\in T_i^*(x),\forall i\in[k]$)

\item  $R(x)\subseteq\,\stackrel{\circ}{T_i}$ for some $i\in[k]$.

Since $\bigcap_{i=1}^k K\backslash T_i=K\backslash\bigcup_{i=1}^k T_i=\emptyset$, we must have $x\in\,\stackrel{\circ}{T_i}$ for some $i\in[k]$.\\

\item  If $R(x)\cap R(y)\neq\emptyset$, then $R(x)=R(y)$.

In fact,
\begin{eqnarray*}
R(x)\cap R(y)\neq\emptyset &\Rightarrow& \forall i\in[k],T_i^*(x)\cap T_i^*(y)\neq\emptyset \\
&\Rightarrow& \forall i\in[k],T_i^*(x)=T_i^*(y) \\
&\Rightarrow& R(x)=R(y).
\end{eqnarray*}
\end{itemize}

\begin{lem}\label{M16}
Given $x\in Z\cap f^{-1}(Z)$ and a contractive branch $g:B_r(f(x))\rightarrow K$ of $f^{-1}$ with $g(f(x))=x$, we have $g(R(f(x)))\subseteq R(x)$.
\end{lem}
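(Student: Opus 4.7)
The plan is to fix an arbitrary $y \in R(f(x))$ and show, index by index, that $g(y) \in T_i^*(x)$ for every $i \in [k]$; since $R(x) = \bigcap_i T_i^*(x)$, this yields $g(y) \in R(x)$. Because $f(x) \in Z$ and $\{T_1,\ldots,T_k\}$ covers $K$, there is a unique $j$ with $f(x) \in \stackrel{\circ}{T_j}$, and from $R(f(x)) \subseteq \stackrel{\circ}{T_j}$ one also has $y \in \stackrel{\circ}{T_j}$. This $j$ will play the role of the common ``target'' coordinate for repeated applications of Lemma~\ref{M15}.

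For each $i$ with $x \in \stackrel{\circ}{T_i}$, Lemma~\ref{M15} applied to the pair $(x,f(x))$ gives $g(\stackrel{\circ}{T_j}) \subseteq \stackrel{\circ}{T_i}$, and hence $g(y) \in \stackrel{\circ}{T_i} = T_i^*(x)$. This handles the easy half.

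The main obstacle is the opposite direction: if $x \notin T_i$, one must show $g(y) \notin T_i$. I would argue by contradiction, assuming $g(y) \in T_i$; then Lemma~\ref{M15} applied now to the pair $(g(y),y)$ (legitimate since $g(y) \in T_i$ and its image $y$ lies in $T_j$) produces a contractive branch $\tilde g : B_r(y) \to K$ of $f^{-1}$ with $\tilde g(y)=g(y)$ and $\tilde g(T_j) \subseteq T_i$. The plan is then to show $\tilde g(f(x))=x$, which would force $x \in \tilde g(T_j) \subseteq T_i$, the desired contradiction. To establish this identity, the key observation is that $f(x), y \in \stackrel{\circ}{T_j} \subseteq B_\beta(p_j)$, so $d(f(x),y) \leq 2\beta < r$, meaning $f(x)$ lies in the common domain of $g$ and $\tilde g$. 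Using $\tilde g(y)=g(y)$ together with the $\lambda$-contractivity of both branches, $d(\tilde g(f(x)), g(f(x))) \leq d(\tilde g(f(x)), \tilde g(y)) + d(g(y), g(f(x))) \leq 2\lambda\, d(f(x),y) \leq 4\lambda\beta < c$, where the final inequality uses the standing choice $\beta < c/4$. Since $\tilde g(f(x))$ and $g(f(x))=x$ are two pre-images of $f(x)$ lying at distance less than $c$, the first axiom in Definition~\ref{R-expanding} forces $\tilde g(f(x))=x$, closing the contradiction. The delicate point, and the place where the geometric hypotheses really matter, is precisely this comparison of the two contractive branches $g$ and $\tilde g$ on the overlap of their domains.
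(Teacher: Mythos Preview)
Your argument has a genuine gap in both applications of Lemma~\ref{M15}: that lemma carries the hypothesis $A_{ij}=1$, which you never verify. Merely knowing $x\in T_i$ and $f(x)\in T_j$ (or $g(y)\in T_i$ and $y\in T_j$) does not force $A_{ij}=1$; the matrix entry records the relation $d(f(p_i),p_j)<\alpha$ between the \emph{centers} $p_i,p_j$, and nothing in your setup controls $d(f(p_i),f(x))$ well enough to derive it. Relatedly, your claim that the index $j$ with $f(x)\in\stackrel{\circ}{T_j}$ is unique is false: the sets $T_1,\ldots,T_k$ form a cover, not a partition, and $f(x)$ may well lie in several $\stackrel{\circ}{T_j}$.

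The paper avoids this by letting $j$ depend on $i$ through the coding $\theta$. For the forward direction it writes $x=\theta(\underline{a})$ with $a_0=i$ and takes $j=a_1$; then $A_{ij}=1$ holds by admissibility of $\underline{a}$, and $f(x)\in T_j$, so (using $f(x)\in Z$) one gets $y\in R(f(x))\subseteq\stackrel{\circ}{T_j}$ and Lemma~\ref{M15} applies legitimately. For the reverse direction it writes $g(y)=\theta(\underline{b})$ with $b_0=i$, takes $j=b_1$, and uses the key symmetry $y\in R(f(x))\Rightarrow f(x)\in R(y)$ (since $R(y)=R(f(x))$ whenever they meet) to obtain $f(x)\in T_j$; then $x=g(f(x))\in g(T_j)\subseteq T_i$ by Lemma~\ref{M15} with the \emph{same} branch $g$. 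Your introduction of a second branch $\tilde g$ and the estimate $d(\tilde g(f(x)),g(f(x)))\leq 4\lambda\beta<c$ are correct in themselves, but they do not repair the missing $A_{ij}=1$, and the paper's route shows the auxiliary branch is unnecessary once the right (variable) $j$ is chosen.
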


\begin{proof}
Let $y\in R(f(x))$. Notice that $y\in Z$ and $f(x)\in R(y)$. For $i\in[k]$, if $x\in T_i$, then $x=\theta(\underline{a})$ for some $\underline{a}\in\Sigma_A^+$ with $a_0=i$. Let $j=a_1$. Then $f(x)=\theta(\sigma(\underline{a}))$ and $f(x)\in T_j$, so that $y\in R(f(x))\subseteq T_j\Rightarrow g(y)\in g(T_j)$. Since $A_{ij}=1$, by  Lemma~\ref{M15} we get $g(T_j)\subseteq T_i$ and, hence, $g(y)\in T_i$.

On the other hand, if $g(y)\in T_i$ then $g(y)=\theta(\underline{b})$ for some $\underline{b}\in\Sigma_A^+$ with $b_0=i$. Let $j=b_1$. Then $y=f(g(y))=\theta(\sigma(\underline{b}))$ and $y\in T_j$, so that $f(x)\in R(y)\subseteq T_j\Rightarrow x=g(f(x))\in g(T_j)$. Since $A_{ij}=1$, by Lemma~\ref{M15} we get $g(T_j)\subseteq T_i$ and, hence, $x\in T_i$. So $x\in T_i\Leftrightarrow g(y)\in T_i,\forall i\in[k]$.

Similarly, using Lemma~\ref{M15} we obtain $x\in\,\stackrel{\circ}{T_i}\Leftrightarrow g(y)\in \,\stackrel{\circ}{T_i},\forall i\in[k]$, and so conclude that $g(y)\in R(x)$.
\end{proof}

Let $R=\{\overline{R(x)},x\in Z\}$. Since $R$ is a finite set, we can write $R=\{R_1,\ldots,R_k\}$, with $R_i\neq R_j$ if $i\neq j$, for some $k\in\NN$. Also, since $Z$ is dense in $K$, we have $K=\overline{\bigcup_{x\in Z}\{x\}}=\overline{\bigcup_{x\in Z}R(x)}=\bigcup_{x\in Z}\overline{R(x)}=\bigcup_{i=1}^s R_i$, that is to say, $R$ is a finite closed cover of $K$. Let us see that $R$ satisfies the other required properties.

\begin{itemize}
\item $R_i$ has a diameter less than $min\{\vep,c/2\}$ and is proper.

Take $x\in Z$ such that $R_i=\overline{R(x)}$ and $j\in[k]$ such that $R(x)\subseteq\,\stackrel{\circ}{T_j}$. Then $R_i=\overline{R(x)}\subseteq\overline{\stackrel{\circ}{T_j}}\subseteq \overline{T_j}=T_j$ and $diam(R_i)\leq diam(T_j)\leq 2\beta<min\{\vep,c/2\}$. Also, taking into account that the closure of the interior of the closure of the interior of a set is just the closure of the interior of that set, we have
$$\overline{\stackrel{\circ}{R_i}}=\overline{\stackrel{\circ}{\overline{R(x)}}}=\overline{\stackrel{\circ}{\overline{\stackrel{\circ}{R(x)}}}}=\overline{\stackrel{\circ}{R(x)}}=\overline{R(x)}=R_i$$ because $R(x)$ is open.\\

\item $\stackrel{\circ}{R_i}\cap\stackrel{\circ}{R_j}=\emptyset,\forall i,j\in[k],i\neq j.$

Take $x,y\in Z$ such that $R_i=\overline{R(x)}$ and $R_j=\overline{R(y)}$. Suppose that $\stackrel{\circ}{R_i}\cap\stackrel{\circ}{R_j}\neq\emptyset$; using the fact that any open set that intersects the closure of a set also intersects the set itself, we get
\begin{eqnarray*}
\stackrel{\circ}{\overline{R(x)}}\cap\stackrel{\circ}{\overline{R(y)}}\neq\emptyset &\Rightarrow& \stackrel{\circ}{\overline{R(x)}}\cap\overline{R(y)}\neq\emptyset \\
&\Rightarrow& \stackrel{\circ}{\overline{R(x)}}\cap R(y)\neq\emptyset \\
&\Rightarrow& \overline{R(x)}\cap R(y)\neq\emptyset \\
&\Rightarrow& R(x)\cap R(y)\neq\emptyset \\
&\Rightarrow& R(x)=R(y) \\
&\Rightarrow& R_i=R_j \\
&\Rightarrow& i=j.
\end{eqnarray*}

\item $f(\stackrel{\circ}{R_i})\cap\stackrel{\circ}{R_j}\neq\emptyset \Rightarrow \stackrel{\circ}{R_j}\subseteq f(\stackrel{\circ}{R_i}).$

Since $f$ takes open sets into open sets and $Z$ is dense in $K$, $f^{-1}(Z)$ is also dense in $K$. Besides, $Z$ is a nonempty open set, so $Z\cap f^{-1}(Z)$ is dense in $Z$, and, hence, $Z\cap f^{-1}(Z)$ is dense in $K$. Since $\stackrel{\circ}{R_i}\cap f^{-1}(\stackrel{\circ}{R_j})$ is a nonempty open set, we have $Z\cap f^{-1}(Z)\cap\stackrel{\circ}{R_i}\cap f^{-1}(\stackrel{\circ}{R_j})\neq\emptyset$, so we can take $x\in Z\cap\stackrel{\circ}{R_i}$ with $f(x)\in Z\cap\stackrel{\circ}{R_j}$.
Notice that
$$x\in R(x)\subseteq\,\stackrel{\circ}{\overline{R(x)}}\Longrightarrow \stackrel{\circ}{R_i}\cap\stackrel{\circ}{\overline{R(x)}}\neq\emptyset\Longrightarrow R_i =\overline{R(x)}$$
and, similarly, that $R_j=\overline{R(f(x))}$. Using Lemma~\ref{M16} and the fact that $g$ is conti\-nuous, we get $$g(R_j)=g(\overline{R(f(x))})\subseteq\overline{g(R(f(x)))}\subseteq\overline{R(x)}=R_i\Longrightarrow R_j=f(g(R_j))\subseteq f(R_i).$$

\end{itemize}


\ms

We may now construct a semiconjugacy between $f$ and a unilateral subshift of finite type. Let $\left\{R_1,...,R_k\right\}$ be a cover of $K$ like above. As usual, we define a matrix $A\in M_k$, which encodes the itineraries of the orbits by $f$ inside the partition, by

\begin{center}
\it $A_{i j}=1\,\,$ if $\,\,f(\stackrel{\circ}{R_i})\cap\stackrel{\circ}{R_j}\neq\emptyset\,\,$ and $\,\,A_{i j}=0$ otherwise.
\end{center}

\begin{lem}\label{M17}
Let $(a_0,...,a_n)$ be an admissible sequence for $A$. Then $\bigcap_{i=0}^n f^{-i}(\stackrel{\circ}{R_{a_i}})\neq\emptyset$.
\end{lem}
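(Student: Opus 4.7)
The plan is to construct a point $x \in \bigcap_{i=0}^n f^{-i}(\stackrel{\circ}{R_{a_i}})$ explicitly, by building its forward orbit one step at a time, working backward from the last index of the admissible sequence. The statement is essentially a reformulation of the Markov-type property from the cover constructed in Proposition~\ref{M12}.

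I would start at the end: pick any $x_n \in \stackrel{\circ}{R_{a_n}}$, which is possible because $R_{a_n}$ is proper and hence its interior is nonempty (one may of course discard empty pieces from the cover beforehand). Then, for $k = n-1, n-2, \ldots, 0$, given $x_{k+1} \in \stackrel{\circ}{R_{a_{k+1}}}$ already chosen, I use the admissibility $A_{a_k a_{k+1}} = 1$, which by definition means $f(\stackrel{\circ}{R_{a_k}}) \cap \stackrel{\circ}{R_{a_{k+1}}} \neq \emptyset$. The key fourth property of Proposition~\ref{M12} then upgrades this intersection condition to the full inclusion $\stackrel{\circ}{R_{a_{k+1}}} \subseteq f(\stackrel{\circ}{R_{a_k}})$. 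Consequently $x_{k+1}$ has at least one preimage under $f$ lying in $\stackrel{\circ}{R_{a_k}}$; choose such a preimage and call it $x_k$.

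Iterating this backward construction from $k = n-1$ down to $k = 0$ produces a sequence $x_0, x_1, \ldots, x_n$ with $x_i \in \stackrel{\circ}{R_{a_i}}$ and $f(x_i) = x_{i+1}$, whence $f^i(x_0) = x_i \in \stackrel{\circ}{R_{a_i}}$ for every $i \in \{0, 1, \ldots, n\}$. Setting $x := x_0$ gives $x \in \bigcap_{i=0}^n f^{-i}(\stackrel{\circ}{R_{a_i}})$, as desired.

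I do not anticipate any serious obstacle here: the argument is a direct unwinding of the Markov-like condition. The only mildly delicate point is ensuring that the starting interior $\stackrel{\circ}{R_{a_n}}$ is nonempty, which is handled by the properness clause of Proposition~\ref{M12} (and the implicit convention that the cover contains no empty elements). Note also that contractive branches of $f^{-1}$ are not needed at this stage; the surjectivity $\stackrel{\circ}{R_{a_{k+1}}} \subseteq f(\stackrel{\circ}{R_{a_k}})$ alone suffices to pull preimages back.
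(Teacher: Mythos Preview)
Your proof is correct and is essentially the same as the paper's: the paper phrases the backward construction as an induction on the length of the admissible sequence, peeling off the first symbol and pulling back via the inclusion $\stackrel{\circ}{R_{a_{k+1}}}\subseteq f(\stackrel{\circ}{R_{a_k}})$, which is exactly your iterative step unwound.
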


\begin{proof}
The statement is trivial for sequences with just one element. Suppose now that the assertion is valid for the admissible sequence $(a_1,...,a_n)$, so that $\bigcap_{i=0}^{n-1} f^{-i}(\stackrel{\circ}{R_{a_{i+1}}})\neq\emptyset$. Let $y\in\bigcap_{i=0}^{n-1}f^{-i}(\stackrel{\circ}{R_{a_{i+1}}})$. Since $A_{a_0 a_1}=1$, we have $\stackrel{\circ}{R_1}\subseteq f(\stackrel{\circ}{R_0})$. So $y=f(x)$ for some $x\in\,\stackrel{\circ}{R_0}$ and it is easy to see that $x\in\bigcap_{i=0}^n f^{-i}(\stackrel{\circ}{R_{a_i}})$.
\end{proof}

As a consequence of Lemma~\ref{M17}, we can see that, for each sequence $\underline{a}=(a_n)_{n\in\NN_0}\in\Sigma_A^+$, if $F_n=\bigcap_{i=0}^n f^{-i}(R_{a_i})$ then $(F_n)_n$ is a nested sequence of nonempty compact sets, so its limit is nonempty. Besides, if $x$ and $y$ are two points in this intersection, then $\forall i\in\NN_0,d(f^i(x),f^i(y))\leq diam(R_{a_i})<\vep$, so $x=y$. Therefore we may define a map $\Pi:\Sigma_A^+\rightarrow K$ as
\[\{\Pi(\underline{a})\}=\lim_{n \rightarrow +\infty} \, F_n=\bigcap_{n=0}^\infty f^{-n}(R_{a_n}).\]

\begin{lem}
$\Pi$ is a semiconjugacy between $\sigma_A^+$ and $f$.
\end{lem}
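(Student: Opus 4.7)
The plan is to verify, in turn, that $\Pi$ is well-defined as a single-valued map into $K$, is continuous, commutes with the shift, and is surjective.

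\emph{Well-definedness.} For fixed $\underline{a}\in\Sigma_A^+$, set $F_n=\bigcap_{i=0}^n f^{-i}(R_{a_i})$. Lemma~\ref{M17} combined with the compactness of each $R_{a_i}$ shows that $(F_n)_n$ is a decreasing sequence of nonempty compact sets, whose intersection is therefore nonempty. If $x,y\in\bigcap_n F_n$, then $d(f^i(x),f^i(y))\leq\mathrm{diam}(R_{a_i})<\vep$ for every $i\geq 0$, so $x=y$ by expansivity. Hence $\Pi$ is well-defined.

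\emph{Continuity and commutation with the shift.} If $\underline{a}^{(k)}\to\underline{a}$ in $\Sigma_A^+$, then for every $N$ the symbols $a^{(k)}_0,\dots,a^{(k)}_N$ coincide with $a_0,\dots,a_N$ once $k$ is large, and so $\Pi(\underline{a}^{(k)})\in F_N$. Since $F_N$ is closed, every subsequential limit of the sequence $(\Pi(\underline{a}^{(k)}))_k$ lies in $F_N$ for every $N$, and hence equals the unique point $\Pi(\underline{a})\in\bigcap_n F_n$; this yields continuity. The identity $f\circ\Pi=\Pi\circ\sigma_A^+$ is immediate: from $\Pi(\underline{a})\in f^{-n}(R_{a_n})$ for all $n\geq 0$, we deduce $f(\Pi(\underline{a}))\in f^{-m}(R_{(\sigma_A^+\underline{a})_m})$ for all $m\geq 0$, so $f(\Pi(\underline{a}))=\Pi(\sigma_A^+(\underline{a}))$.

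\emph{Surjectivity.} Let $Z=K\setminus\bigcup_i\partial R_i$, which is open and dense because each $R_i$ is proper. Since $f$ is open (through the local inverse branch $\phi$ of Definition~\ref{R-expanding}), the pre-image of a dense open set is again dense open, so by the Baire category theorem the set $Z'=\bigcap_{n\geq 0}f^{-n}(Z)$ is dense in $K$. For $x\in Z'$, each iterate $f^n(x)$ lies in the interior of exactly one element $\stackrel{\circ}{R_{a_n}}$; moreover, $f^{n+1}(x)\in f(\stackrel{\circ}{R_{a_n}})\cap\stackrel{\circ}{R_{a_{n+1}}}$ witnesses $A_{a_na_{n+1}}=1$, so $\underline{a}=(a_n)_n\in\Sigma_A^+$, and $\Pi(\underline{a})=x$ by uniqueness of the intersection point. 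Thus $Z'\subseteq\Pi(\Sigma_A^+)$, and since $\Pi(\Sigma_A^+)$ is the continuous image of a compact set, it is closed; containing the dense subset $Z'$ forces $\Pi(\Sigma_A^+)=K$.

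The first three verifications are essentially formal consequences of Lemma~\ref{M17} and the expansivity of $f$. The main obstacle is the surjectivity step: although a point whose entire forward orbit avoids $\bigcup_i\partial R_i$ has an obvious itinerary, one must ensure that such points are dense in $K$, and this is where both the openness of $f$ (built into the definition of Ruelle-expanding) and Baire's theorem are indispensable.
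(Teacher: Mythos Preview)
Your proof is correct and, in fact, more complete than the paper's own argument. The paper verifies only the commutation relation $f\circ\Pi=\Pi\circ\sigma_A^+$, by the set-theoretic inclusion
\[
\{f(\Pi(\underline{a}))\}=f\Bigl(\bigcap_{n\geq 0}f^{-n}(R_{a_n})\Bigr)\subseteq \bigcap_{n\geq 0}f^{-n}(R_{a_{n+1}})=\{\Pi(\sigma_A^+(\underline{a}))\},
\]
and then simply \emph{asserts} that $\Pi$ is surjective and continuous, without further justification. Your continuity argument (eventual agreement of initial blocks forces the images into the closed set $F_N$) and your surjectivity argument are thus genuine additions rather than deviations.

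The surjectivity step is the one place where your route is substantively different in spirit. You invoke the openness of $f$ (which does follow from the local branches $\phi$ in Definition~\ref{R-expanding}) to show each $f^{-n}(Z)$ is dense open, apply Baire to obtain a residual set $Z'$ of points with unambiguous itineraries, and then close up. This is clean and self-contained. An alternative, closer to the paper's overall toolkit, would be to observe that for every $x\in K$ and every $N$ one can find, by density of $Z\cap f^{-1}(Z)\cap\cdots\cap f^{-N}(Z)$ and continuity, an admissible block $(a_0,\dots,a_N)$ with $f^i(x)\in R_{a_i}$, and then pass to a subsequential limit in $\Sigma_A^+$ by compactness; but your Baire argument is equally valid and arguably more transparent.
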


\begin{proof}

Let $\underline{a}\in\Sigma_A^+$. Notice that $f(f^{-1}(L))\subseteq L$ for any $L\subseteq K$. Therefore
\[
\{f(\Pi(\underline{a}))\}=f \left(\bigcap_{n=0}^\infty f^{-n}(R_{a_n})\right)\subseteq
f \left(\bigcap_{n=1}^\infty f^{-n}(R_{a_n})\right)=
\]

\[
=f \left(f^{-1}\left(\bigcap_{n=1}^\infty f^{-(n-1)}(R_{a_n})\right)\right)\subseteq
\bigcap_{n=0}^\infty f^{-n}(R_{a_{n+1}})=\{\Pi(\sigma_A^+(\underline{a}))\}
\]
So $f(\Pi(\underline{a}))=\Pi(\sigma_A^+(\underline{a}))$. As $\Pi$ is also surjective and continuous, it semiconjugates $\sigma_A^+$ and $f$. (So $h(f)\leq h(\sigma_A^+)\leq \log k$.)
\end{proof}

\ms

Since $\Pi$ is not necessarily injective, a point in $K$ can have more than one preimage under $\Pi$. However, we will show that it cannot have more than $k$ pre-images. (Recall that $k$ is the number of elements of the covering we are dealing with.)

\begin{lem}\label{M18}
Let $(a_0,...,a_n)$ and $(b_0,...,b_n)$ be two admissible sequences for $A$ with $a_n=b_n$. If, for any $i\in\{0,\ldots,n\}$, we have $R_{a_i}\cap R_{b_i}\neq\emptyset$, then the sequences are equal.
\end{lem}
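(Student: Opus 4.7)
My plan is to prove the statement by \emph{backward} induction on $i$, descending from $i=n$ to $i=0$. The base case $i=n$ is exactly the hypothesis $a_n=b_n$. For the inductive step, I assume $a_{i+1}=b_{i+1}$ and aim to conclude $a_i=b_i$. The two ingredients that I expect to do all the work are the fact that every $R_j$ has diameter strictly less than $c/2$, together with the first Ruelle-expanding property (distinct pre-images of the same point lie at distance $>c$), plus the cover's disjointness of interiors, which turns any overlap of interiors of two cells into an equality of cells.

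Concretely, admissibility gives $A_{a_i a_{i+1}}=1$ and $A_{b_i b_{i+1}}=1$; by the cover property of Proposition~\ref{M12}, this yields $\stackrel{\circ}{R_{a_{i+1}}}\subseteq f(\stackrel{\circ}{R_{a_i}})$ and $\stackrel{\circ}{R_{b_{i+1}}}\subseteq f(\stackrel{\circ}{R_{b_i}})$. Since by the inductive hypothesis $R_{a_{i+1}}=R_{b_{i+1}}$, I may pick any $y\in \stackrel{\circ}{R_{a_{i+1}}}$ and find $u\in \stackrel{\circ}{R_{a_i}}$ and $v\in \stackrel{\circ}{R_{b_i}}$ with $f(u)=f(v)=y$. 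Now I exploit the hypothesis $R_{a_i}\cap R_{b_i}\neq\emptyset$: fixing $x$ in this intersection, the triangle inequality gives
$$d(u,v)\,\leq\, d(u,x)+d(x,v)\,\leq\, \mathrm{diam}(R_{a_i})+\mathrm{diam}(R_{b_i})\,<\,\tfrac{c}{2}+\tfrac{c}{2}\,=\,c.$$
Since $f(u)=f(v)$, the first Ruelle-expanding condition forces $u=v$. Hence $\stackrel{\circ}{R_{a_i}}\cap\stackrel{\circ}{R_{b_i}}\neq\emptyset$, and by the disjointness of interiors in Proposition~\ref{M12}, this implies $R_{a_i}=R_{b_i}$, i.e.\ $a_i=b_i$.

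I do not foresee a genuine obstacle here; the only point that requires care is ensuring the diameter estimate is strict, so that the Ruelle-expanding contrapositive ($d(u,v)<c$ and $f(u)=f(v)$ imply $u=v$) truly applies. This is guaranteed because Proposition~\ref{M12} provides $\mathrm{diam}(R_j)<\min\{\vep,c/2\}$, not merely $\leq c/2$. Once the inductive step is established, the backward induction immediately yields $a_i=b_i$ for all $i\in\{0,\ldots,n\}$, completing the proof.
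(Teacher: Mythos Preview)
Your proof is correct and follows essentially the same backward-induction argument as the paper's own proof: both pass from $a_{i+1}=b_{i+1}$ to $a_i=b_i$ by pulling back a point of $\stackrel{\circ}{R_{a_{i+1}}}=\stackrel{\circ}{R_{b_{i+1}}}$ to two preimages in $\stackrel{\circ}{R_{a_i}}$ and $\stackrel{\circ}{R_{b_i}}$, then using the diameter bound together with $R_{a_i}\cap R_{b_i}\neq\emptyset$ to force these preimages to coincide. The only cosmetic difference is that the paper first invokes Lemma~\ref{M17} to obtain a single orbit $(f^i(x))_i$ with $f^i(x)\in\stackrel{\circ}{R_{a_i}}$ for all $i$, whereas you simply pick an arbitrary interior point at each step; your version is a mild streamlining, and your worry about strictness is in fact unnecessary since the contrapositive of the Ruelle-expanding condition already gives $u=v$ from $d(u,v)\leq c$.
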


\begin{proof}
We have seen in Lemma~\ref{M17} that $\bigcap_{i=0}^n f^{-i}(\stackrel{\circ}{R_{a_i}})\neq\emptyset$, so there is some $x\in K$ with $f^i(x)\in\,\stackrel{\circ}{R_{a_i}}$.
By hypothesis, $R_{a_n}=R_{b_n}$. Suppose now that, for $i\in[n]$, we have $R_{a_i}=R_{b_i}$. Since $A_{a_{i-1} a_i}=A_{b_{i-1} b_i}=1$, we get $\stackrel{\circ}{R_{a_i}}\subseteq f(\stackrel{\circ}{R_{a_{i-1}}})$ and $\stackrel{\circ}{R_{b_i}}\subseteq f(\stackrel{\circ}{R_{b_{i-1}}})$. Then, since $f^i(x)\in\,\stackrel{\circ}{R_{a_i}}=\,\stackrel{\circ}{R_{b_i}}$, there are $y\in\,\stackrel{\circ}{R_{a_{i-1}}}$ and $z\in\,\stackrel{\circ}{R_{b_{i-1}}}$ such that $f^i(x)=f(y)=f(z)$. Also, $d(y,z)\leq diam(R_{a_{i-1}})+diam(R_{b_{i-1}})\leq c$ because $R_{a_{i-1}}\cap R_{b_{i-1}}\neq\emptyset$. So $y=z$ and $\stackrel{\circ}{R_{a_{i-1}}}\cap\stackrel{\circ}{R_{b_{i-1}}}\neq\emptyset$. Since different elements of the partition must have disjoint interior, we conclude that $R_{a_{i-1}}=R_{b_{i-1}}$.
\end{proof}

\begin{prop}\label{M19}
Any point of $K$ has no more than $k$ pre-images under $\Pi$.
\end{prop}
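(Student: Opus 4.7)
The plan is to exploit Lemma~\ref{M18} via its contrapositive, together with the observation that two distinct pre-images of the same point automatically satisfy the intersection hypothesis of that lemma at every coordinate.

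Suppose $x \in K$ admits $m$ distinct pre-images $\underline{a}^{(1)}, \ldots, \underline{a}^{(m)}$ in $\Sigma_A^+$ under $\Pi$. I would first note the cheap but essential remark: since $\Pi(\underline{a}^{(j)}) = x$ means $f^n(x) \in R_{a^{(j)}_n}$ for every $n \in \NN_0$ and every $j$, the intersection $R_{a^{(j)}_n} \cap R_{a^{(j')}_n}$ always contains the point $f^n(x)$ and is therefore nonempty. Consequently, the hypothesis ``$R_{a_i} \cap R_{b_i} \neq \emptyset$ for all $i \leq n$'' in Lemma~\ref{M18} is automatic for any pair of pre-images of $x$, for any $n$.

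Next, because the $m$ sequences are pairwise distinct, for each pair $j \neq j'$ there is a least coordinate $n_{jj'}$ where they disagree. Setting $N = \max_{j \neq j'} n_{jj'}$, for every $n \geq N$ and every pair $j \neq j'$ the sequences $\underline{a}^{(j)}$ and $\underline{a}^{(j')}$ differ in some position $\leq n$. Applying the contrapositive of Lemma~\ref{M18} to the admissible blocks $(a^{(j)}_0,\ldots,a^{(j)}_n)$ and $(a^{(j')}_0,\ldots,a^{(j')}_n)$, whose intersection hypothesis is satisfied by the remark above, I would conclude that $a^{(j)}_n \neq a^{(j')}_n$.

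Therefore, for any fixed $n \geq N$, the assignment $j \mapsto a^{(j)}_n$ is an injection from $\{1,\ldots,m\}$ into $[k]$, which forces $m \leq k$. There is no real obstacle here: all the work was already done in Lemma~\ref{M18}, and the only genuine step is recognizing that the intersection hypothesis of that lemma comes for free when all the sequences project to the same point. The argument is essentially a pigeonhole packaged with Lemma~\ref{M18}.
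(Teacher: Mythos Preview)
Your proof is correct and follows essentially the same approach as the paper: both arguments use Lemma~\ref{M18} together with the observation that the intersection hypothesis $R_{a_i}\cap R_{b_i}\neq\emptyset$ is automatic since $f^i(x)$ lies in every $R_{a_i^{(j)}}$, and then conclude via pigeonhole on the $n$-th coordinate. The only cosmetic difference is that the paper argues by contradiction starting from $k+1$ pre-images, whereas you phrase it directly as an injection $j\mapsto a_n^{(j)}$ into $[k]$.
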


\begin{proof}
Suppose, by contradiction, that there was a point in $x\in K$ with $k+1$ distinct pre-images. Call these pre-images $\underline{x}^1,\underline{x}^2,\ldots,\underline{x}^{k+1}$. Then, for $n$ big enough, the admissible sequences $(x_0^i,\ldots,x_n^i)$ must be different from each other. But, since we have $k+1$ sequences, at least two of them must have the same last element, so they should be equal by Lemma~\ref{M18}. (Remind that, by definition of $\Pi$, for every $m\in\{0,\ldots,n\}$ and $i\in\left[k+1\right]$, we have $f^m(x)\in R_{x_m^i}$.)
\end{proof}

\begin{prop}\label{M20}
The pre-images of periodic points of $f$ are periodic points of $\sigma_A^+$.
\end{prop}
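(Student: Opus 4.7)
The plan is to show that any $\Pi$-preimage of an $f$-periodic point is itself $\sigma_A^+$-periodic by analysing how $\sigma_A^+$ acts on the fiber $\Pi^{-1}(\{x\})$. I would fix $x \in K$ with $f^p(x)=x$ and take any $\underline{a} \in \Pi^{-1}(\{x\})$. Iterating the semiconjugacy relation $\Pi \circ \sigma_A^+ = f \circ \Pi$ yields $\Pi \circ \sigma_A^{+p} = f^p \circ \Pi$, so $\sigma_A^{+p}$ maps the fiber $\Pi^{-1}(\{x\})$ into itself; by Proposition~\ref{M19} this fiber has at most $k$ elements, and is in particular finite.

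The heart of the argument will be the claim that $\sigma_A^{+p}$ is \emph{injective} on $\Pi^{-1}(\{x\})$. To prove it, I would take $\underline{b},\underline{c} \in \Pi^{-1}(\{x\})$ with $\sigma_A^{+p}(\underline{b}) = \sigma_A^{+p}(\underline{c})$, so that $b_n = c_n$ for every $n \geq p$ and, in particular, $b_p = c_p$. From $\Pi(\underline{b})=\Pi(\underline{c})=x$ and the definition of $\Pi$ as the intersection $\bigcap_{n\geq 0} f^{-n}(R_{a_n})$, I obtain $f^n(x) \in R_{b_n} \cap R_{c_n}$ for every $n \in \NN_0$, so all these intersections are nonempty. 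The finite admissible blocks $(b_0,\ldots,b_p)$ and $(c_0,\ldots,c_p)$ therefore satisfy the hypotheses of Lemma~\ref{M18} (common final symbol and pairwise intersecting rectangles), which yields $b_n = c_n$ for all $0 \leq n \leq p$; combined with the agreement from position $p$ onwards, this gives $\underline{b}=\underline{c}$.

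Once injectivity is established, an injective self-map of a finite set is automatically a bijection, so $\sigma_A^{+p}$ permutes $\Pi^{-1}(\{x\})$; in particular $\underline{a}$ lies on a cycle of that permutation, producing some $m \in \NN$ with $(\sigma_A^{+p})^m(\underline{a}) = \underline{a}$ and proving that $\underline{a}$ is periodic under $\sigma_A^+$ with period dividing $mp$.

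The main obstacle I expect is precisely the non-invertibility of the unilateral shift $\sigma_A^+$: a naive pigeonhole applied to the forward $\sigma_A^{+p}$-orbit of $\underline{a}$ inside the finite fiber would only deliver \emph{eventual} periodicity, not periodicity. The point that rescues the argument is that belonging to $\Pi^{-1}(\{x\})$ automatically forces the intersection hypothesis $R_{b_n}\cap R_{c_n}\neq\emptyset$ of Lemma~\ref{M18} to hold for any two preimages of $x$, so that lemma becomes a genuine injectivity statement on the fiber and, by finiteness, upgrades to bijectivity.
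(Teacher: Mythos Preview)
Your proposal is correct and follows essentially the same approach as the paper: show that $\sigma_A^{+p}$ maps the finite fiber $\Pi^{-1}(\{x\})$ into itself, use Lemma~\ref{M18} to establish injectivity on the fiber, and then invoke that an injection of a finite set is a permutation, so every preimage lies on a cycle of some $\sigma_A^{+p}$ of order $m$ and is periodic of period dividing $mp$. The paper's proof enumerates the fiber as $\underline{x}^1,\ldots,\underline{x}^r$ and phrases the injectivity step as a contradiction, but the logical content and the use of Lemma~\ref{M18} are identical to what you wrote.
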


\begin{proof}
To simplify the notation, denote $\sigma=\sigma_A^+$. Assume that $x\in K$ is such that $f^p(x)=x$ for some $p\in\NN$. Let $\underline{x}^1,\underline{x}^2,\ldots,\underline{x}^r$ be the pre-images of $x$, distinct from each other by hypothesis. Then, for every $i\in[r]$, we have $\Pi(\sigma^p(\underline{x}^i))=f^p(\Pi(\underline{x}^i))=f^p(x)=x$, so that $\sigma^p(\underline{x}^1),\sigma^p(\underline{x}^2),\ldots,\sigma^p(\underline{x}^r)$ are also pre-images of $x$.

Suppose that there are $i,j\in[r]$, $i\neq j$, with $\sigma^p(\underline{x}^i)=\sigma^p(\underline{x}^j)$; in particular, we have $x_p^i=x_p^j$. Then the admissible sequences $(x_0^i,\ldots,x_p^i)$ and $(x_0^j,\ldots,x_p^j)$ verify the hypothesis of Lemma~\ref{M18}, and so they must be equal. Thus $$\underline{x}^i=(x_0^i,x_1^i\ldots,x_p^i,x_{p+1}^i,\ldots)=(x_0^j,x_1^j\ldots,x_p^j,x_{p+1}^j,\ldots)=\underline{x}^j,$$
which contradicts the assumption that $\underline{x}^1,\underline{x}^2,\ldots,\underline{x}^r$ are distinct from each other.

Then $\sigma^p(\underline{x}^1),\sigma^p(\underline{x}^2),\ldots,\sigma^p(\underline{x}^r)$ are also distinct from each other and, therefore, they are precisely the pre-images of $x$. So there is a permutation $\mu\in S_r$ such that $\sigma^p(\underline{x}^i)=\underline{x}^{\mu(i)}$ for every $i\in[r]$. Hence $\sigma^{ord(\mu)p}(\underline{x}^i)=\underline{x}^{\mu^{ord(\mu)}(i)}=\underline{x}^i$ for every $i\in[r]$.
\end{proof}

\ms

In spite of the existence of the semiconjugacy $\Pi$ between $\sigma_A^+$ and $f$, we may have $N_p(f)\neq N_p(\sigma_A^+)$, mainly for two reasons:
\begin{itemize}
\item If two rectangles intersect at their boundaries, there the map $\Pi$ is many to one, and so several points in $\text{Fix}\left((\sigma_A^+)^p\right)$ may be mapped to the same point in $\text{Fix}(f^p)$.
\item The map $f^p$ may rotate its domain in such a way that two rectangles are interchanged while their common boundary is kept fixed. In that case, a periodic point by $f$ with period $p$ belonging to that boundary would correspond, through $\Pi$, to points with higher period by $\sigma_A^+$, say $2p$.
\end{itemize}

To capture all these events that affect the estimation of the number of periodic points of $f$, we will construct subshifts whose alphabets are sets of $r\in[k]$ intersecting rectangles, using an algebraic device to cancel out the overcounting. \\

For each $r\in[k]$, consider
\[
I_r=\left\{\{s_1,\ldots,s_r\}\subset[k]:\bigcap_{i=1}^r R_{s_i}\neq\emptyset\right\}
\]
where we assume that $s_1<s_2<\ldots<s_r$. Let $A^{(r)}$ and $B^{(r)}$ be matrices with coefficients indexed by the set $I_r$ and defined as follows:

\begin{defi}
Given $s,t\in I_r$, with $s=\{s_1,...,s_r\}$ and $t=\{t_1,...,t_r\}$, if there is a unique permutation $\mu\in S_r$ such that $A_{s_i t_{\mu(i)}}=1$ for every $i\in[r]$, then
$$A^{(r)}_{s t}=1\,\, \text{ and } \,\,B^{(r)}_{s t}=sgn(\mu)$$
where $sgn(\mu)$ denotes the signature of the permutation $\mu$ (equal to $1$ if the permutation is even and to $-1$ if it is odd); otherwise, set
$$A^{(r)}_{s t}=B^{(r)}_{s t}=0.$$
\end{defi}

\noindent \textbf{Remark}: $A^{(1)}=A$.

\ms

Let $\Sigma_r^+=I_r^{\NN_0}$ be the set of sequences indexed by $\NN_0$ whose elements belong to $I_r$ and $\Sigma(A^{(r)})^+\subseteq\Sigma_r^+$ be the subset of admissible sequences according to the matrix $A^{(r)}$. Besides, let $\sigma_r^+$ denote the unilateral shift defined on these sets.

If $x\in Per_p(f)$, let $\underline{\alpha}^1,\ldots,\underline{\alpha}^r$ be the pre-images of $x$ under the map $\Pi$ (notice that $r\leq k$, by Proposition~\ref{M19}).
Then:

\begin{lem}\label{M21}
If $\underline{s}$ and $\underline{t}$ are two pre-images of $x$ by $\Pi$ with $s_i=t_i$ for some $i\in\NN_0$, then $\underline{s}=\underline{t}$.
\end{lem}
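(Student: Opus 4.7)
The plan is to apply Lemma~\ref{M18} twice, bridging the two applications by the periodicity of the pre-image sequences that follows from Proposition~\ref{M20}. The backward direction from position $i$ is immediate; the forward direction is converted into another backward application by exploiting periodicity.

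First I apply Lemma~\ref{M18} to go backward from position $i$. The truncations $(s_0, s_1, \ldots, s_i)$ and $(t_0, t_1, \ldots, t_i)$ are admissible words in $\Sigma_A^+$ sharing the same last letter $s_i = t_i$, and for every $0 \leq j \leq i$ the point $f^j(x)$ belongs to $R_{s_j} \cap R_{t_j}$, so the corresponding rectangles meet at every index. Lemma~\ref{M18} therefore yields $s_j = t_j$ for all $0 \leq j \leq i$; in particular $s_0 = t_0$.

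Next I upgrade this to equality at arbitrarily large indices using the periodicity already exploited in Proposition~\ref{M20}. Because $x$ is periodic (say $f^p(x)=x$), the shift permutes the finite set of pre-images $\Pi^{-1}(x)$, so there is an integer $Q \geq 1$ (one may take $Q = \mathrm{ord}(\mu)\,p$, where $\mu$ is the permutation produced in Proposition~\ref{M20}) with $\sigma^Q(\underline{s}) = \underline{s}$ and $\sigma^Q(\underline{t}) = \underline{t}$; both sequences are $Q$-periodic. Combined with $s_0 = t_0$, this periodicity gives $s_Q = s_0 = t_0 = t_Q$. Applying Lemma~\ref{M18} once more, now to the admissible prefixes $(s_0, \ldots, s_Q)$ and $(t_0, \ldots, t_Q)$ (same last letter, and rectangles meeting at every index since each $R_{s_j} \cap R_{t_j}$ contains $f^j(x)$), produces $s_j = t_j$ for all $0 \leq j \leq Q$. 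A final use of $Q$-periodicity extends the equality to every index: for any $n \in \NN_0$, writing $n = \ell Q + j$ with $0 \leq j < Q$, we get $s_n = s_j = t_j = t_n$, hence $\underline{s} = \underline{t}$.

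The main obstacle is the one-sidedness of Lemma~\ref{M18}, which only propagates equality backward from the last position; a naive forward iteration from position $i$ does not work, because $f^{i+1}(x)$ may lie on the common boundary of several rectangles and admissibility alone does not pin down the next symbol. The key move is to use the periodicity of $x$ in $K$, which lifts via Proposition~\ref{M20} to periodicity of the sequences $\underline{s}$ and $\underline{t}$ in $\Sigma_A^+$, in order to translate the known equality $s_0 = t_0$ into an equality at the arbitrarily far position $Q$, at which point Lemma~\ref{M18} can again be applied backward to recover everything in between.
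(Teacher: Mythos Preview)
Your proof is correct and uses the same two ingredients as the paper (periodicity of the pre-images from Proposition~\ref{M20} and the backward-propagation Lemma~\ref{M18}). The paper's argument is slightly more economical: it observes directly that if $n$ is a common period of $\underline{s}$ and $\underline{t}$, then $s_{i+n}=s_i=t_i=t_{i+n}$, so a single application of Lemma~\ref{M18} to the window $(s_i,\ldots,s_{i+n})$ and $(t_i,\ldots,t_{i+n})$ already gives agreement over a full period, hence everywhere. Your preliminary step of pulling back to position $0$ is valid but unnecessary.
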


\begin{proof}
In fact, since $\underline{s}$ and $\underline{t}$ are both periodic points, there is some common period $n$, so that $\sigma^n(\underline{s})=\underline{s}$ and $\sigma^n(\underline{t})=\underline{t}$. Then the sequences $(s_i,s_{i+1},\ldots,s_{i+n})$ and $(t_i,t_{i+1},\ldots,t_{i+n})$ verify the hypothesis of Lemma~\ref{M18}: they end with the same element ($s_{i+n}=s_i=t_i=t_{i+n}$) and, by definition of $\Pi$, $f^m(x)\in R_{s_m}$ and $f^m(x)\in R_{t_m}$ for every $m\in\{i,\ldots,i+n\}$.
\end{proof}

For each $m\in\NN_0$ and $i\in[r]$, we have $f^m(x)\in R_{\alpha_m^i}$, so $\bigcap_{i\in[r]}R_{\alpha_m^i}\neq\emptyset$ and, since $\alpha_m^i\neq\alpha_m^j$ for $i\neq j$ (by Lemma~\ref{M21}), we can define an element $\hat{a}_m\in I_r$ and, therefore, build a sequence $\underline{\hat{a}}=(\hat{a}_m)_{m\in\NN_0}\in\Sigma_r^+$.\\

We will now see how to induce, through $\Pi$, a map $\hat{\Pi}_r:\Sigma(A^{(r)})^+\rightarrow K$. Given a sequence $\underline{\hat{a}}=(\hat{a}_n)_n\in\Sigma(A^{(r)})^+$, with $\hat{a}_n=\{a_n^1,...,a_n^r\}\in I_r$, for every $n\in\NN_0$, there is, by definition of $\Sigma(A^{(r)})^+$, a unique permutation $\mu_n$ such that $A_{a_n^i a_{n+1}^{\mu_n(i)}}=1,\,\,\forall i\in[r]$. Consider the permutations
\begin{center}
$\nu_0=id$

$\nu_n=\mu_{n-1}\circ\ldots\circ\mu_1\circ\mu_0.$
\end{center}
Notice that $\mu_n\circ\nu_n=\nu_{n+1}$, for all $n\in\NN_0$. For each $i\in[r]$ and $m\in\NN_0$, let $\alpha_m^i=a_m^{\nu_m(i)}$. Then $\underline{\alpha}^i=\left(\alpha_m^i\right)_m$ belongs to $\Sigma_A^+$, for every $i\in[r]$. In fact, we have, for all $m\in\NN_0,$
\[
A_{\alpha_m^i\alpha_{m+1}^i}=A_{a_m^{\nu_m(i)} a_{m+1}^{\nu_{m+1}(i)}}=A_{a_m^{\nu_m(i)}a_{m+1}^{\mu_m(\nu_m(i))}}=1.
\]
We know that, for every $m\in\NN_0$, there is some $y_m\in\bigcap_{i=1}^r R_{a_m^i}$ because $\hat{a}_m\in I_r$. So, for all $i,j\in[r]$, we have
\begin{eqnarray*}
d(f^m(\Pi(\underline{\alpha}^i)),f^m(\Pi(\underline{\alpha}^j))) &\leq& d(f^m(\Pi(\underline{\alpha}^i)),y_m)+d(y_m,f^m(\Pi(\underline{\alpha}^j))) \\
&\leq& 2 \, \max_{n\in[k]}\left\{diam (R_n)\right\} \\
&<& \delta <\vep/2
\end{eqnarray*}
which implies that $\Pi(\underline{\alpha}^i)=\Pi(\underline{\alpha}^j)$. Then, for each $r\in[k]$, we can define a map $\hat{\Pi}_r:\Sigma(A^{(r)})^+\rightarrow K$ by setting $\hat{\Pi}_r(\underline{\hat{a}})=\Pi(\underline{\alpha}^i)$, which does not depend on the choice of the index $i\in[r]$.

Let us verify that $\hat{\Pi}_r(Per_p(\sigma_r^+))\subseteq Per_p(f)$. Given $\underline{\hat{a}}\in Per_p(\sigma_r^+)$, we have
\[
\{\hat{\Pi}_r(\underline{\hat{a}})\}=\{\Pi(\underline{\alpha}^i)\}=\bigcap_{n\in\NN_0}f^{-n}(R_{\alpha_n^i})
\]
for any $i\in[r]$. So
\[
\{\hat{\Pi}_r(\underline{\hat{a}})\}=\bigcap_{i\in[r]}\bigcap_{n\in\NN_0}f^{-n}(R_{\alpha_n^i})=\bigcap_{n\in\NN_0}f^{-n}\left(\bigcap_{i\in[r]}R_{\alpha_n^i}\right)=\bigcap_{n\in\NN_0}f^{-n}\left(\bigcap_{i\in[r]}R_{a_n^i}\right)
\]
and
\begin{eqnarray*}
\{f^p(\hat{\Pi}_r(\underline{\hat{a}}))\} &=& f^p\left(\bigcap_{n\in\NN_0}f^{-n}\left(\bigcap_{i\in[r]}R_{a_n^i}\right)\right)\subseteq\bigcap_{n\in\NN_0}f^{p-n}\left(\bigcap_{i\in[r]}R_{a_n^i}\right) \\
&\subseteq& \bigcap_{n\in\NN_0,n\geq p}f^{p-n}\left(\bigcap_{i\in[r]}R_{a_n^i}\right)=\bigcap_{n\in\NN_0}f^{-n}\left(\bigcap_{i\in[r]}R_{a_{n+p}^i}\right) \\
&=& \bigcap_{n\in\NN_0}f^{-n}\left(\bigcap_{i\in[r]}R_{a_n^i}\right)=\{\hat{\Pi}_r(\underline{\hat{a}})\}
\end{eqnarray*}
because $\hat{a}_n=\hat{a}_{n+p},\forall n\in\NN_0$. Hence, $f^p(\hat{\Pi}_r(\underline{\hat{a}}))=\hat{\Pi}_r(\underline{\hat{a}})$.

Furthermore, $\mu=id$ is the only permutation in $S_r$ such that $A_{\alpha_m^i \alpha_{m+1}^{\mu(i)}}=1,\forall i\in[r]$. In fact, take a permutation $\mu\in S_r$, with order $\tau$, such that $A_{\alpha_m^i \alpha_{m+1}^{\mu(i)}}=1,\forall i\in[r]$. Given any $j\in[r]$, consider the two admissible sequences
\[
\alpha_n^j\alpha_{n+1}^{\mu(j)}\cdots\alpha_{n+q}^{\mu(j)}\alpha_{n+q+1}^{\mu^2(j)}\cdots\alpha_{n+(\tau-1)q}^{\mu^{\tau-1}(j)}\alpha_{n+(\tau-1)q+1}^j
\]
and
\[
\alpha_n^j\alpha_{n+1}^j\cdots\alpha_{n+q}^j\alpha_{n+q+1}^j\cdots\alpha_{n+(\tau-1)q+1}^j
\]
where $q$ is a common period of the pre-images of $x$. By Lemma~\ref{M18}, they must be equal; in particular, $\alpha_{n+1}^{\mu(j)}=\alpha_{n+1}^j$. Then Lemma~\ref{M21} tells us that $\mu(j)=j$ and, therefore, $\mu=id$.

In this way we have deduced that $\underline{\hat{a}}\in\Sigma(A^{(r)})^+$. Also, as we have seen before, the set of pre-images of $x$ is invariant by $\sigma^p$. Then, for each $m\in\NN_0$, the element $\hat{a}_{m+p}$ in $I_r$, whose terms are $\alpha_{m+p}^1,\ldots,\alpha_{m+p}^r$, is the same as the element $\hat{a}_m\in I_r$, because its entries, $\alpha_{m}^1,\ldots,\alpha_{m}^r$, are the same (although not necessarily in the same order). Therefore we conclude that $\hat{a}_{m+p}=\hat{a}_m$, that is to say, $\underline{\hat{a}}\in Per_p(\sigma_r^+)$.\\

The next Proposition will give a formula for the number of periodic points of $f$. First notice that, if $I_r\neq\emptyset$, then $I_{r'}\neq\emptyset$ for $r'<r$.

\begin{prop}\label{M22}
For all $p\in\NN$,
\[
N_p(f)=\sum_{r=1}^L(-1)^{r-1}\tr((B^{(r)})^p)
\]
where $L$ is the largest value of $r$ for which $I_r\neq\emptyset$.
\end{prop}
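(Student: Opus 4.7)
The plan is to expand each trace $\tr((B^{(r)})^p)$ as a sum over admissible closed cycles and then push forward via the semiconjugacy $\hat{\Pi}_r$ to $\text{Per}_p(f)$. By definition of matrix multiplication, a nonzero term in $\tr((B^{(r)})^p)$ corresponds to a closed sequence $(\hat{a}_0,\hat{a}_1,\ldots,\hat{a}_{p-1},\hat{a}_0)$ admissible for $A^{(r)}$, that is, to an element $\underline{\hat{a}} \in \text{Per}_p(\sigma_r^+) \cap \Sigma(A^{(r)})^+$, and contributes $\prod_{m=0}^{p-1} \text{sgn}(\mu_m) = \text{sgn}(\nu_p)$ in the notation of the construction preceding the statement.

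For each $x \in \text{Per}_p(f)$, let $\Pi^{-1}(x) = \{\underline{\alpha}^1, \ldots, \underline{\alpha}^s\}$, with $s = s(x) \leq k$ by Proposition~\ref{M19}. Proposition~\ref{M20} produces a permutation $\mu_x \in S_s$ defined by $(\sigma_A^+)^p(\underline{\alpha}^i) = \underline{\alpha}^{\mu_x(i)}$. I would then establish the bijection
$$\{\underline{\hat{a}} \in \text{Per}_p(\sigma_r^+) \cap \Sigma(A^{(r)})^+ : \hat{\Pi}_r(\underline{\hat{a}}) = x\} \longleftrightarrow \{T \subseteq \Pi^{-1}(x) : |T| = r,\ \mu_x(T) = T\},$$
sending $\underline{\hat{a}}$ to the set of $r$ preimages produced by the $\hat{\Pi}_r$-construction, and verifying that $\text{sgn}(\nu_p)$ agrees with $\text{sgn}(\mu_x|_T)$ under the natural identification $T \simeq [r]$ (a conjugation computation). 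The bijection and the uniqueness condition in the definition of $A^{(r)}$ both rely on Lemma~\ref{M21}, which forces distinct preimages to have distinct coordinates at every position. This rewrites the target sum as
$$\sum_{r=1}^L (-1)^{r-1} \tr((B^{(r)})^p) = \sum_{x \in \text{Per}_p(f)} \sum_{r=1}^{s(x)} (-1)^{r-1} \sum_{\substack{T \subseteq \Pi^{-1}(x),\ |T|=r \\ \mu_x(T) = T}} \text{sgn}(\mu_x|_T).$$

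It remains to prove the combinatorial identity that, for every permutation $\mu$ of a finite set of size $s$,
$$\sum_{r=1}^s (-1)^{r-1} \sum_{\substack{T \subseteq [s],\ |T|=r \\ \mu(T) = T}} \text{sgn}(\mu|_T) = 1.$$
Decomposing $\mu$ into $\ell$ disjoint cycles of lengths $\ell_1, \ldots, \ell_\ell$, the $\mu$-invariant subsets correspond to subsets $J \subseteq [\ell]$ via $T = \bigcup_{j \in J} C_j$; then $|T| = \sum_{j \in J} \ell_j$ and $\text{sgn}(\mu|_T) = \prod_{j \in J}(-1)^{\ell_j - 1}$, which gives $(-1)^{|T|-1}\text{sgn}(\mu|_T) = (-1)^{|J|-1}$. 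The sum therefore collapses to $\sum_{\emptyset \neq J \subseteq [\ell]} (-1)^{|J|-1} = -((1-1)^\ell - 1) = 1$. Summing this contribution over $x \in \text{Per}_p(f)$ delivers exactly $N_p(f)$.

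The main obstacle is the bijection in the middle step: one must verify that different orderings of an $r$-subset $T$ produce the same element of $\Sigma(A^{(r)})^+$ (because $I_r$ records unordered sets), that the trace sign $\text{sgn}(\nu_p)$ matches $\text{sgn}(\mu_x|_T)$ after the conjugation by the sorting permutation, and that the uniqueness requirement in the definition of $A^{(r)}$ is actually met in the situations arising here. Once this identification is in place, the remainder is the purely combinatorial collapse above, a one-line consequence of the binomial theorem.
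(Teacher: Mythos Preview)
Your proposal is correct and follows essentially the same route as the paper: both arguments expand the trace as a signed sum over periodic points of $\sigma_r^+$, push forward via $\hat{\Pi}_r$ to $\text{Per}_p(f)$, identify the fibre over each $x$ with the $\mu_x$-invariant $r$-subsets of $\Pi^{-1}(x)$ (using Lemma~\ref{M21}), and then collapse the resulting alternating sum via the cycle decomposition of $\mu_x$ and the binomial identity. The only cosmetic difference is ordering: the paper first proves $\Phi(x)=1$ and afterwards identifies $\sum \text{sgn}(\nu)$ with $\tr((B^{(t)})^p)$ by induction, whereas you start from the trace expansion; the technical verifications you flag (well-definedness on unordered sets, the sign match $\text{sgn}(\nu_p)=\text{sgn}(\mu_x|_T)$, and the uniqueness clause in $A^{(r)}$) are exactly those handled in the paper's discussion preceding the proposition.
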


\begin{proof}
Given $x\in Per_p(f)$, consider the function given by
\[
\Phi(x)=\sum_{t=1}^L\left(\sum_{\underline{\hat{a}}\,\in\,\hat{\Pi}_t^{-1}(x)\bigcap Per_p(\sigma_r^+)} (-1)^{t-1} sgn(\nu)\right)
\]
where $\nu$ is the unique permutation in $S_t$ such that $\alpha_p^{\nu(i)}=\alpha_0^{i},\forall i\in[t]$, being $\underline{\alpha}^i$, for $i\in[t]$, the elements of $\Sigma_A^+$ constructed as before.

We want to show that $\Phi(x)=1$. Let $\Pi^{-1}(x)=\left\{\underline{\alpha}^1,\ldots,\underline{\alpha}^r\right\}$ and $\mu$ be the permutation such that $\sigma^p(\underline{\alpha}^i)=\underline{\alpha}^{\mu(i)},\forall i\in\left[r\right]$, that is to say, the permutation induced by the action of $\sigma^p$ on $\Pi^{-1}(x)$. We can write $\mu$ as the product of disjoint cycles $\mu_1,\ldots,\mu_s$ (eventually with length 1) which act on the sets $K_1,\ldots,K_s$, respectively, and these sets form a partition of [r].

Given $\underline{\hat{a}}\in\hat{\Pi}_t^{-1}(x)$, we can build $t$ distinct pre-images of $x$ under $\Pi$, with $t\leq r$. Let $J\subseteq [r]$ be such that these pre-images are $(\underline{\alpha}^j)_{j\in J}$. If we suppose additionally that $\underline{\hat{a}}\in Per_p(\sigma_r^+)$, then $J$ is invariant under $\nu$, so we can write $J=\bigcup_{m\in B}K_m$ for some $\emptyset\neq B\subseteq [s]$. On the other hand, for each nonempty subset $B$ of $[s]$, we can take $J=\bigcup_{m\in B}K_m$ and associate to it a sequence $\underline{\hat{a}}$ given by the set of distinct pre-images $(\underline{\alpha}^j)_{j\in J}$. So, for each $t\in[L]$ and $\underline{\hat{a}}\,\in\,\hat{\Pi}_t^{-1}(x)\bigcap Per_p(\sigma_r^+)$, we can associate a unique nonempty subset $B$ of $[s]$, and we have
\[
t=card(J)=card\left(\bigcup_{m\in B}K_m\right)=\sum_{m\in B}card(K_m).
\]
Since $\mu_m$ is a cycle of length $card(K_m)$, we have
\[
sgn(\nu)=\prod_{m\in B}sgn(\mu_m)=\prod_{m\in B}(-1)^{card(K_m)+1}=(-1)^{t+card(B)}.
\]
Hence,
\[
(-1)^{t-1} sgn(\nu)=(-1)^{2t-1+card(B)}=-(-1)^{card(B)}
\]
and
\begin{eqnarray*}
\Phi(x)&=&\sum_{t=1}^L\left(\sum_{\underline{\hat{a}}\,\in\,\hat{\Pi}_t^{-1}(x)\bigcap Per_p(\sigma_r^+)} (-1)^{t-1} sgn(\nu)\right)\\
&=& -\sum_{\emptyset\,\neq \, B\subseteq[s]}(-1)^{card(B)}=-\sum_{q=1}^s\sum_{B\,\subseteq\,[s],\,card(B)=q}(-1)^{card(B)} \\
&=& -\sum_{q=1}^s {s \choose q}(-1)^q={s \choose 0}(-1)^0-\sum_{q=0}^s {s \choose q}(-1)^q \\
&=& 1-(1-1)^s=1.
\end{eqnarray*}
The inclusion $Per_p(\sigma_r^+)\subseteq \hat{\Pi}_t^{-1}(Per_p(f))$ now yields
\begin{eqnarray*}
N_p(f) &=& \sum_{x\in Per_p(f)}\Phi(x) \\
&=& \sum_{x\,\in \, Per_p(f)}\sum_{t=1}^L\left( \sum_{\underline{\hat{a}}\,\in\,\hat{\Pi}_t^{-1}(x)\bigcap Per_p(\sigma_r^+)}(-1)^{t-1} sgn(\nu)\right) \\
&=& \sum_{t=1}^L\left(\sum_{\underline{\hat{a}}\,\in \, Per_p(\sigma_r^+)}(-1)^{t-1} sgn(\nu)\right) \\
&=& \sum_{t=1}^L (-1)^{t-1}\left(\sum_{\underline{\hat{a}}\, \in \, Per_p(\sigma_r^+)} sgn(\nu)\right).
\end{eqnarray*}
Concerning the last summand, let $(\hat{a}_0,...,\hat{a}_n)$ be an admissible sequence of length $n+1$ for the matrix $A^{(t)}$ and let $\mu_m$ be the permutation which ensures that $A^{(t)}_{\hat{a}_m \hat{a}_{m+1}}=1$, for $m\in\left\{0,1,...,n-1\right\}$. Then we have $B^{(t)}_{\hat{a}_m \hat{a}_{m+1}}=sgn(\mu_m)$. Consider the permutations $\nu_m$ given by
\begin{center}
$\nu_0=id$

$\nu_m=\mu_{m-1}\circ...\circ\mu_0$
\end{center}
so that $\nu_{m+1}=\mu_m\circ\nu_m$ for $m\in\left\{0,1,...,n-1\right\}$. Let $S(\hat{a}_0,\hat{a}_n,n)$ denote the set of admissible sequences of length $n+1$ which start at $\hat{a}_0$ and end at $\hat{a}_n$. Then we can show by induction over $n$ that
\[
\sum_{S(\hat{a}_0,\hat{a}_n,n)} sgn(\nu_n)=((B^{(t)})^n)_{\hat{a}_0 \hat{a}_n}.
\]
For $n=1$, given two elements $\hat{a}_0,\hat{a}_1\in I_t$ we have $\nu_1=\mu_0$, so
\[
sgn(\nu_1)=sgn(\mu_0)=(B^{(t)})_{\hat{a}_0 \hat{a}_1}.
\]
Suppose the assertion is true for $n=m-1$. Then, for $n=m$,
\begin{eqnarray*}
\sum_{S(\hat{a}_0,\hat{a}_m,m)} sgn(\nu_m) &=& \sum_{S(\hat{a}_0,\hat{a}_m,m)} sgn(\mu_{m-1})sgn(\nu_{m-1}) \\
&=& \sum_{\left\{\hat{a}_{m-1}\in I_r:A^{(t)}_{\hat{a}_{m-1}\hat{a}_m}=1\right\}}\left(\sum_{S(\hat{a}_0,\hat{a}_{m-1},m-1)} sgn(\nu_{m-1})\right)sgn(\mu_{m-1}) \\
&=& \sum_{\left\{\hat{a}_{m-1}\in I_r:A^{(t)}_{\hat{a}_{m-1}\hat{a}_m}=1\right\}}((B^{(t)})^{m-1})_{\hat{a}_0 \hat{a}_{m-1}}B^{(t)}_{\hat{a}_{m-1} \hat{a}_m} \\
&=& ((B^{(t)})^m)_{\hat{a}_0 \hat{a}_m}.
\end{eqnarray*}
In particular,
\[
\sum_{S(\hat{a}_0,\hat{a}_0,n)} sgn(\nu_n)=((B^{(t)})^n)_{\hat{a}_0 \hat{a}_0}
\]
For each sequence $\underline{\hat{a}}\in Per_p(\sigma_r^+)$ we can associate a unique element of $S(\hat{a}_0,\hat{a}_0,p)$ which verifies $\nu_p=\nu$. So
\[
\sum_{\underline{\hat{a}}\in Per_p(\sigma_r^+)} sgn(\nu)=
\sum_{\hat{a}_0\in I_t}((B^{(t)})^p)_{\hat{a}_0 \hat{a}_0}=
\tr((B^{(t)})^p).
\]
Then we finally conclude that
\[
N_p(f)=\sum_{t=1}^L (-1)^{t-1}\tr((B^{(t)})^p).
\]
\end{proof}

\begin{theo}\label{M23}
The $\zeta$-function of $f$ is rational.
\end{theo}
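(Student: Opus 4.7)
The plan is to turn the periodic-point formula of Proposition~\ref{M22} into an identity for $\zeta_f$ by recycling the computation already carried out for subshifts of finite type in Proposition~\ref{M4}. That proof only used the elementary identity
\[
\exp\!\left(\sum_{n=1}^\infty \frac{\tr(M^n)}{n}\,z^n\right)=\frac{1}{\det(I-zM)}
\]
which holds for \emph{any} square matrix $M$ with complex entries (just diagonalise, or use Jordan form, over $\CC$: the eigenvalues of $M^n$ are the $n$-th powers of those of $M$, so the series sums to $-\sum_m \log(1-\lambda_m z)$). Since the matrices $B^{(r)}$ have entries in $\{-1,0,1\}$, this identity applies to each of them and yields a rational function of $z$.

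First I would substitute the formula of Proposition~\ref{M22} into the definition of $\zeta_f$, obtaining
\[
\zeta_f(z)=\exp\!\left(\sum_{n=1}^\infty \frac{1}{n}\Bigl(\sum_{r=1}^L (-1)^{r-1}\tr\bigl((B^{(r)})^n\bigr)\Bigr)z^n\right).
\]
Because the outer sum over $r$ is finite, I can interchange it with the inner sum (which converges absolutely on a neighbourhood of $0$, as $|\tr((B^{(r)})^n)|$ grows at most exponentially in $n$), and then split the exponential of a finite sum into a finite product:
\[
\zeta_f(z)=\prod_{r=1}^L \exp\!\left((-1)^{r-1}\sum_{n=1}^\infty \frac{\tr((B^{(r)})^n)}{n}\,z^n\right)=\prod_{r=1}^L \det\bigl(I-zB^{(r)}\bigr)^{(-1)^r}.
\]
Each factor is a polynomial (or reciprocal of a polynomial) in $z$ with integer coefficients, so their product is rational. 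This is an identity of formal power series, and as both sides are analytic on a common disc around $0$, it extends to equality as meromorphic functions, which is exactly the statement that $\zeta_f$ is rational.

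The only step that needs care, and which I expect to be the main (minor) obstacle, is the legitimacy of the rearrangement of sums and of factoring the exponential: this is purely a matter of checking absolute convergence on a small enough disc $|z|<\rho_0$, using Corollary~\ref{LOGK} together with the bound $|\tr((B^{(r)})^n)|\leq\binom{k}{r}\,\|B^{(r)}\|_{\max}^n$ coming from the fact that $B^{(r)}$ is a matrix of size $\binom{k}{r}$ with entries in $\{-1,0,1\}$. Once that is done, equality of the rational function $\prod_r \det(I-zB^{(r)})^{(-1)^r}$ with $\zeta_f$ on this disc, together with the usual identity principle for meromorphic functions, gives the theorem. No additional geometry is required beyond what is already encoded in Proposition~\ref{M22}.
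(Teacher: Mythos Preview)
Your proposal is correct and follows essentially the same route as the paper: substitute the formula of Proposition~\ref{M22} into the definition of $\zeta_f$, split the exponential of the finite alternating sum into a product, and apply the identity $\exp\bigl(\sum_n \tr(M^n)z^n/n\bigr)=\det(I-zM)^{-1}$ to each factor, arriving at $\zeta_f(z)=\prod_{r=1}^L \det(I-zB^{(r)})^{(-1)^r}$. The paper presents the same computation by separating odd and even $r$ explicitly, and is slightly less explicit than you are about the convergence bookkeeping.
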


\begin{proof}
We have seen that, for any $n$,
\[
N_n(f)=\sum_{r=1}^L(-1)^{r-1}\tr((B^{(r)})^n)
=\sum_{r\in[L],\,r\,odd}\tr((B^{(r)})^n)-\sum_{r\in[L],\,r\,even}\tr((B^{(r)})^n).
\]
So
\begin{eqnarray*}
\zeta_f(z) &=& \exp\left(\sum_{n=1}^\infty\frac{\sum_{r\in[L],\,r\,odd}\tr((B^{(r)})^n)-\sum_{r\in[L],\,r\,even}\tr((B^{(r)})^n)}{n}z^n\right) \\
&=& \frac{\exp\left(\sum_{n=1}^\infty\frac{\sum_{r\in[L],\,r\,odd}\tr((B^{(r)})^n)}{n}z^n\right)}{\exp\left(\sum_{n=1}^\infty\frac{\sum_{r\in[L],\,r\,even}\tr((B^{(r)})^n)}{n}z^n\right)}\\
&=& \frac{\prod_{r\in[L],\,r\,odd}\exp\left(\sum_{n=1}^\infty\frac{\tr((B^{(r)})^n)}{n}z^n\right)}{\prod_{r\in[L],\,r\,even}\exp\left(\sum_{n=1}^\infty\frac{\tr((B^{(r)})^n)}{n}z^n\right)} \\
&=& \frac{\prod_{r\in[L],\,r\,odd}\frac{1}{\det(I-zB^{(r)})}}{\prod_{r\in[L],\,r\,even}\frac{1}{\det(I-zB^{(r)})}} \\
&=& \frac{\prod_{r\in[L],\,r\,even}\det(I-zB^{(r)})}{\prod_{r\in[L],\,r\,odd}\det(I-zB^{(r)})}
\end{eqnarray*}
which is clearly a rational function. It is also interesting to notice that the zeta function's coefficients are integer numbers.
\end{proof}

\noindent \textbf{Question}: When $f$ is a subshift of finite type associated to an irreducible matrix $A$, then $A^{1}=A$ and the radius of convergence of $\zeta_f$ is equal to $\log\lambda$, where $\lambda$ is the simple eigenvalue given by Perron-Froebenius' Theorem. What may be said in the general case? Do the matrices $A^{r}$ and $B^{r}$ yield some information of the same kind?

\section{Proof of Theorem~\ref{MainTheorem2}}\label{Entropy}

We are now assuming that $f: K \rightarrow K$ has the properties assigned to one basic component $\Lambda_i^{(m)}$, that is,
\begin{itemize}
\item[(C1)] $f(K)=K$.
\item[(C3)] $K=\overline{Per(f)}$.
\item[(C4)] $f$ is Ruelle-expanding.
\item[(C5)] For any open nonempty subset $V$ of $K$ there is $N\in \mathbb{N}$ such that $f^N(V)=K$.
\end{itemize}

From Corollary~\ref{PerEntr}, one already knows that $\wp(f)\leq h(f)$. To get the other inequality, it is enough to prove the following estimate.

\begin{prop}\label{PERSEP}
Let $\vep$ be a constant of expansivity of $f$ and $\vep_0<\vep/4$. Then there exists a constant $C>0$ and a positive integer $n_0$ such that, for all $n \geq n_0$, we have $N_n(f)\geq C\, s_n(\vep_0, K)$.
\end{prop}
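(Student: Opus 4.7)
The plan is a classical shadowing/specification argument that embeds a maximal $(n,\vep_0)$-separated set into the set of periodic points of $f$.

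First, I would extract a uniform transitivity time from the exactness hypothesis (C5). Choose $\beta$ with $0<\beta<\min\{\vep_0/3,\vep/4,r\}$ and let $\alpha=\alpha_\beta$ be the constant furnished by the Shadowing Lemma (Lemma~\ref{M13}). Cover $K$ by finitely many balls $B_{\alpha/2}(y_1),\ldots,B_{\alpha/2}(y_M)$; by (C5) each satisfies $f^{N_i}(B_{\alpha/2}(y_i))=K$, and this extends to all $N\geq N_i$ by the surjectivity $f(K)=K$ of (C1). Setting $N_0=\max_i N_i$, one obtains the following specification-style property: for every $x,y\in K$ there exists $w\in B_\alpha(x)$ with $f^{N_0}(w)=y$ (indeed $x$ lies in some $B_{\alpha/2}(y_i)\subseteq B_\alpha(x)$).

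Next I would fix a maximal $(n,\vep_0)$-separated set $E\subseteq K$ of cardinality $s_n(\vep_0,K)$ and, for each $x\in E$, construct a periodic $\alpha$-pseudo-orbit $\underline{y}^x$ of period $p:=n+N_0$ which follows the true orbit of $x$ on the first $n$ steps and then closes the loop in $N_0$ steps: set $y^x_i=f^i(x)$ for $i=0,\ldots,n-1$, pick $w^x\in B_\alpha(f^n(x))$ with $f^{N_0}(w^x)=x$, set $y^x_{n+j}=f^j(w^x)$ for $j=0,\ldots,N_0-1$, and extend by periodicity. Lemma~\ref{M13} then yields a unique $\beta$-shadow $p_x$, and periodicity of the pseudo-orbit combined with uniqueness of the shadow (exactly as in Corollary~\ref{PER}) forces $f^p(p_x)=p_x$. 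Injectivity of $x\mapsto p_x$ is immediate from $\beta<\vep_0/3$: if $p_x=p_{x'}$ with $x\neq x'$, then for every $j\in\{0,\ldots,n-1\}$ one has $y^x_j=f^j(x)$, $y^{x'}_j=f^j(x')$, and the triangle inequality gives $d(f^j(x),f^j(x'))\leq 2\beta<\vep_0$, contradicting $d_n(x,x')>\vep_0$. Hence
\[
N_{n+N_0}(f)\,\geq\,|E|\,=\,s_n(\vep_0,K).
\]

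The main obstacle I expect is precisely the period mismatch: the shadowing argument produces points of period $n+N_0$ rather than $n$, so directly one only obtains $N_n(f)\geq s_{n-N_0}(\vep_0,K)$ for $n\geq n_0:=N_0+1$. To recover the stated form $N_n(f)\geq C\,s_n(\vep_0,K)$ the bounded shift $N_0$ must be absorbed into the constant $C$, either via a comparison estimate between $s_n$ and $s_{n-N_0}$ or by observing that $N_0$ is fixed independently of $n$. In any case, the $\tfrac{1}{n}$-normalization renders the shift asymptotically invisible, so the estimate suffices to conclude $\wp(f)\geq h(f)$ through
\[
\wp(f)\,=\,\limsup_{n}\tfrac{1}{n}\log N_n(f)\,\geq\,\limsup_{m}\tfrac{1}{m+N_0}\log s_m(\vep_0,K)\,=\,h(f),
\]
which, combined with Corollary~\ref{PerEntr}, proves Theorem~\ref{MainTheorem2}.
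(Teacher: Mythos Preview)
Your plan is essentially the paper's own argument: extract a uniform mixing time from (C5), shadow a closed pseudo-orbit to produce a periodic point of period $n+N_0$ that $\beta$-tracks each point of a maximal $(n,\vep_0)$-separated set, and conclude by injectivity that $N_{n+N_0}(f)\geq s_n(\vep_0,K)$, i.e.\ $N_n(f)\geq s_{n-N_0}(\vep_0,K)$. The paper constructs the periodic point via a contractive branch rather than an explicit pseudo-orbit, but the content is the same.

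The one place where your proposal remains a sketch is exactly the one you flag: absorbing the shift $N_0$ into a constant $C$ so that the literal inequality $N_n(f)\geq C\,s_n(\vep_0,K)$ holds. The paper handles this with an explicit submultiplicativity lemma,
\[
s_{n_1+n_2}(\gamma,K)\leq s_{n_1}(\gamma/2,K)\,s_{n_2}(\gamma/2,K),
\]
which yields $s_{n-N_0}(4\tau,K)\geq s_n(8\tau,K)/s_{N_0}(4\tau,K)$ and hence $N_n(f)\geq C\,s_n(\vep_0,K)$ with $C=1/s_{N_0}(4\tau,K)$. Note that for this chain to land at scale $\vep_0$ one must start the shadowing at a scale $\tau<\vep_0/8$, not $\vep_0/3$ as in your proposal; your parameter choice gives $N_n(f)\geq s_{n-N_0}(\vep_0,K)$, and the submultiplicativity applied with $\gamma=\vep_0$ would only recover $s_n(2\vep_0,K)$ rather than $s_n(\vep_0,K)$. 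With that adjustment the argument goes through. Your fallback observation that the shift is asymptotically invisible is correct and already suffices for $\wp(f)\geq h(f)$, but not for the Proposition as stated.
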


\begin{proof}

\begin{lem}\label{mixing}
Given $\delta>0$ there is $N_\delta \in \mathbb{N}$ such that, for all $m \geq N_\delta$ and any $x \in K$, we have $f^m(B_\delta(x))=K$.
\end{lem}

\begin{proof}
As $K$ is compact, we may choose a finite set of points $p_1, p_2,\ldots,p_\ell$ such that every $x \in K$ is within a distance smaller than $\displaystyle \frac{\delta}{2}$ to some $p_j$. By condition (C5), there are positive integers $N_1, N_2, \ldots, N_\ell$ such that $f^{m}(B_{\frac{\delta}{2}}(p_i))=K$, for any $1\leq i\leq \ell$ and all $m \geq N_i$. Take $N_\delta =\max \,\{N_1,\ldots,N_\ell\}$. Then, as $B_\delta(x)\supseteq B_{\frac{\delta}{2}}(p_j)$, we have $f^{N_\delta}(B_\delta(x))\supseteq f^{N_\delta}(B_{\frac{\delta}{2}}(p_j))\supseteq K.$ Thus the same holds for all $m \geq N_\delta$.
\end{proof}

Consider any $0< \tau <\displaystyle \frac{\vep_0}{8}$ and take $\delta =\frac{1}{2} \,\min \,\{\tau, \alpha_\tau\}$ (the value $\alpha$ is obtained in Corollary~\ref{PER}). Fix $x \in K$ and the dynamical ball
$$B(n-N_\delta, \delta, x)=\displaystyle \left\{y\in K : d(f^j(x),f^j(y))< \delta,\,\,\forall j\in\{0,\ldots,n-N_\delta\}\right\}.$$

\begin{lem}\label{EXISTPER}
$Per_n(f)\, \cap \, B(n-N_\delta, 2\tau, x) \neq \emptyset$ for all $n \geq N_\delta+1$.
\end{lem}

\begin{proof}
Take a contractive branch $g:B_\delta(f^{n-N_\delta}(x)) \longrightarrow K$ of $f^{n-N_\delta}$ such that $g(f^{n-N_\delta}(x))=x$. By Lemma~\ref{mixing} we know that $f^{N_\delta} \left(B_\delta(f^{n-N_\delta}(x))\right)=K$, and so, as $(f^{n-N_\delta} \circ g)(y)=y$ for all $y \in B_\delta(f^{n-N_\delta}(x))$, we get
$$f^n(g(B_\delta(f^{n-N_\delta}(x))))=f^{N_\delta}(f^{n-N_\delta}(g(B_\delta(f^{n-N_\delta}(x)))))=f^{N_\delta}(B_\delta(f^{n-N_\delta}(x)))=K.$$
Moreover, by Proposition~\ref{M8}, $g(B_\delta(f^{n-N_\delta}(x)))=B(n-N_\delta, \delta, x)$, and so we may find $z \in B(n-N_\delta, \delta, x)$ such that $f^n(z) \in B(n-N_\delta, \delta, x)$. As $\delta < \alpha_\tau$, Corollary~\ref{PER} yields a point $w$ such that $f^{n}(w)=w$ and $d(f^j(w),f^j(z))<\tau$ for all $0\leq j \leq n$. Therefore, for $0\leq j \leq n-N_\delta$, we have
$$d(f^j(x),f^j(w))\leq d(f^j(x),f^j(z))+ d(f^j(z),f^j(w)) < \delta + \tau < 2\tau.$$
\end{proof}

\begin{cor}\label{Final}
$N_n(f)\geq s_{n-N_\delta}(4\tau, K)$ for all $n \geq N_\delta+1$.
\end{cor}

\begin{lem}\label{sum}
Fix two positive integers $n_1, n_2$ and $\gamma >0$. Then
$$\displaystyle s_{n_1+n_2}\,(\gamma, K)\leq s_{n_1}\left(\frac{\gamma}{2},K\right) s_{n_2}\left(\frac{\gamma}{2},K\right).$$
\end{lem}

\begin{proof}
Suppose that $E\subseteq K$ is such that, for any $x, y \in E$, $x\neq y$, there is $t \in \,[0, n_1+n_2[$ for which $d(f^t(x),f^t(y))> \gamma$. Take $S_1\subseteq K$ a maximal $(n_1,\frac{\gamma}{2})$-separated set and $S_2 \subseteq K$ such that $f^{n_1}(S_2)$ is a maximal $(n_2,\frac{\gamma}{2})$-separated set. To construct $S_2$, consider a maxi\-mal $(n_2,\frac{\gamma}{2})$-separated set $T_2=\{d_1, \ldots, d_M\}$ and define $S_2=\{c_1, \ldots, c_M\}$ such that $c_j\in f^{-n_1}(\{d_j\})$, for each $j \in \{1, \ldots, M\}$, which is possible since $f(K)=K$. \\

For each $i \in \{1,2\}$, define the maps $\psi_i: E \rightarrow S_i$ by the conditions:
\begin{itemize}
\item For any $x \in E$ and all $t \in \,[0,n_1[$, $d(f^t(x),f^t(\psi_1(x)))\leq \frac{\gamma}{2}$.
\item For any $x \in E$ and all $t \in \,[n_1,n_1+n_2[$, $d(f^t(x),f^t(\psi_2(x)))\leq \frac{\gamma}{2}$.
\end{itemize}
Such a $\psi_i(x)$ may be found in $S_i$, otherwise either $x$ would not belong to $S_1$, and so the set $S_1 \, \cup \, \{x\}$ would be $(n_1,\frac{\gamma}{2})$-separated, contradicting the maximality of $S_1$; or $x$ would not belong to $S_2$, and therefore the set $f^{n_1}(S_2 \, \cup \, \{x\})$ would be $(n_2,\frac{\gamma}{2})$-separated, contradicting the maximality of $f^{n_1}(S_2)$. Moreover, the map
\begin{eqnarray*}
\psi:& E & \rightarrow S_1 \times S_2 \\
& x & \mapsto (\psi_1(x), \psi_2(x))
\end{eqnarray*}

\noindent is injective because, given $x, y \in E$ with $\psi(x)=\psi(y)$, then, for all $t \in \,[0,n_1[$,
\begin{eqnarray*}
d(f^t(x), f^t(y)&\leq& d(f^t(x),f^t(\psi_1(x)) + d(f^t(\psi_1(x), f^t(y)) \\
&=&d(f^t(x),f^t(\psi_1(x)) + d(f^t(\psi_1(y), f^t(y)) \leq \gamma
\end{eqnarray*}
and, for all $t \in \,[n_1, n_1 + n_2[$,
$$d(f^t(x), f^t(y)\leq d(f^t(x),f^t(\psi_2(x)) + d(f^t(\psi_2(x), f^t(y)) \leq \gamma$$
which contradicts the definition of $E$ if $x \neq y$.
\end{proof}

\begin{cor}
$N_n(f)\geq \,\frac{s_{n}(8\tau, K)}{s_{N_\delta}(4\tau, K)}$ for all $n \geq N_\delta+1$
\end{cor}

\begin{proof} Starting with Corollary~\ref{Final} and applying Lemma~\ref{sum} to $n_1=n-N_\delta$ and $n_2=N_\delta$, we get
$$N_n(f)\geq s_{n-N_\delta}(4\tau, K) \geq \frac{s_{n}(8\tau, K)}{s_{N_\delta}(4\tau, K)}.$$
\end{proof}

Let $c$ denote the factor $\frac{1}{s_{N_\delta}(4\tau, K)}$. As $s_m(\vep, K) \leq s_m(\gamma, K)$, for all $m$ and all $\gamma < \vep$, and $8\tau < \vep_0$, we finally reason that, for $n \geq N_\delta+1$,
$$N_n(f)\geq c\,{s_{n}(\vep_0, K)}.$$
\end{proof}

From Propositions~\ref{M10} and \ref{PERSEP}, we deduce that
\begin{eqnarray*}
\wp(f)&=&\limsup_{n\rightarrow\infty} \, \frac{1}{n}\log(\max\{N_n(f),1\}) \\
&\geq & \limsup_{n \rightarrow + \infty} \, \frac{1}{n}\, \left[\log (c) + \log s_n(\vep_0, K)\right] \\
&=&\lim_{n \rightarrow + \infty} \, \frac{1}{n} \, \log s_n(\vep_0, K) = h(f).
\end{eqnarray*}
Thus $\wp(f)=h(f)$.

\ms

Moreover, if $\mathcal{B}$ and $\mathcal{C}$ are the covers of $K$ by open balls of radius $2\vep_0$ and $\frac{\vep_0}{2}$, respectively, then, from Propositions \ref{TOPENTR1and2} and \ref{PERSEP}, we get, for all $n\geq N_\delta+1$,
$$c\,H(\bigvee_{i=0}^{n-1}\,f^{-i}\, \mathcal{B}) \leq  c\,s_n(\vep_0, K) \leq N_n(f) \leq s_n(\vep_0, K) \leq H(\bigvee_{i=0}^{n-1}\,f^{-i}\, \mathcal{C})$$
and so, as, by Proposition~\ref{M10},
$$h(f)= h(f, \mathcal{B})=h(f, \mathcal{C}),$$
the limit
$$\lim_{n \rightarrow + \infty}\,\frac{1}{n}\,\,\log N_n(f)$$
exists and is equal to $h(f)$.
$\square$

\bigskip

\section{Bibliography}


\begin{thebibliography}{00}

\bibitem{AM} M. Artin, B. Mazur, \emph{On periodic points}, Annals of Mathematics 81, 82-99, 1965.

\bibitem{BL} R. Bowen, O. E. Lanford III, \emph{Zeta functions of restrictions of the shift transformation}, Proceedings of Symposia in Pure Mathematics, Vol XIV, 43-49, American Mathematical Society, 1970.

\bibitem{Bo2} R. Bowen, \emph{Periodic points and measures for Axiom A diffeomorphisms}, Transactions of the American Mathematical Society, Vol 154, 377-397, 1971.

\bibitem{Bo} R. Bowen, \emph{Equilibrium states and the ergodic theory of Anosov diffeomorphisms}, Lecture Notes in Mathematics, 470, Springer, 1975.

\bibitem{BRW} M. Baake, J. Roberts, A. Weiss, \emph{Periodic orbits of linear endomorphisms on the 2-torus and its lattices}, Nonlinearity 21, 2427-2446, 2008.

\bibitem{Cra} M. Craizer, \emph{Teoria erg\'odica das transforma\c c\~oes expansoras}, Informes de Matem\'atica, S\'erie E-018-Agosto/85, IMPA (Brasil).

\bibitem{ES} D. Epstein, M. Shub, \emph{Expanding endomorphisms of flat manifolds}, Topology 7, 139-141, 1968.



\bibitem{H2} F. Hofbauer, \emph{On intrinsic ergodicity of piecewise monotonic transformations with positive entropy II}, Israel Journal of Mathematics 38, 107-115, 1981.

\bibitem{JL} B. Jiang, J. Llibre, \emph{Minimal sets of periods for torus maps}, Discrete and Continuous Dynamical Systems 4, 301-320, 1998.

\bibitem{M} A. Manning, \emph{Axiom A diffeomorphisms have rational zeta functions}, Bulletin of the London Mathematical Society 3, 215, 1971.

\bibitem{MT} J. Milnor, W. Thurston, \emph{On iterated maps of the interval}, Lecture Notes in Mathematics 1342, Springer, 465-563, 1988.

\bibitem{MS} M. Misiurewicz, W. Szlenk, \emph{Entropy of piecewise monotone mappings}, Studia Mathematica LXVII, 45--63, 1980.

\bibitem{N} H. E. Nusse, \emph{Chaotic maps with rational zeta function}, Transactions of the American Mathematical Society 304, 2, 705-719, 1987.

\bibitem{Ru1} D. Ruelle, \emph{Statistical mechanics of a one dimensional lattice gas}, Communications in Mathematical Physics, 267-278, 1968.

\bibitem{Ru3} D. Ruelle, \emph{Zeta-functions for expanding maps and Anosov flows}, Invent. Math. 34, N.3, 231–242, 1976.

\bibitem{Ru2} D. Ruelle, \emph{Thermodynamic formalism}, Addison-Wesley, 1978.


\bibitem{Shu3} M. Shub, \emph{Endomorphisms of compact differentiable manifolds}, American Journal of Mathematics 91, 129-155, 1969.

\bibitem{Shu} M. Shub, \emph{Global stability of dynamical systems}, Springer, 1987.


\bibitem{Tau} R. Tauraso, \emph{Sets of periods for expanding maps on flat manifolds}, Monatshefte f\"{u}r Mathematik 128, n. 2, 151-157, 1999.

\bibitem{Wa} P. Walters, \emph{An introduction to ergodic theory}, Springer, 1975.

\bibitem{Wi} R. F. Williams, \emph{Zeta function in global analysis}, Proceedings of Symposia in Pure Mathematics, Volume XIV, 335-339, American Mathematical Society, 1970.

\end{thebibliography}
\end{document}